\crefname{equation}{}{}
\Crefname{equation}{}{}
\newtheoremstyle{mythmstyle}
  {8 pt} 
  {3 pt} 
  {} 
  {} 
  {\bfseries} 
  {.} 
  {.5em} 
  {} 
\theoremstyle{plain}
\def\thm@space@setup{%
  \thm@preskip=6pt plus 1pt minus 1pt
  \thm@postskip=\thm@preskip 
}
\newtheorem{theorem}{Theorem}[section]
\newtheorem{lemma}[theorem]{Lemma}
\newtheorem{corollary}[theorem]{Corollary}
\newtheorem{proposition}[theorem]{Proposition}
\newtheorem*{example*}{Example}
\newtheorem{definition}{Definition}
\newtheorem*{definition*}{Definition}
\newtheorem*{remark*}{Remark}
\crefname{definition}{\textbf{definition}}{definitions}
\Crefname{definition}{Definition}{Definitions}
\crefname{assumption}{\textbf{assumption}}{assumptions}
\Crefname{assumption}{Assumption}{Assumptions}
\newcommand{\krein}{{Kre\v{\i}n }}
\begin{document}
\allowdisplaybreaks
\title{Network Representation Using Graph Root Distributions}

 \author[1]{Jing Lei}
\affil[1]{Carnegie Mellon University}

\maketitle

\begin{abstract}
Exchangeable random graphs serve as an important probabilistic framework for the statistical analysis of network data.  In this work we develop an alternative parameterization for a large class of exchangeable random graphs, where the nodes are independent random vectors in a linear space equipped with an indefinite inner product, and the edge probability between two nodes equals the inner product of the corresponding node vectors. Therefore, the distribution of exchangeable random graphs in this subclass can be represented by a node sampling distribution on this linear space, which we call the \emph{graph root distribution}.  We study existence and identifiability of such representations, the topological relationship between the graph root distribution and the exchangeable random graph sampling distribution, and estimation of graph root distributions.

\end{abstract}

\section{Introduction}
In recent years network analysis has been the focus of many theoretical and applied research efforts in the scientific community, due to the increasing popularity of relational data.  Generally speaking, a network records the presence and absence of pairwise interactions among a group of individuals, and statistical network analysis aims at recovering properties of the underlying population of individuals from their pairwise interactions.  There is a vast literature on network analysis, and we refer to \citep{Kolaczyk09,Newman09,Goldenberg10} for more detailed review of this field from a statistical perspective.

Exchangeable random graphs \citep{Aldous81,Hoover82,Kallenberg89} are an important class of probabilistic models for network data.  The exchangeability requirement is quite natural: The individuals recorded in the network are somewhat like random sample points, and the data distribution remains unchanged under permutation of the nodes.  Many popularly studied network models are special cases of exchangeable random graphs, including the stochastic block model \citep{Holland83,BickelC09}, the degree-corrected block model \citep{KarrerN11}, the mixed-membership block model \citep{Airoldi08}, the random dot-product graph model \citep{Nickel08,Athreya17,Rubin17}, and random geometric graphs \citep{Penrose03}.

A central piece of the theoretical foundation of exchangeable random graphs is the celebrated Aldous-Hoover Theorem \citep{Aldous81,Hoover82,Kallenberg89}, which says that any exchangeable random graph of infinite size can be generated  by first sampling independent node variables $(s_i:i\ge 1)$ uniformly on $[0,1]$, and then connect each pair of nodes $(i,j)$ independently with probability $W(s_i,s_j)$, for some symmetric $W:[0,1]^2\mapsto [0,1]$.  The representation theory also says that two functions $W_1$, $W_2$ lead to the same distribution of exchangeable random graphs if and only if there exist measure-preserving mappings $h_1$, $h_2$, both from $[0,1]$ to $[0,1]$, such that $W_1(h_1(s),h_1(t))=W_2(h_2(s),h_2(t))$ almost everywhere over $(s,t)\in [0,1]^2$.  
Other random graph models have been proposed, such as exchangeable random measures \citep{CaronF17}, edge exchangeability \citep{CraneD18}, and ideas using a Bayesian framework \citep{OrbanzR15}. A notable difference is that these models can cover sparse networks while the Aldous-Hoover type exchangeable arrays must be dense.  Nevertheless, when a finite sample is concerned, one can add a sparsity parameter to generate a sparse network using the Aldous-Hoover type exchangeable array.  See \Cref{sec:sparse} for further discussion on sparse networks.  

The purpose of this work is to develop an alternative parametrization for a subclass of exchangeable random graphs and corresponding estimators. This new parameterization is based on a representation of nodes as independent random vectors in a separable \krein space $\mathcal K$, and the edge probability is the inner product of the two node vectors. A \krein space is isomorphic to a Hilbert space, but has an indefinite inner product.  With such a \krein space node embedding, we shift the information hidden in the function $W$ to the probability measure on $\mathcal K$, which can exhibit salient structures in a transparent and geometric manner.

We highlight a few key contributions.
\begin{enumerate}
  \item We provide a constructive proof for the correspondence between a subclass of exchangeable random graphs and probability distributions on the \krein space $\mathcal K$.  Our construction starts from viewing $W$ as an integral operator and considering its spectral decomposition. The variable $s\sim {\rm Unif}(0,1)$ is treated as the input variable in an infinite dimensional inverse transform sampling. The induced measure is thus invariant under measure-preserving transforms of $s$. The existence and identifiability of such a representation is established for a wide class of exchangeable random graphs. In our construction, it becomes apparent that the induced measure is closely related to the square root of the integral operator $W$, with appropriate treatment of negative eigenvalues.  Thus we call this induced measure the \emph{graph root distribution} (GRD).
  \item We show that the Wasserstein distance between two graph root distributions on $\mathcal K$ provides an upper bound of the cut-distance between sampling distributions of the corresponding exchangeable random graphs.  This result is further extended to a modified version of Wasserstein distance that is suitable for measuring the distance between two equivalence classes of graph root distributions.
  \item  We show that a truncated adjacency spectral embedding weighted by the square roots of the absolute eigenvalues can approximate the empirical distribution of the latent node vectors in $\mathcal K$ with vanishing Wasserstein distance error when the network size $n$ goes to infinity, under suitable regularity conditions.  This in turn implies that such a weighted truncated spectral embedding can also consistently estimate the underlying graph root distribution when the truncation dimension is chosen appropriately.
  \item As demonstrated in our numerical examples, the new parameterization allows for a simple, theoretically justifiable estimator, which can reveal salient geometric features in the network data. 
\end{enumerate}

\subsection*{Related work.}
The GRD parameterization is closely related to latent space network models.  The idea of modeling the edge probability between a pair of nodes by the inner product of the corresponding latent vectors is studied by \cite{hoff2008modeling} under the name of ``latent eigenmodel'', and further developed by \cite{Athreya17,Rubin17} under the name of ``random dot-product graph''.   The GRD framework extends this idea to a population perspective and connects it with the graphon literature.

The GRD parameterization and estimation are also related to spectral methods in random graphs. The spectral approach has been used in graphon estimation by \cite{Chatterjee14,Xu17,KloppV19}.  The GRD estimation method considered in \Cref{sec:estimation} uses a similar singular value thresholding of the adjacency matrix, but takes a further step of weighted spectral embedding to recover the latent vectors corresponding to each node, with the target parameter being a distribution in the latent \krein space.  Besides spectral methods, graphon estimation has also been studied using histogram and smoothing methods, including theoretical analysis \citep{Wolfe13,GaoLZ15,KloppTV17}, and practical algorithms \citep{Airoldi13,ChanA14,ZhangLZ17}. GRD estimation and graphon (or probability matrix) estimation are different, as the target parameters are in different spaces with different error metrics (see \Cref{sec:sparse,sec:data_2} for further discussion and comparison).  Some potential benefits of using GRD are discussed in \Cref{subsec:benefit}, and visualized in some simulated and real data examples in \Cref{sec:data}.   

\section{Background}
\subsection{Exchangeable random graphs and the graphon parameterization}\label{subsec:graphon}
Consider a random symmetric two-way binary array
$$
\mathbf A = (A_{ij}:~1\le i<j)
$$
such that $A_{ii}=0$ and satisfies the row-column joint exchangeability
$$
(A_{ij}:~i\ge 1,~j\ge 1) \stackrel{d}{=} (A_{\sigma(i)\sigma(j)}:~i\ge 1,~ j\ge 1)
$$
for all finite index permutation mapping $\sigma$: for some $1\le i_0 < j_0$,
$$
\sigma(i) = \left\{\begin{array}{ll}
  i & \text{if } i\notin \{i_0,j_0\},\\
  j_0 & \text{if } i = i_0,\\
  i_0 & \text{if } i = j_0\,.
\end{array}\right.
$$
Here ``$\stackrel{d}{=}$'' means that two random objects have the same distribution.

Analogous to the de Finetti theorem, the Aldous-Hoover theorem \citep{Aldous81,Hoover82,Kallenberg89} says that any symmetric exchangeable binary array $\mathbf A$ of infinite size can be generated by sampling independent $(s_i:i\ge 1)$ from ${\rm Unif}(0,1)$ (the uniform distribution on $[0,1]$), and sampling $A_{ij}$ independently from a Bernoulli distribution with probability $W(s_i,s_j)$ for a symmetric measurable function $W(\cdot,\cdot):[0,1]^2\mapsto [0,1]$.  Here $W$ is a random object measurable in the doubly-exchangeable $\sigma$-field.  For any given realization of $\mathbf A$, we can simply treat $W$ as a non-random parameter.


Once $W$ is given, the distribution of $\mathbf A$ is completely determined.  However, the converse is not true.  Let $h:[0,1]\mapsto [0,1]$ be a measure-preserving mapping in the sense that
$$
\mu(h^{-1}(B)) = \mu(B)\,,~~\forall~B\in\mathcal B_{[0,1]}\,,
$$
where $\mu(\cdot)$ denotes the Lebesgue measure and $\mathcal B_{[0,1]}$ is the Borel $\sigma$-field.  
%
Two functions $W_1$ and $W_2$ generate the same distribution of exchangeable arrays if and only if there exist two measure-preserving mappings $h_1,h_2$ such that
\begin{equation}\label{eq:weak_iso}
W_1(h_1(s),h_1(s'))=W_2(h_2(s),h_2(s'))\,,~{\rm a.e.}\,.\end{equation}
When \eqref{eq:weak_iso} holds, we say $W_1$ and $W_2$ are \emph{weakly isomorphic}, denoted as
$$
W_1\stackrel{w.i.}{=}W_2\,.
$$
The notion ``$\stackrel{w.i.}{=}$'' defines an equivalence relation on the space of all symmetric functions that map $[0,1]^2$ to $[0,1]$. We use $\tilde W$ to denote the equivalence class containing $W$.

When $W_1$ and $W_2$ are not weakly isomorphic, then they lead to different distributions of exchangeable random graphs. In this case, the sub-graph counts have different distributions under $W_1$ and $W_2$. Such a sampling distribution difference can be linked to the cut-distance, defined as
\begin{align*}
&\delta_{\square}(W_1,W_2)\\
=&\inf_{h_1,h_2}\sup_{S\times S':\subseteq[0,1]^2}\left|\int_{S\times S'}\left[W_1(h_1(s),h_1(s'))-W_2(h_2(s),h_2(s'))\right]dsds'\right|\,,
\end{align*}
where $h_1$, $h_2$ range over all measure-preserving mappings.
It can be shown that $W_1\stackrel{w.i.}{=}W_2$ if and only if $\delta_\square(W_1, W_2)=0$.
Therefore, the cut-distance $\delta_\square(\cdot,\cdot)$ can also be used to measure the distance between two equivalence classes $\tilde W_1$ and $\tilde W_2$.  
We will also adopt the terminology used in \cite{Lovasz12} to call the function $W$ a \emph{graphon} (abbreviation for ``graph function'').  In the rest of this paper we will often use a graphon $W$ to represent its equivalence class. 



\subsection{Graph root distributions}\label{sec:interpretation}
The focus of this paper is to develop an alternative characterization of a subclass of exchangeable random graphs.  The construction involves probability distributions on a separable \krein space, which we introduce first.

\begin{definition}[\krein space]\label{def:krein}
  A \krein space $\mathcal K=\mathcal H_+\ominus \mathcal H_-$ is the direct sum of two Hilbert spaces $\mathcal H_+$ and $\mathcal H_-$. For each $(x,y),~(x',y')\in \mathcal K$ with $x,x'\in\mathcal H_+$ and $y,y'\in \mathcal H_-$, the \krein inner product is
  \begin{align}
    \langle (x,y), (x',y') \rangle_{\mathcal K}=
    \langle x,x'\rangle_{\mathcal H_+} - \langle y,y' \rangle_{\mathcal H_-}\,.
  \end{align}
  The space $\mathcal K$ is also a linear normed space isomorphic to the Hilbert space $\mathcal H_+\oplus\mathcal H_-$ equipped with norm
  $$
  \|(x,y)\|_{\mathcal K}=(\|x\|_{\mathcal H_+}^2+\|y\|_{\mathcal H_-}^2)^{1/2}\,.
  $$
\end{definition}
The notation ``$\ominus$'' is used to emphasize the non-positive definite inner product associated with the space $\mathcal K$, and
$\mathcal H_+$, $\mathcal H_-$ represent the subspaces containing the positive and negative components, respectively.  This notation has been used in existing machine learning
 literature involving \krein spaces \citep{OngMCS04}.  The traditional notation ``$\oplus$'' is saved for the direct sum of Hilbert spaces in the usual sense, where the direct sum is still a Hilbert space with a positive definite inner product.

Now we define graph root distributions.
\begin{definition}[Graph root distribution (GRD)]
  We call a probability measure $F$ on $\mathcal K$ a \emph{graph root distribution} if for two independent samples  $Z_1$ and $Z_2$ from $F$
  $$\mathbb P(\langle Z_1,Z_2 \rangle_{\mathcal K}\in [0,1])=1\,.$$
\end{definition}
Let $F$ be a GRD on $\mathcal K$. We can generate an exchangeable random graph by first generating independent random vectors $(Z_i:i\ge 1)$ from $F$ and then generating $A_{ij}$ independently from a Bernoulli distribution with parameter $\langle Z_i, Z_j \rangle_{\mathcal K}$.
We call this sampling procedure the graph root sampling with $F$.  In contrast, the graphon based sampling scheme is called graphon sampling with $W$.

The embedding of network nodes in a \krein space $\mathcal K$ has a clear interpretation.  Suppose each node $i$ ($1\le i\le n$) corresponds to a $Z_i=(X_i,Y_i)\in\mathcal K$, with $X_i$, $Y_i$ being the positive and negative components, respectively.  Then two nodes $i$ and $j$ are more likely to connect if $\langle X_i,X_j \rangle$ is large, or equivalently, $\|X_i\| \|X_j\| \langle \tilde X_i,  \tilde X_j\rangle$ is large (where $\tilde X_i=X_i/\|X_i\|$).  The quantities $\|X_i\|$, $\|X_j\|$ measure how ``active'' the individuals $i$, $j$ are, respectively, while the normalized inner product $\langle \tilde X_i,  \tilde X_j\rangle$ measures how well the two individuals match each other.  Analogous interpretations can be given to the negative components $Y_i$, $Y_j$.

 Now we list how some commonly considered network models fit in the framework of GRD.  Further explanations of the correspondence are given in the Supplementary Material \citep{Supp}.  Simulations based on these models are reported in \Cref{sec:data_1}.
\paragraph{Stochastic block models: point mass mixture.}  A stochastic block model \citep[SBM,][]{Holland83} with $k$ blocks is parameterized by $(\pi,B)$, where $\pi$ is in the $(k-1)$-dimensional simplex and $B$ is a $k\times k$ symmetric matrix with entries in $[0,1]$. The exchangeable random graph is generated by sampling $(e_i:i\ge 1)$ independently from a multinomial distribution with parameter $\pi$, and connecting nodes $i,j$ independently with probability $B_{e_i,e_j}$. 
The corresponding GRD is a mixture of no more than $k$ point masses in a $k$-dimensional space $\mathcal K$, with the point mass locations determined by $B$ and the point mass weights determined by $\pi$.

For example, consider an SBM with $k=3$, $\pi=(1/3,1/3,1/3)$, and
\begin{equation}\label{eq:B}
  B=\left(\begin{array}{ccc}
    1/4 & 1/2 & 1/4\\
    1/2 & 1/4 & 1/4\\
    1/4 & 1/4 & 1/6
  \end{array}\right)\,,
\end{equation}
which corresponds to three blocks, but the rank is only $2$, with one positive eigenvalue and one negative eigenvalue.
Then a corresponding GRD is (after rounding)
\begin{align*}
F=&(1/3) \delta_{(0.61;0.35)} + (1/3) \delta_{(0.61;-0.35)} + (1/3) \delta_{(0.41;0)}\,,
\end{align*}
where $\delta_z=\delta_{(x;y)}$ denotes the point mass at $z=(x;y)$ and the semicolon is used to delineate positive and negative components.

\paragraph{Degree corrected block models: 1-D subspace mixture.} The degree-corrected block model \citep[DCBM,][]{KarrerN11} is parameterized by $(\pi,B,\Theta)$ where $\pi,B$ are the same as in SBM and $\Theta$ is a distribution on $(0,\infty)$.  The random graph is generated similarly as in SBM, except that it also generates $(\theta_i:i\ge 1)$ independently from $\Theta$ and the connection probability of nodes $i,j$ is $\theta_i\theta_j B_{e_i,e_j}$.  In this case, the corresponding GRD is a mixture of distributions, each supported on a line connecting one of the SBM point masses and the origin.

For example, if we use the same $k$, $\pi$ and $B$ as in the SBM example above, and set $\Theta$ to be the uniform distribution on $[0,1]$, then a GRD for this DCBM is
\begin{align*}
F=&(1/3) U(\mathbf{0},(0.61;0.35))+ (1/3)U(\mathbf{0},(0.61;-0.35)) + (1/3) U(\mathbf{0},(0.41;0))\,,
\end{align*}
where $U(z,z')$ denotes the uniform distribution on the line segment between $z$ and $z'$.  Other distributions $\Theta$ are allowed, and can be chosen differently for each mixture component.  This will lead to different ending points of line segments and the distributions on them.

\paragraph{Mixed membership block models: convex polytope.} The mixed membership block model \citep[MMBM,][]{Airoldi08} allows each node to have a mixture of memberships.  
Given a matrix $B$ as in the SBM, and a Dirichlet distribution ${\rm Dir}(a)$ with parameter $a\in (0,\infty)^k$, the random graph is generated by sampling $(g_i:i\ge 1)$ from ${\rm Dir}(a)$ independently, and connecting nodes $(i,j)$ with probability $g_i^T B g_j$.  In this case the corresponding GRD is a distribution supported on the convex polytope with extreme points given by the SBM point masses determined by $B$.

For example, using the same $B$ matrix as in the previous examples for SBM and DCBM, and choosing $a=(1,1,1)$ so that the Dirichlet distribution is uniform on the simplex, a GRD for this MMBM is
\begin{align*}
F= U((0.61;0.35),(0.61;-0.35),(0.41;0))\,,
\end{align*}
where $U(z_1,z_2,z_3)$ denotes the uniform distribution on the convex hull of $\{z_1,z_2,z_3\}$.

\paragraph{Random dot-product graphs: finite dimensional subspace.} The random dot-product graph \citep{Nickel08} and generalized random dot-product graph \citep{Rubin17} generate the random graph by connecting nodes $(i,j)$ independently with probability $\langle X_i, X_j \rangle$, where $(X_i:1\le i\le n)$ are node covariate vectors in an Euclidean space.  The original random dot-product graph only considers positive semidefinite inner products, while the generalized model allows for indefinite inner products in a similar fashion as we have defined for \krein spaces.  In principle, a generalized random dot-product graph can be viewed as a finite-sample realization of a GRD supported on a finite dimensional space.


\subsection{Potential features and benefits of the graph root parameterization}\label{subsec:benefit}
As we will see in the following section, GRDs can be used to parameterize exchangeable random graphs under some mild regularity conditions. 
Here we list some features and potential benefits of the GRD parameterization.

\paragraph{GRDs are identifiable up to orthogonal transforms.} Roughly speaking, two GRDs that differ by a pair of orthogonal transforms can lead to the same distribution of exchangeable random graphs.  The choice of orthogonal transforms used in GRD estimation is straightforward: It diagonalizes the covariance operator of the embedded node vectors. Moreover, many important geometric features of the distribution, such as clusters, pairwise distances, and the shape of support, are invariant under orthogonal transforms. In contrast, graphon estimators that do not attempt to recover the ordering of the nodes cannot reveal the same geometric structures in the data. This difference is illustrated in our numerical examples in \Cref{sec:data_1,sec:data_3,sec:data_4}.

\paragraph{GRD provides new tools for some inference problems.}
The embedding of network nodes as independent realizations of a common latent distribution makes it possible to apply the methods and tools developed for iid data to network related problems.
For example, suppose we observe an exchangeable random graph with $n$ nodes, where each node $i$ is associated with a covariate vector $U_i\in \mathbb R^d$. Here the nodes can be students in a school, edges represent friendship, and covariates are demographic informations.  A question of interest is to test whether the covariate and the network are independent.  In the GRD parameterization, the problem can be formulated as testing independence of two random vectors $(U_i, Z_i)$ in a paired sample, where $Z_i$ is the \krein space embedding of node $i$.  In a second example, suppose we have two exchangeable random graphs on two disjoint set of individuals. Here the two networks can be student friendship networks from different middle schools. Then one may want to test whether these two networks have the same distribution.  In GRD parameterization, this reduces to testing equality (up to orthogonal transform) of two distributions using independent samples.  Moreover, if the underlying GRDs are the same, it is even possible to aggregate two independently estimated GRDs, as well as to predict edge probabilities between nodes in different samples.

\paragraph{GRD provides connection between the graphon, spectral embedding, and latent space perspectives.} Spectral embedding of network vertices has been an active research topic related to exchangeable random graphs, especially stochastic block models \citep{McSherry01,Coja-Oghlan10,RoheCY11,Jin12,Lyzinski:13}. Examples of spectral embeddings beyond stochastic block models include the random dot product graph \citep{Athreya17,Rubin17} and the latent eigenmodel \citep{hoff2008modeling}. The GRD parameterization shows that such embeddings exist in an infinite dimensional space for all trace-class graphons, partially reconciling the graphon, spectral embedding, and latent space model literature.

\section{Existence, identifiability, and topology of GRD}\label{sec:grd}

From now on we only consider separable \krein spaces, where $\mathcal K=\mathcal H_+\ominus \mathcal H_-$, $\mathcal H_+=\mathcal H_-=\{x\in \mathbb R^\infty: \sum_j x_j^2< \infty\}$ with inner product $\langle x, x' \rangle_{\mathcal H_{\pm}}=\sum_{j\ge 1}x_j x_j'$. These spaces are associated with the Borel $\sigma$-field.

\subsection{Existence of GRD for exchangeable random graphs}\label{subsec:existence}
To find an underlying GRD for an exchangeable random graph, we consider the spectral decomposition of the corresponding graphon.  We will show that such GRDs exist for graphons whose spectral series converge in a strong sense.

Recall that a graphon $W$ is a symmetric function from $[0,1]^2$ to $[0,1]$.  We can view $W$ as an integral operator on $L^2([0,1])$:
$$
(Wf)(\cdot) = \int_{[0,1]} W(\cdot,s)f(s)ds\,,~~\forall~f\in L^2([0,1])\,.
$$
Since $\int_{[0,1]^2} W(s,s')^2 dsds'\le 1$, $W$ is a Hilbert-Schmidt operator and hence  admits a spectral decomposition
\begin{equation}\label{eq:spec_decomp_W}
  W(s,s') = \sum_{j=1}^\infty \lambda_j \phi_j(s)\phi_j(s')-\sum_{j=1}^\infty \gamma_j \psi_j(s)\psi_j(s')
\end{equation}
where $\lambda_1\ge\lambda_2\ge...> 0$, $\gamma_1\ge\gamma_2\ge...> 0$, and $(\phi_j:j\ge 1)\cup(\psi_j:j\ge 1)$ are orthonormal functions in $L^2([0,1])$.  The convergence in \eqref{eq:spec_decomp_W} shall be interpreted as $L^2$-convergence.  In general, almost everywhere convergence does not hold without further assumptions.

\begin{definition}[Strong spectral decomposition]
  We say a graphon $W$ admits \emph{strong spectral decomposition} if the eigen-components $(\lambda_j,\phi_j)_{j\ge 1}$, $(\gamma_j,\psi_j)_{j\ge 1}$ in \eqref{eq:spec_decomp_W} satisfy
  \begin{equation}\label{eq:ssd}
  \sum_{j\ge 1}\left[\lambda_j\phi_j^2(s)+\gamma_j\psi_j^2(s)\right]<\infty\,~~{\rm a.e.}\,.
  \end{equation}
\end{definition}
Strong spectral decomposition implies, among other things, that the sum in \eqref{eq:spec_decomp_W} converges almost everywhere.

The spectral decomposition of a graphon has been considered in the mathematical side of the literature, such as in \cite{Kallenberg89,BollobasJR07,Lovasz12}, and recently in graphon estimation in \cite{Xu17}.  Here we use the spectral decomposition to define a mapping from $[0,1]$ to a pair of infinite sequences: $[\sqrt{\lambda_j}\phi_j(s),~j\ge 1]$
and $[\sqrt{\gamma_j}\psi_j(s),~j\ge 1]$, and our key object, the graph root distribution (GRD), is the corresponding induced probability measure on the infinite dimensional space.  Such an induced probability measure carries all the information about the corresponding exchangeable random graph, and removes the ambiguity caused by measure preserving transforms.

\begin{theorem}[Graph root representation]\label{thm:krein_rep_exist}
  Any exchangeable random graph generated by a graphon that admits 
 strong spectral decomposition can be generated by a GRD on $\mathcal K$.
\end{theorem}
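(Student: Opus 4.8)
The plan is to construct the GRD explicitly as the push-forward of the uniform law on $[0,1]$ under the coordinate map read off from the spectral decomposition \eqref{eq:spec_decomp_W}. Define $T:[0,1]\to\mathcal K$ by
$$T(s) = \big((\sqrt{\lambda_j}\,\phi_j(s))_{j\ge 1},\ (\sqrt{\gamma_j}\,\psi_j(s))_{j\ge 1}\big)\,,$$
setting $T(s)=0$ on the null set where the two sequences fail to be square-summable, and take the candidate GRD to be $F$, the law of $T(s)$ when $s\sim{\rm Unif}(0,1)$. First I would check that $T$ lands in $\mathcal K$ almost everywhere: by definition of the \krein norm, $\|T(s)\|_{\mathcal K}^2 = \sum_{j}[\lambda_j\phi_j^2(s)+\gamma_j\psi_j^2(s)]$, which is finite a.e.\ precisely by the strong spectral decomposition assumption \eqref{eq:ssd}. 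Measurability of $T$ into $\mathcal K$ follows because each coordinate $s\mapsto\sqrt{\lambda_j}\,\phi_j(s)$ is measurable and, on the separable Hilbert spaces $\mathcal H_\pm$, the Borel $\sigma$-field coincides with the one generated by the coordinate projections; hence $F$ is a well-defined Borel probability measure on $\mathcal K$.

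The heart of the argument is to show that for two independent uniform draws $s,s'$ one has $\langle T(s),T(s')\rangle_{\mathcal K}=W(s,s')$ almost everywhere on $[0,1]^2$; this simultaneously verifies that $F$ is a GRD (the inner product then lies in $[0,1]$ a.e.) and that graph root sampling with $F$ reproduces graphon sampling with $W$. Writing $G_+(s,s')=\sum_j\lambda_j\phi_j(s)\phi_j(s')$ and $G_-(s,s')=\sum_j\gamma_j\psi_j(s)\psi_j(s')$, the Cauchy--Schwarz bound $\sum_j|\lambda_j\phi_j(s)\phi_j(s')|\le(\sum_j\lambda_j\phi_j^2(s))^{1/2}(\sum_j\lambda_j\phi_j^2(s'))^{1/2}$ together with \eqref{eq:ssd} shows these series converge absolutely for a.e.\ $(s,s')$, with tail dominated by $r_N(s)r_N(s')$ where $r_N(s)=(\sum_{j>N}\lambda_j\phi_j^2(s))^{1/2}\to 0$ a.e. Thus the partial sums $S_N^+$ of $G_+$ converge pointwise a.e.\ to $G_+$. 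On the other hand, since $W$ is Hilbert--Schmidt we have $\sum_j\lambda_j^2<\infty$, so $S_N^+$ converges in $L^2([0,1]^2)$ to the positive-part kernel $W_+$; passing to an a.e.-convergent subsequence identifies $G_+=W_+$ a.e. The same reasoning gives $G_-=W_-$, and because $W=W_+-W_-$ in $L^2$ after grouping the terms of \eqref{eq:spec_decomp_W}, we obtain $\langle T(s),T(s')\rangle_{\mathcal K}=G_+(s,s')-G_-(s,s')=W(s,s')$ a.e.

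The main obstacle is exactly this passage from the $L^2$-convergence guaranteed by \eqref{eq:spec_decomp_W} to genuine pointwise a.e.\ equality with $W$, which can fail for arbitrary orthonormal expansions and is what the strong spectral decomposition hypothesis is designed to supply. Granting the a.e.\ identity, the conclusion is routine: coupling $Z_i=T(s_i)$ with i.i.d.\ uniform $s_i$, the vectors $Z_i$ are i.i.d.\ from $F$, and for each of the countably many pairs $i<j$ the point $(s_i,s_j)$ lands with probability one in the full-measure set on which the inner product equals $W(s_i,s_j)$, so all the Bernoulli connection probabilities agree almost surely. Hence the adjacency array produced by graph root sampling with $F$ has the same distribution as the one generated by the graphon $W$, which proves \Cref{thm:krein_rep_exist}.
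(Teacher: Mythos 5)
Your proof is correct and takes essentially the same route as the paper's: both construct $F$ as the push-forward of ${\rm Unif}(0,1)$ under the coordinate map $s\mapsto\big((\sqrt{\lambda_j}\phi_j(s))_j,(\sqrt{\gamma_j}\psi_j(s))_j\big)$, use the strong spectral decomposition hypothesis \eqref{eq:ssd} to get a.e.\ membership in $\mathcal K$ and a.e.\ absolute convergence of the inner-product series, and conclude that graph root sampling from $F$ matches graphon sampling from $W$. The only difference is cosmetic: you justify the identification $\langle T(s),T(s')\rangle_{\mathcal K}=W(s,s')$ a.e.\ by passing from $L^2$-convergence to an a.e.-convergent subsequence (and use Cauchy--Schwarz rather than the paper's AM--GM bound), a step the paper compresses into ``by construction.''
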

\begin{proof}[Proof of \Cref{thm:krein_rep_exist}]
For a graphon $W$, consider its spectral decomposition \eqref{eq:spec_decomp_W}, and define $Z(s)=(X(s),Y(s))$ as
\begin{align}
  X_j(s) = & \lambda_j^{1/2}\phi_j(s)\,,~~\forall~j\ge 1\,,\nonumber\\
  Y_j(s) = & \gamma_j^{1/2}\psi_j(s)\,,~~\forall~j\ge 1\,.\label{eq:constr_Z}
\end{align}
If $s\sim {\rm Unif}(0,1)$, the resulting $Z(s)=(X(s),Y(s))$ is a random object.
By the strong spectral decomposition assumption, $\|X\|_{\mathcal H_+}$ and $\|Y\|_{\mathcal H_-}$ are finite with probability one, so $Z$ is a well-defined random vector in $\mathcal K$.  Moreover,
$$\langle Z(s), Z(s') \rangle_{\mathcal K} =\sum_j \left[\lambda_j\phi_j(s)\phi_j(s') - \gamma_j\psi_j(s)\psi_j(s')\right]$$
converges almost everywhere since for $s$, $s'$ we have
\begin{align*}
|\lambda_j\phi_j(s)\phi_j(s')|\le & \frac{1}{2}(\lambda_j\phi_j^2(s)+\lambda_j\phi_j^2(s'))\,,~~\\
|\gamma_j\psi_j(s)\psi_j(s')|\le & \frac{1}{2}(\gamma_j\psi_j^2(s)+\gamma_j\psi_j^2(s'))\,,
\end{align*}
and the summability is ensured for all $s$ and $s'$ satisfying \eqref{eq:ssd}.

Let $F$ be the probability measure induced by $Z(s):[0,1]\mapsto \mathcal K$ with $s\sim {\rm Unif}(0,1)$. By construction, $W(s,s')=\langle Z(s), Z(s') \rangle_{\mathcal K}$ almost everywhere, so that the graphon $W$ and GRD $F$ lead to the same sampling distribution of exchangeable random graph. 
\end{proof}
The examples given in \Cref{sec:interpretation} are special cases of \Cref{thm:krein_rep_exist}. We provide
more detailed explanation of the correspondence in the Supplementary Material \citep{Supp}.

\subsection{How stringent is strong spectral decomposition?}\label{subsec:ssd}  The requirement of strong spectral decomposition is, indeed, quite mild.  The following proposition states that trace-class integral operators admit strong spectral decomposition.

\begin{proposition}\label{pro:trace}
  If $W$ is trace-class, in the sense that $\sum_{j\ge 1}(\lambda_j+\gamma_j)<\infty$ with $\lambda_j,\gamma_j$ defined in \eqref{eq:spec_decomp_W}, then $W$ admits strong spectral decomposition, and the GRD constructed from the spectral decomposition of $W$ is square-integrable.
\end{proposition}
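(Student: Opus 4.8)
The plan is to observe that both conclusions follow from a single elementary computation: integrating the non-negative quantity appearing in the strong-spectral-decomposition condition against the uniform measure and interchanging the sum with the integral via Tonelli's theorem. The trace-class hypothesis is precisely what makes the resulting series finite.

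First I would set $g(s) = \sum_{j\ge 1}[\lambda_j\phi_j^2(s)+\gamma_j\psi_j^2(s)]$, a well-defined $[0,\infty]$-valued measurable function on $[0,1]$, viewed as the monotone limit of its finite partial sums. Since every summand is non-negative, Tonelli's theorem permits exchanging the integral over $[0,1]$ with the infinite sum, giving
$$\int_0^1 g(s)\,ds = \sum_{j\ge 1}\left[\lambda_j\int_0^1\phi_j^2(s)\,ds + \gamma_j\int_0^1\psi_j^2(s)\,ds\right].$$
Because $(\phi_j)_{j\ge1}$ and $(\psi_j)_{j\ge1}$ are orthonormal in $L^2([0,1])$, each inner integral equals one, so $\int_0^1 g(s)\,ds = \sum_{j\ge 1}(\lambda_j+\gamma_j)$, which is finite by the trace-class assumption.

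From this I would draw both conclusions. A non-negative measurable function with finite integral is finite almost everywhere, so $g(s)<\infty$ a.e., which is exactly the condition \eqref{eq:ssd} defining strong spectral decomposition. For square-integrability, I would recall from the construction \eqref{eq:constr_Z} that $\|Z(s)\|_{\mathcal K}^2 = \|X(s)\|_{\mathcal H_+}^2 + \|Y(s)\|_{\mathcal H_-}^2 = \sum_j\lambda_j\phi_j^2(s) + \sum_j\gamma_j\psi_j^2(s) = g(s)$, and that the GRD $F$ is the pushforward of ${\rm Unif}(0,1)$ under the map $Z(\cdot)$. Hence $\int_{\mathcal K}\|z\|_{\mathcal K}^2\,dF(z) = \int_0^1 \|Z(s)\|_{\mathcal K}^2\,ds = \sum_{j\ge 1}(\lambda_j+\gamma_j) < \infty$, establishing that $F$ is square-integrable.

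I do not anticipate a genuine obstacle: the argument is routine once Tonelli is invoked, and indeed both claims reduce to the same integral identity. The only points meriting a line of care are the measurability of $g$ as an extended-real-valued function, which is handled by writing it as an increasing limit of finite partial sums, and the identification of square-integrability of $F$ with finiteness of $\int_0^1\|Z(s)\|_{\mathcal K}^2\,ds$, which is immediate from $F$ being the law of $Z(s)$ under $s\sim{\rm Unif}(0,1)$.
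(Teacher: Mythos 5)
Your proof is correct, but it takes a more elementary route than the paper's. The paper proves \Cref{pro:trace} by introducing the square-root kernel $W_+^{1/2}(s,s')=\sum_j \lambda_j^{1/2}\phi_j(s)\phi_j(s')$, observing that trace-class of $W$ makes its eigenvalues square-summable so that $W_+^{1/2}$ is Hilbert--Schmidt, deducing $\|W_+^{1/2}(s,\cdot)\|_{L^2([0,1])}<\infty$ a.e., and then invoking Parseval's identity to conclude $\sum_j \lambda_j\phi_j^2(s)=\|W_+^{1/2}(s,\cdot)\|_{L^2([0,1])}^2<\infty$ a.e., with square-integrability of the GRD noted to follow from the same identity. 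You bypass the auxiliary operator entirely: a single application of Tonelli's theorem to the non-negative function $g(s)=\sum_j[\lambda_j\phi_j^2(s)+\gamma_j\psi_j^2(s)]$ gives $\int_0^1 g = \sum_j(\lambda_j+\gamma_j)<\infty$, from which both conclusions drop out at once --- finiteness a.e.\ of $g$ is exactly \eqref{eq:ssd}, and $\mathbb E_F\|Z\|^2=\int_0^1 g$ is finite since $F$ is the pushforward of ${\rm Unif}(0,1)$ under the map in \eqref{eq:constr_Z}. The two arguments rest on the same facts (orthonormality of the eigenfunctions and summability of the eigenvalues), and indeed the paper's Hilbert--Schmidt step is itself a Tonelli argument in disguise; what your version buys is the elimination of operator-theoretic machinery and a fully explicit treatment of the square-integrability claim, which the paper leaves as a one-line remark. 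What the paper's version buys is the identification of $g(s)$ with the squared $L^2$-norm of the section $W_+^{1/2}(s,\cdot)$, which is conceptually aligned with the ``graph root'' interpretation of the GRD, though it is not needed for the proposition itself.
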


Trace-class integral operators are well-studied in functional analysis \citep{GohbergK88,Lax}. Some important subclasses are the following.
\begin{enumerate}
  \item Finite rank graphons.  If $W$ has finite rank, then it only has finitely many non-zero eigenvalues, and hence strong spectral decomposition holds trivially.  Important examples covered by this case include the stochastic block models, the degree corrected block models, and the mixed membership block models.
  \item Smooth graphons. If a graphon $W$, or an element in its equivalence class, is in $\alpha$-H\"{o}lder class:
  $$
  |W(x,y)-W(x,y')|\le C |y-y'|^{\alpha}
  $$
  for constants $C>0$, $\alpha>1/2$ and all $x,y,y'\in [0,1]$, then it is trace-class and hence admits strong spectral decomposition. 
  \item Continuous positive graphons. If $W$ is positive semidefinite and continuous, then $W$ is trace-class and hence admits strong spectral decomposition.  This is the famous Mercer's theorem.  One can relax the requirement of positivity by instead requiring $W=W_+-W_-$ with $W_+,~W_-$ both being positive semidefinite and continuous.  An example covered in this case is
  $$
  W(x,y) = \frac{1}{\log(e/x)\log(e/y)}\,,~~\text{ if }(x,y)\in(0,1]^2
  $$
  and $W(x,y)=0$ if $x=0$ or $y=0$.  This graphon is not in any H\"{o}lder class but is trace-class.
\end{enumerate}

The second case in the list above has a useful consequence.
Even though not all continuous graphons are trace-class, one can approximate any continuous graphon arbitrarily well using trace-class graphons.
\begin{proposition}\label{pro:convolution}
  Let $W$ be a continuous graphon. For any $\epsilon>0$ there exists a trace-class graphon $W'$ such that
  $$
  \delta_\square(W,W')\le \epsilon\,.
  $$
  Moreover, the set of trace-class continuous graphons is a dense subset of continuous graphons.
\end{proposition}
If a continuous graphon $W$ does not satisfy strong spectral decomposition, the approximation $W'$ given in \Cref{pro:convolution} with a small approximation error $\epsilon$ may have a very large trace, and its eigenvalues may decay very slowly. This can pose challenges in estimation.  We make further discussion in \Cref{subsec:embedding_error}.


\subsection{Identifiability of GRD}
When do two graph root distributions $F_1$ and $F_2$ on $\mathcal K$ lead to the same exchangeable random graph distribution?  We first exclude some trivial sources of ambiguity.

\paragraph{Ambiguity by concatenation} 
Let $(X,Y)$ be a random vector on $\mathcal K$, and let $R$ be any random variable in an Euclidean space or separable Hilbert space, the random vector $(X',Y')$ in the augmented space with $X'=(X,R)$, $Y'=(Y,R)$ leads to the same random graph sampling distribution as $(X,Y)$.  

\paragraph{Ambiguity by rotation} Let $Q$ be an inner product preserving mapping from $\mathcal K$ to $\mathcal K$: $$\langle z,z' \rangle_{\mathcal K}=\langle Qz,Qz' \rangle_{\mathcal K}\,,~~\forall~z,z'\in\mathcal K\,.$$
Then, in terms of the generated exchangeable random graph, a GRD $F$ is indistinguishable from $F_Q$, the measure induced by transforming $Z\sim F\mapsto QZ$.
An obvious example of $Q$ is the direct sum of two orthogonal transforms
$Q_+$, $Q_-$ on $\mathcal H_+$, $\mathcal H_-$ respectively, such that $Q(x,y)=(Q_+x,Q_-y)$. Such a $Q$ preserves the inner product $\langle\cdot,\cdot \rangle_{\mathcal K}$ because it preserves the inner products in both the positive and negative components. However, due to the indefinite inner product, this is not the only type of inner product preserving transforms on $\mathcal K$.  Other transforms, such as hyperbolic rotations, can also preserve the indefinite inner product. See \cite{Rubin17} for some examples of hyperbolic rotations under the context of random dot-product graphs.


To resolve the identifiability issue, a key observation is that both concatenation and hyperbolic rotation necessarily mix up the positive and negative components. So these ambiguities can be precluded by the requiring uncorrelated positive and negative components.  In this subsection, we show that the direct sum of a pair of orthogonal transforms is the only possible ambiguity in identifying a square-integrable GRD with uncorrelated positive and negative components.

\begin{definition}[Equivalence up to orthogonal transforms]\label{def:equiv_ot} We say two distributions $F_1$, $F_2$ on $\mathcal K$ are equivalent up to orthogonal transform, written as $F_1\stackrel{o.t.}{=}F_2$, if there exist orthogonal transforms $Q_+$ on $\mathcal H_+$ and $Q_-$ on $\mathcal H_-$, such that $(X,Y)\sim F_1\Leftrightarrow (Q_+X,Q_-Y)\sim F_2$.
\end{definition}

\begin{theorem}[Identifiability of GRD]\label{thm:unique} Two square-integrable GRDs $F_1$, $F_2$ with uncorrelated positive and negative components give the same exchangeable random graph sampling distribution if and only if
  $F_1\stackrel{o.t.}{=}F_2$.
\end{theorem}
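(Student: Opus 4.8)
The reverse direction is immediate: if $F_1\stackrel{o.t.}{=}F_2$ via orthogonal transforms $(Q_+,Q_-)$, then $Q=Q_+\oplus Q_-$ preserves the \krein inner product, $\langle Qz,Qz'\rangle_{\mathcal K}=\langle z,z'\rangle_{\mathcal K}$, so the edge probabilities $\langle Z_i,Z_j\rangle_{\mathcal K}$ have the same joint law under $F_1$ and $F_2$ and the two graph root sampling schemes produce identically distributed arrays. The substance is the forward direction. The plan is to first reduce equality of the graph laws to an equality of \krein Gram functions of two representing maps, then to split this single \krein identity into two separate Hilbert Gram identities using uncorrelatedness, and finally to upgrade matching Gram functions to an orthogonal transform.

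For the reduction, represent each $F_i$ by a measurable $\rho_i\colon[0,1]\to\mathcal K$ with $(\rho_i)_\ast\,\mathrm{Unif}(0,1)=F_i$, which exists because $\mathcal K$ is Polish and carries a Borel probability measure. Pushing i.i.d.\ uniforms through $\rho_i$ reproduces the graph root sampling of $F_i$, so $F_i$ generates the same array law as the graphon $W_i(s,s')=\langle\rho_i(s),\rho_i(s')\rangle_{\mathcal K}$. By the characterization of array laws in Section~2 (equivalently, matching all subgraph densities), equality of the array laws forces $W_1\stackrel{w.i.}{=}W_2$, so there are measure-preserving $h_1,h_2$ with $W_1(h_1(s),h_1(s'))=W_2(h_2(s),h_2(s'))$ a.e. Replacing $\rho_i$ by $\rho_i\circ h_i$, which still pushes forward to $F_i$, I may assume
\begin{equation*}
\langle\rho_1(s),\rho_1(s')\rangle_{\mathcal K}=\langle\rho_2(s),\rho_2(s')\rangle_{\mathcal K}=:W(s,s')\quad\text{a.e.}
\end{equation*}
Writing $\rho_i(s)=(\xi_i(s),\eta_i(s))$ with $\xi_i(s)\in\mathcal H_+$, $\eta_i(s)\in\mathcal H_-$, and setting $K_i^+(s,s')=\langle\xi_i(s),\xi_i(s')\rangle_{\mathcal H_+}$ and $K_i^-(s,s')=\langle\eta_i(s),\eta_i(s')\rangle_{\mathcal H_-}$, this reads $K_i^+-K_i^-=W$ a.e.\ for $i=1,2$.

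The main obstacle is that the decomposition of a graphon as a difference of two positive semidefinite kernels is far from unique, so $K_1^+=K_2^+$ is not automatic; this is exactly where uncorrelatedness and square integrability enter. Square integrability makes $\xi_i\in L^2([0,1];\mathcal H_+)$, so the Hilbert-Schmidt operator $U_i^+\colon\mathcal H_+\to L^2[0,1]$, $(U_i^+a)(s)=\langle a,\xi_i(s)\rangle$, satisfies $U_i^+(U_i^+)^\ast=T_{K_i^+}$ (the integral operator with kernel $K_i^+$) and $(U_i^+)^\ast U_i^+=\Sigma_i^+$ (the second-moment operator of the positive component); define $U_i^-,\,T_{K_i^-},\,\Sigma_i^-$ analogously. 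The key computation is that uncorrelatedness, $\mathbb E[\xi_{i,a}\eta_{i,b}]=0$ for all $a,b$, says precisely that $\mathrm{Range}(U_i^+)\perp\mathrm{Range}(U_i^-)$ in $L^2[0,1]$. Hence, with $M_i^\pm=\overline{\mathrm{Range}(U_i^\pm)}$, the self-adjoint compact operator $T_W=T_{K_i^+}-T_{K_i^-}$ is strictly positive on $M_i^+$, strictly negative on $M_i^-$, and zero on $(M_i^+\oplus M_i^-)^\perp$. By uniqueness of the spectral subspaces of $T_W$, which is the same operator for $i=1,2$ since $W_1=W_2$ a.e., $M_1^+=M_2^+=:M^+$ is the positive eigenspace of $T_W$ and $M_1^-=M_2^-=:M^-$ its negative eigenspace. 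Since each $T_{K_i^+}$ annihilates $(M_i^+)^\perp$ and agrees with $T_W$ on $M_i^+$, we obtain $T_{K_1^+}=T_{K_2^+}$, i.e.\ $K_1^+=K_2^+$ a.e., and likewise $K_1^-=K_2^-$ a.e.

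It remains to turn equal Gram functions on each Hilbert component into an orthogonal transform. From $K_1^+=K_2^+$ a.e.\ the two embeddings of $\mathcal H_+$ have closed spans of the same dimension $\dim M^+$, so their orthogonal complements $\ker U_1^+$ and $\ker U_2^+$ in $\mathcal H_+$ also have equal dimension. Hence the isometry determined by $\xi_1(s)\mapsto\xi_2(s)$ on $\overline{\mathrm{span}}\{\xi_1(s)\}$---well defined precisely because the two Gram functions coincide---extends to an orthogonal transform $Q_+$ of $\mathcal H_+$ with $Q_+\xi_1=\xi_2$ a.e. Constructing $Q_-$ on $\mathcal H_-$ in the same way from $K_1^-=K_2^-$ gives $(Q_+\xi_1,Q_-\eta_1)=(\xi_2,\eta_2)$ a.e., which pushes $F_1$ to $F_2$ and yields $F_1\stackrel{o.t.}{=}F_2$. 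I expect this last step to be routine Hilbert-space bookkeeping, with the genuine difficulty concentrated in the spectral separation argument of the previous paragraph, where uncorrelatedness is the one ingredient that pins the embedding down to the canonical positive/negative spectral decomposition of $T_W$.
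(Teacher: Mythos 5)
Your proof is correct in substance but takes a genuinely different route from the paper's. After the shared reduction (ITS representations, weak isomorphism, composing with measure-preserving maps so that $\langle\rho_1(s),\rho_1(s')\rangle_{\mathcal K}=\langle\rho_2(s),\rho_2(s')\rangle_{\mathcal K}$ a.e.), the paper outsources the heavy lifting: it normalizes each $\rho_i$ so that the identity $W_i(s,s')=\langle Z_i(s),Z_i(s')\rangle_{\mathcal K}$ is literally the spectral decomposition of $W_i$, and then invokes Kallenberg's Theorem 4.1$'$ on spectral representations of exchangeable arrays, which directly delivers equality of eigenvalues and the block-structured unitaries $Q_+$, $Q_-$. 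You instead prove the key uniqueness statement from scratch: uncorrelatedness is translated into orthogonality of $\mathrm{Range}(U_i^+)$ and $\mathrm{Range}(U_i^-)$ in $L^2[0,1]$, which identifies $M_i^{\pm}$ with the positive/negative spectral subspaces of the common operator $T_W$, forcing $K_1^{\pm}=K_2^{\pm}$ a.e.; then equal Gram kernels are upgraded to an orthogonal intertwiner. This is an attractive trade: your argument is self-contained and makes transparent exactly where uncorrelatedness and square-integrability enter (they make the $U_i^{\pm}$ Hilbert--Schmidt and pin the $\pm$ decomposition of $W$ to the spectral one), whereas the paper's appeal to Kallenberg is shorter and automatically handles distributional bookkeeping (e.g., that $Q_{\pm}$ respect eigenvalue multiplicities, which is what makes the final commutation with $\Lambda^{1/2}\oplus\Gamma^{1/2}$ work).

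Two steps in your last paragraph deserve care. First, defining $Q_+$ by $\xi_1(s)\mapsto\xi_2(s)$ needs an argument, since the Gram equality holds only for a.e.\ $(s,s')$ and an arbitrary null set in $[0,1]^2$ need not avoid $S\times S$ for any full-measure $S$; the clean fix stays at the operator level: $U_1^+(U_1^+)^{\ast}=U_2^+(U_2^+)^{\ast}$ gives $\|(U_1^+)^{\ast}f\|=\|(U_2^+)^{\ast}f\|$ for all $f$, so the partial isometry $Q_0$ with $Q_0(U_1^+)^{\ast}=(U_2^+)^{\ast}$ is well defined, and testing against all $f\in L^2$ yields $Q_0\xi_1(s)=\xi_2(s)$ a.e. Second, your inference that equal dimension of the closed spans implies equal dimension of their orthogonal complements is false in infinite dimensions: two subspaces of $\mathcal H_+$ of dimension $\aleph_0$ can have complements of dimensions $0$ and $1$, in which case $Q_0$ extends to an isometry but not to a surjective orthogonal transform (consider point masses at an orthonormal basis versus at an orthonormal basis of a codimension-one subspace, with matching weights). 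This degenerate corner is glossed in the paper too, via the remark that one may ``focus on the subspace spanned by the eigenvectors with non-zero variances,'' so it is a shared bookkeeping convention rather than a defect unique to your argument; but your stated justification for the extension step is incorrect as written and should either invoke that convention explicitly or assume the zero-variance subspaces of $F_1$ and $F_2$ have equal dimension.
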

The main idea of the proof is to establish a direct connection between a GRD $F$ and its corresponding graphon $W$.  Now $F$ is a probability measure on $\mathcal K$, while $W$ is a function from $[0,1]^2\rightarrow [0,1]$.  Our idea is to use an \emph{inverse transform sampling mapping} to relate the distribution $F$ to a measurable function on $[0,1]$.

\begin{definition}[Inverse transform sampling (ITS)]\label{def:ITS}
  Let $F$ be a distribution on $\mathcal K$.  A measurable function $Z:[0,1]\mapsto \mathcal K$ is called an \emph{inverse transform sampling mapping} of $F$ if
  $$s\sim {\rm Unif}(0,1)\Rightarrow Z(s)\sim F\,.$$
\end{definition}
In other words, an ITS induces the Lebesgue measure on $[0,1]$ to $F$ on $\mathcal K$.  The mapping $Z(\cdot)$ given by \eqref{eq:constr_Z} in the proof of \Cref{thm:krein_rep_exist} is an example of an ITS of the GRD $F$. 
If $\mathcal K$ is one-dimensional, then a well-known example of ITS is the inverse cumulative distribution function.  It is also straightforward to see that ITS's are not unique since if $Z(\cdot)$ is an ITS of $F$ and $h(\cdot)$ is measure-preserving then $Z(h(\cdot))$ is also an ITS of $F$.  The following result ensures that ITS's always exist for distributions on a separable Hilbert space.
\begin{proposition}[Existence of ITS]\label{pro:its}
  Let $F$ be a distribution on a separable Hilbert space, then there exists an ITS of $F$.
\end{proposition}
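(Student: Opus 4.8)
The plan is to use the fact that a separable Hilbert space is a Polish space and to reduce the construction to one-dimensional inverse-CDF sampling. Throughout, let $\mathcal{H}$ denote the separable Hilbert space, equipped with its norm topology and Borel $\sigma$-field; the degenerate case $\mathcal{H}=\{0\}$ is settled by a constant map, so I would assume $\mathcal{H}$ is uncountable.

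First I would observe that $\mathcal{H}$, being complete and separable, is an uncountable Polish space, and invoke the Borel isomorphism theorem: any two uncountable Polish spaces are Borel isomorphic. This furnishes a bijection $\varphi:\mathcal{H}\to[0,1]$ with both $\varphi$ and $\varphi^{-1}$ Borel measurable. Pushing $F$ through $\varphi$ then produces a Borel probability measure $\nu:=\varphi_\ast F$ on $[0,1]$. Second, I would handle the one-dimensional target by the usual quantile construction: set $q(u)=\inf\{t\in[0,1]:\nu([0,t])\ge u\}$, which is nondecreasing, hence Borel, and satisfies the inverse-transform identity $q_\ast(\mathrm{Leb})=\nu$ for every Borel probability measure $\nu$ on $[0,1]$, regardless of atoms. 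Taking $Z:=\varphi^{-1}\circ q$ gives a Borel map $[0,1]\to\mathcal{H}$, and composing pushforwards yields $Z_\ast(\mathrm{Leb})=\varphi^{-1}_\ast\nu=F$, which is exactly the ITS property required in \Cref{def:ITS}.

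As a more constructive alternative that avoids the Borel isomorphism theorem, one may instead fix an orthonormal basis to identify $\mathcal{H}$ with a Borel subset of $\mathbb{R}^\infty$, split a single $s\sim\mathrm{Unif}(0,1)$ into a sequence of i.i.d.\ uniforms $(U_i:i\ge 1)$ by interleaving binary digits (a measure-preserving map $[0,1]\to[0,1]^\infty$), and then generate the coordinates $(\zeta_i:i\ge 1)$ sequentially through the conditional quantile (Rosenblatt) transform, using that regular conditional distributions exist on Polish spaces.

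The main obstacle in either route is a single measure-theoretic existence fact — the Borel isomorphism theorem in the first route, or the existence of regular conditional distributions underlying the sequential transform in the second — together with verifying joint measurability of the resulting map (and, in the constructive route, that it lands in $\ell^2$ almost surely). In the isomorphism route the remaining bookkeeping is light once the theorem is granted, so I would adopt it as the main argument and relegate the constructive version to a remark.
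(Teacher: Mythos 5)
Your main argument is correct, but it is not the paper's argument; in fact, the paper's proof is precisely the route you relegate to a remark. The paper splits a single $s\sim{\rm Unif}(0,1)$ into i.i.d.\ uniforms $(t_i:i\ge 1)$ by interleaving binary digits through a bijection $k:\mathbb N\to\mathbb N^2$, and then defines the coordinates of $Z(s)$ sequentially by conditional quantile transforms, $Z_1(s)=F_1^{-1}(t_1(s))$ and $Z_j(s)=F^{-1}_{j|1:(j-1)}(t_j(s)\mid Z_1,\dots,Z_{j-1})$ for $j\ge 2$ --- exactly the Rosenblatt-type construction in your final paragraph. Your primary route via the Borel isomorphism theorem is a genuinely different, equally valid proof: a nontrivial separable Hilbert space is an uncountable Polish space, so a Borel bijection $\varphi:\mathcal H\to[0,1]$ with Borel inverse exists, and composing the one-dimensional quantile map of $\nu=\varphi_\ast F$ with $\varphi^{-1}$ pushes Lebesgue measure forward to $F$. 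What your route buys is brevity and generality: the Hilbert structure plays no role, so the same argument yields an ITS for any Borel probability measure on any Polish space, and there is no residual membership check. What it costs is that all the content is outsourced to the Borel isomorphism theorem, a nontrivial descriptive-set-theoretic black box, and the resulting map is wholly non-explicit, whereas the paper's construction is coordinate-explicit and needs only existence of conditional CDFs. The one loose end in the constructive route, which you correctly flag and the paper glosses over, is that the constructed sequence lies in $\ell^2$ almost surely; this follows because every finite-dimensional distribution of $(Z_j(s):j\ge 1)$ agrees with the corresponding marginal of $F$, so the induced law on $\mathbb R^\infty$ coincides with $F$, which assigns probability one to $\ell^2$. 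Either proof would serve.
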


Here we give a sketch of proof of \Cref{thm:unique}.
For $i=1,2$, let $Z_i(s)=(X_i(s),Y_i(s))$ be an ITS of $F_i$ and define graphon 
\begin{equation}\label{eq:grd_graphon}
  W_i(s,s')=\langle Z_i(s),Z_i(s') \rangle_{\mathcal K}\,.
\end{equation}
  By assumption that $F_1$ and $F_2$ lead to the same exchangeable random graph sampling distribution, we have $W_1\stackrel{w.i.}{=}W_2$.  Also, by choosing appropriate orthogonal rotations in the positive and negative components we can make the covariance of $Z_i$ diagonal so that $W_i(s,s')=\langle Z_i(s),Z_i(s') \rangle_{\mathcal K}$ indeed corresponds to the spectral decomposition of $W_i$.  Then the desired result follows by invoking an exchangeable array representation theorem in the form of spectral decompositions due to Kallenberg \citep{Kallenberg89}.

We summarize our representation results in the following corollary.
\begin{corollary}[Correspondence between graphon and GRD]\label{cor:representation}
  There exists a one-to-one correspondence between trace-class graphons (under the equivalence relation ``$\stackrel{w.i.}{=}$'') and square-integrable GRD's with uncorrelated positive and negative components (under the equivalence relation ``$\stackrel{o.t.}{=}$'').
\end{corollary}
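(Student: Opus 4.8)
The plan is to package the existence and identifiability results into a genuine bijection by building a map in each direction and showing they are mutually inverse. Write $\mathcal{W}$ for the set of trace-class graphons modulo $\stackrel{w.i.}{=}$ and $\mathcal{F}$ for the set of square-integrable GRDs with uncorrelated positive and negative components modulo $\stackrel{o.t.}{=}$. The forward map $\Phi:\mathcal W\to\mathcal F$ sends a trace-class $W$ to the GRD $F$ built from its spectral decomposition: \Cref{pro:trace} guarantees strong spectral decomposition, so the construction \eqref{eq:constr_Z} in the proof of \Cref{thm:krein_rep_exist} produces an $F$ whose graph root sampling matches graphon sampling from $W$; the same proposition gives square-integrability, and the mutual orthogonality of $(\phi_j)$ and $(\psi_j)$ in $L^2([0,1])$ forces $\mathbb E(X_jY_{j'})=\lambda_j^{1/2}\gamma_{j'}^{1/2}\langle\phi_j,\psi_{j'}\rangle_{L^2}=0$, so the components are uncorrelated and $F\in\mathcal F$. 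The backward map $\Psi:\mathcal F\to\mathcal W$ sends $F$ to the graphon $W_F(s,s')=\langle Z(s),Z(s')\rangle_{\mathcal K}$ of \eqref{eq:grd_graphon}, where $Z$ is an ITS of $F$ (which exists by \Cref{pro:its} applied to $\mathcal H_+\oplus\mathcal H_-$); since $F$ is a GRD, $W_F$ takes values in $[0,1]$ almost everywhere and is a legitimate graphon.

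First I would check both maps descend to equivalence classes. For $\Phi$, any two GRDs produced from $W$ (via different eigenbases, or from a weakly isomorphic $W'$) generate the same exchangeable random graph distribution, hence are related by $\stackrel{o.t.}{=}$ via \Cref{thm:unique}; so $[F]$ depends only on $\tilde W$. For $\Psi$, two ITS of the same $F$ both reproduce the graph root sampling distribution of $F$, so their graphons generate identical random graphs and are therefore $\stackrel{w.i.}{=}$; moreover if $F_1\stackrel{o.t.}{=}F_2$ then the orthogonal transforms $Q_+,Q_-$ of \Cref{def:equiv_ot} preserve $\langle\cdot,\cdot\rangle_{\mathcal K}$, so $F_1$ and $F_2$ generate the same random graph and $W_{F_1}\stackrel{w.i.}{=}W_{F_2}$.

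Next I would verify $\Psi\circ\Phi=\mathrm{id}$ and $\Phi\circ\Psi=\mathrm{id}$. For the first, $F=\Phi(\tilde W)$ generates the same distribution as $W$ by construction, so $W_F\stackrel{w.i.}{=}W$. For the second, $W_F=\Psi([F])$ generates the graph root sampling distribution of $F$, and $\Phi(\tilde W_F)$ is then a square-integrable GRD with uncorrelated components generating that very distribution; by \Cref{thm:unique} it must equal $[F]$. Together these give the claimed one-to-one correspondence.

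The main obstacle is the one place where the argument is not purely formal: showing $\Psi$ really lands in the trace-class graphons. Square-integrability of $F$ makes the second-moment operators of $X$ and $Y$ trace-class, so I would replace $Z=(X,Y)$ by $(Q_+X,Q_-Y)$ with $Q_\pm$ diagonalizing these operators, a move that stays inside $[F]$ and preserves uncorrelatedness. After this rotation the system $\{X_j/\|X_j\|_{L^2}\}\cup\{Y_j/\|Y_j\|_{L^2}\}$ is orthonormal in $L^2([0,1])$ (using uncorrelatedness for the cross terms), so $W_F(s,s')=\langle Z(s),Z(s')\rangle_{\mathcal K}$ is exactly a spectral decomposition of the form \eqref{eq:spec_decomp_W}, as already noted in the sketch of \Cref{thm:unique}, with eigenvalues summing to $\mathbb E\|Z\|_{\mathcal K}^2<\infty$. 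Hence $W_F$ is trace-class, closing the only genuine gap.
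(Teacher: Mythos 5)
Your proposal is correct and follows essentially the same route the paper intends: the corollary is stated as a summary of \Cref{thm:krein_rep_exist}, \Cref{pro:trace}, \Cref{pro:its}, and \Cref{thm:unique}, and you assemble exactly these ingredients into the two maps and their inverse relations. Your one addition beyond the paper's implicit argument --- verifying that the graphon $W_F$ produced by an ITS of a square-integrable $F$ is trace-class, via diagonalizing the second-moment operators so that $\langle Z(s),Z(s')\rangle_{\mathcal K}$ becomes a spectral decomposition with $\sum_j(\lambda_j+\gamma_j)=\mathbb E\|Z\|_{\mathcal K}^2<\infty$ --- is a legitimate and necessary step that the paper leaves unstated (it appears only implicitly in the proof of \Cref{thm:unique}), and you carry it out correctly.
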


\paragraph{Canonical GRD} Since any square-integrable GRD with uncorrelated positive and negative components is identifiable up to a pair of orthogonal transforms, we can choose appropriate orthogonal transforms so that the covariance of the GRD is diagonalized.  Such a choice can be used as a canonical representation.  If all eigenvalues of the covariance operator have multiplicity one, then the canonical GRD $F$ is determined up to the sign of each coordinate.   As we will see in \Cref{sec:estimation} below, our estimator recovers one of the canonical GRD's.  

\subsection{Topology of the GRD space: orthogonal Wasserstein distance}
Having established the GRD representation of exchangeable random graphs, we can study the closeness of graph sampling distributions by looking at the closeness of GRD's. To this end, we consider a metric on the quotient space of square-integrable distributions on $\mathcal K$ with respect to the equivalence relation ``$\stackrel{o.t.}{=}$'', which we call the \emph{orthogonal Wasserstein metric}. We will show that convergence of a sequence of GRD's in this metric implies convergence of corresponding graphons in cut-distance.

We start by recalling the Wasserstein distance. We will only use a special case of the Wasserstein distance suitable for our purpose. Given two probability distributions $F_1,F_2$ on $\mathcal K$, the Wasserstein distance between $F_1,F_2$ is
\begin{align*}
  d_{\rm w}(F_1,F_2):=\inf_{\nu\in \mathcal V(F_1,F_2)} \mathbb E_{(Z_1,Z_2)\sim \nu}\|Z_1-Z_2\|\,,
\end{align*}
where $\mathcal V(F_1,F_2)$ is the collection of all distributions on $\mathcal K \times \mathcal K$ with $F_1$ and $F_2$ being its two marginal distributions.

The following lemma says that if two square-integrable GRD's are close in Wasserstein distance, then the corresponding graphons are close in cut-distance.
\begin{lemma}[Wasserstein and cut distances]\label{lem:wass>cut}
  Let $F_1$ and $F_2$ be two square-integrable GRD's on $\mathcal K$, with corresponding graphons $W_1$, $W_2$ defined using ITS as in \eqref{eq:grd_graphon}.  Then
  \begin{align*}
    \delta_\square(W_1,W_2)\le (\mathbb E_{Z\sim F_1}\|Z\|+\mathbb E_{Z\sim F_2}\|Z\|) d_{\rm w}(F_1,F_2)\,.
  \end{align*}
\end{lemma}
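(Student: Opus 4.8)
The plan is to exploit the fact that the graphons $W_1,W_2$ in the statement are determined only up to weak isomorphism. Since $\delta_\square$ is unchanged when a graphon is replaced by another member of its equivalence class, and since any ITS of a fixed GRD $F_i$ produces a graphon that generates the same exchangeable random graph (hence is weakly isomorphic to $W_i$), I am free to compute $\delta_\square(W_1,W_2)$ using whichever ITS mappings are most convenient. The key idea is to manufacture the two ITS from a single near-optimal coupling of $F_1$ and $F_2$, so that both node mappings live on a common copy of $[0,1]$ and are pointwise close in $\mathcal K$.

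Concretely, I would fix $\varepsilon>0$ and choose a coupling $\nu\in\mathcal V(F_1,F_2)$ with $\mathbb E_\nu\|Z_1-Z_2\|\le d_{\rm w}(F_1,F_2)+\varepsilon$. Because $\mathcal K\times\mathcal K$, as a normed space, is isomorphic to a separable Hilbert space, \Cref{pro:its} provides an ITS $s\mapsto(Z_1(s),Z_2(s))$ of $\nu$, whose marginals $Z_1,Z_2$ are then ITS of $F_1,F_2$ respectively. Defining $W_i(s,s')=\langle Z_i(s),Z_i(s')\rangle_{\mathcal K}$ with these mappings and bounding the cut-distance by taking $h_1=h_2=\mathrm{id}$, we get
\[\delta_\square(W_1,W_2)\le\sup_{S,S'}\left|\int_{S\times S'}\big[W_1(s,s')-W_2(s,s')\big]\,ds\,ds'\right|\,.\]
I would then expand the integrand with the telescoping identity
\[\langle Z_1(s),Z_1(s')\rangle-\langle Z_2(s),Z_2(s')\rangle=\langle Z_1(s)-Z_2(s),Z_1(s')\rangle+\langle Z_2(s),Z_1(s')-Z_2(s')\rangle\,,\]
valid by bilinearity of the Krein inner product. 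Integrating over $S\times S'$ and passing the (Bochner) integrals into each slot turns each term into a single inner product of two vector integrals, such as $\big\langle\int_S(Z_1-Z_2)(s)\,ds,\ \int_{S'}Z_1(s')\,ds'\big\rangle_{\mathcal K}$. The essential tool is an indefinite Cauchy--Schwarz bound: writing $z=(x,y)$, one has $|\langle z,z'\rangle_{\mathcal K}|\le\|x\|\|x'\|+\|y\|\|y'\|\le\|z\|_{\mathcal K}\|z'\|_{\mathcal K}$, the last step by Cauchy--Schwarz in $\mathbb R^2$. Combining this with $\|\int_B V(s)\,ds\|_{\mathcal K}\le\int_0^1\|V(s)\|_{\mathcal K}\,ds$ for $B\subseteq[0,1]$, and with $\int_0^1\|Z_1(s)-Z_2(s)\|\,ds=\mathbb E_\nu\|Z_1-Z_2\|$, $\int_0^1\|Z_i(s)\|\,ds=\mathbb E_{F_i}\|Z\|$, each term is at most $\mathbb E_{F_i}\|Z\|\cdot\mathbb E_\nu\|Z_1-Z_2\|$. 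Summing, taking the supremum over $S,S'$, and letting $\varepsilon\to0$ yields the claim.

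The moment bookkeeping is routine: square-integrability of $F_1,F_2$ gives finite first moments $\mathbb E_{F_i}\|Z\|$ and legitimizes the Bochner integrals together with the Fubini-type interchange of the scalar integral and the bilinear form. The step I expect to require the most care is the passage from the probabilistic coupling $\nu$ to a joint ITS via \Cref{pro:its}, since this is precisely what forces both graphons onto a common parameterization and makes the identity-map bound on $\delta_\square$ effective; verifying that the resulting marginals are bona fide ITS of $F_1,F_2$ and that this substitution leaves $\delta_\square$ unchanged is the crux. The indefinite Cauchy--Schwarz inequality $|\langle z,z'\rangle_{\mathcal K}|\le\|z\|_{\mathcal K}\|z'\|_{\mathcal K}$ is elementary but indispensable, as it is the only point at which the Krein geometry enters the estimate.
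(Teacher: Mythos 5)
Your proposal is correct and takes essentially the same route as the paper's proof: both hinge on realizing a (near-)optimal coupling $\nu\in\mathcal V(F_1,F_2)$ through a joint ITS so that the two graphons share a common parameterization of $[0,1]$ (making the identity-map bound on $\delta_\square$ legitimate), followed by the telescoping identity and the indefinite Cauchy--Schwarz bound $|\langle z,z'\rangle_{\mathcal K}|\le\|z\|_{\mathcal K}\|z'\|_{\mathcal K}$. The only difference is cosmetic: the paper bounds the cut norm by the $L^1$ norm and factors expectations using two independent draws $(Z_1,Z_2),(Z_1',Z_2')$ from the coupling, whereas you retain the sets $S,S'$ and push Bochner integrals inside the inner product; both yield the identical final bound.
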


Since we do not distinguish two GRD's differing only by orthogonal transforms on positive and negative components, we consider the \emph{orthogonal Wasserstein distance}
\begin{align}
  d_{\rm ow}(F_1,F_2):=\inf_{\nu\in\mathcal V(F_1,F_2)} \inf_{Q_+,Q_-}\mathbb E_{(Z_1,Z_2)\sim \nu}\|Z_1-(Q_+\oplus Q_-)Z_2\|\,,
\end{align}
where $Q_+$, $Q_-$ range over all orthogonal transforms on $\mathcal H_+$, $\mathcal H_-$, respectively, and $(Q_+\oplus Q_-)$ denotes the orthogonal transform as the direct sum of $Q_+$ and $Q_-$: $(Q_+\oplus Q_-)(x,y)=(Q_+x,Q_-y)$.

We can improve \Cref{lem:wass>cut} to the orthogonal Wasserstein distance, which says that orthogonal Wasserstein distance induces a stronger topology than cut-distance.
\begin{theorem}\label{thm:topology}
  Let $F_1$, $F$ be two square-integrable GRD's on $\mathcal K$, with corresponding graphons $W_1$, $W$ as obtained by ITS in \eqref{eq:grd_graphon}.
  Then \begin{align*}
    \delta_\square(W_1,W)\le (\mathbb E_{Z\sim F_1}\|Z\|+\mathbb E_{Z\sim F}\|Z\|) d_{\rm ow}(F_1,F)\,.
  \end{align*}
As a consequence, if $(F_N:N\ge 1)$ are square-integrable GRD's on $\mathcal K$ with corresponding graphons $(W_N:N\ge 1)$, then
  $$
  d_{\rm ow}(F_N,F)\rightarrow 0 ~\Rightarrow~
  \delta_{\square}(W_{N},W)\rightarrow 0\,.$$
\end{theorem}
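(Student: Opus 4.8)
The plan is to reduce \Cref{thm:topology} to \Cref{lem:wass>cut} by exploiting the rotation invariance that is already built into the definition of $d_{\rm ow}$. The key starting observation is that the orthogonal Wasserstein distance can be rewritten as an infimum of ordinary Wasserstein distances over rotated copies of $F$, namely
$$d_{\rm ow}(F_1,F)=\inf_{Q_+,Q_-}d_{\rm w}(F_1,F_Q)\,,$$
where $Q=Q_+\oplus Q_-$ and $F_Q$ is the pushforward of $F$ under $Q$. To justify this I would use the bijection $\nu\mapsto\nu'$ between couplings in $\mathcal V(F_1,F)$ and couplings in $\mathcal V(F_1,F_Q)$ induced by $(Z_1,Z_2)\mapsto(Z_1,QZ_2)$; under this bijection $\mathbb E_{\nu'}\|Z_1-Z_2'\|=\mathbb E_{\nu}\|Z_1-QZ_2\|$, so the coupling infimum defining $d_{\rm w}(F_1,F_Q)$ matches the coupling infimum inside $d_{\rm ow}$, and pulling the infimum over $Q$ outside recovers the identity after swapping the two infima.

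Next I would show that rotating $F$ changes neither the associated graphon class nor the expected norm, and that $F_Q$ remains an admissible input for \Cref{lem:wass>cut}. Since $Q$ preserves the \krein inner product, $F_Q$ is again a square-integrable GRD, and if $Z(\cdot)$ is an ITS of $F$ then $QZ(\cdot)$ is an ITS of $F_Q$ whose graphon via \eqref{eq:grd_graphon} is $\langle QZ(s),QZ(s')\rangle_{\mathcal K}=\langle Z(s),Z(s')\rangle_{\mathcal K}=W(s,s')$; hence $W_Q\stackrel{w.i.}{=}W$ and $\delta_\square(W_Q,W)=0$. Moreover $Q$ is norm-preserving, so $\mathbb E_{F_Q}\|Z\|=\mathbb E_F\|Z\|$. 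Applying \Cref{lem:wass>cut} to the pair $(F_1,F_Q)$ together with the triangle inequality for the cut metric then gives, for every $Q$,
$$\delta_\square(W_1,W)\le\delta_\square(W_1,W_Q)\le\left(\mathbb E_{F_1}\|Z\|+\mathbb E_F\|Z\|\right)d_{\rm w}(F_1,F_Q)\,.$$
Taking the infimum over $Q_+,Q_-$ and invoking the identity from the first step yields the stated inequality.

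For the consequence, the only genuinely new point is that the prefactor $\mathbb E_{F_1}\|Z\|+\mathbb E_F\|Z\|$ becomes $\mathbb E_{F_N}\|Z\|+\mathbb E_F\|Z\|$, which must stay bounded as $N\to\infty$. This follows from one triangle inequality: choosing a near-optimal coupling $\nu_N$ and rotation $Q_N$ for $d_{\rm ow}(F_N,F)$, the norm-preservation of $Q_N$ gives
$$\mathbb E_{F_N}\|Z\|\le\mathbb E_{\nu_N}\|Z_1-Q_NZ_2\|+\mathbb E_{\nu_N}\|Q_NZ_2\|\le d_{\rm ow}(F_N,F)+o(1)+\mathbb E_F\|Z\|\,,$$
which is bounded once $d_{\rm ow}(F_N,F)\to 0$. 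Combined with the main inequality applied to each $(F_N,F)$, a bounded prefactor times $d_{\rm ow}(F_N,F)\to 0$ forces $\delta_\square(W_N,W)\to 0$.

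I do not anticipate a real analytic obstacle, since \Cref{lem:wass>cut} carries the analytic weight; the work is entirely in the two bookkeeping reductions above. The step most prone to error is the coupling reindexing in the first paragraph, where one must check that pushing the second marginal forward under $Q$ is a measure-isomorphism of coupling spaces, so that no couplings are gained or lost and the two infima genuinely coincide. A secondary point to state explicitly is that $\delta_\square$ is a metric on weak-isomorphism classes, which is what licenses both the triangle inequality and the use of $\delta_\square(W_Q,W)=0$.
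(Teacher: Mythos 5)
Your proposal is correct and follows essentially the same route as the paper's proof: both pick a near-optimal rotation $Q$ (the paper takes $Q_N$ with $d_{\rm w}(Q_NZ_N,Z)\le d_{\rm ow}(F_N,F)+\epsilon$, which is exactly your identity $d_{\rm ow}(F_1,F)=\inf_Q d_{\rm w}(F_1,F_Q)$ made implicit), use that $Q$ preserves both the \krein inner product (hence the graphon up to weak isomorphism) and the norm, invoke \Cref{lem:wass>cut}, and bound the prefactor via $\mathbb E_{F_N}\|Z\|\le\mathbb E_F\|Z\|+d_{\rm ow}(F_N,F)$. You merely spell out the coupling-reindexing and ITS bookkeeping that the paper leaves tacit, so no substantive difference or gap.
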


\section{Estimation of graph root distributions}\label{sec:estimation}
Given $n\ge 1$, suppose we have observed an $n\times n$ block of $\mathbf A$: $\mathbf A_n=(A_{ij}:1\le i,j\le n)$, where $\mathbf A$ is generated from a GRD $F$.  In such finite sample scenarios, GRD and graphon are used as modeling tools and are no longer linked to the infinite exchangeability, since the Aldous-Hoover theorem is only applicable to the infinite case.  We consider the following two inference questions.
\begin{enumerate}
  \item Node embedding: Can we recover the realized sample of node vectors $Z_1,..,Z_n$ in $\mathcal K$?
  \item Distribution estimation: Can we recover the GRD $F$ with small orthogonal Wasserstein distance?
\end{enumerate}

\paragraph{Notation} For an infinite vector $x$, $x^{(p)}$ denotes the first $p$ elements of $x$.  For a matrix $\mathbf M$ with countably infinite number of columns, $\mathbf M^{(p)}$ denotes the submatrix consisting of the first $p$ columns.  For a matrix $\mathbf Z=(\mathbf X,\mathbf Y)$ with $n$ rows and each row taking value in $\mathcal K$, $\mathbf Z^{(p_1,p_2)}=(\mathbf X^{(p_1)},\mathbf Y^{(p_2)})$. 

\subsection{Truncated weighted spectral embedding}\label{subsec:estimator}
Write $\mathbf A_n$ in its eigen decomposition
$$
\mathbf A_n = \sum_{j=1}^{n_1} \hat\lambda_{j,A}\hat a_j\hat a_j^T - 
\sum_{j=1}^{n-n_1}\hat\gamma_{j,A} \hat b_j\hat b_j^T\,,
$$
where $\hat\lambda_{1,A}\ge \hat\lambda_{2,A}\ge ...\ge \hat\lambda_{n_1,A}\ge 0$ are the non-negative eigenvalues of $A$, and $\hat\gamma_{1,A}\ge ...\ge \hat\gamma_{n-n_1,A}>0$ are the absolute negative eigenvalues of $A$.

Let $p_1,p_2<n$ be nonnegative integers to be specified later. We consider the weighted $(p_1+p_2)$-dimensional spectral embedding of the nodes
\begin{align}
\hat{\mathbf Z}_{A}= & \left[\hat{\mathbf X}^{(p_1)}_A,~\hat{\mathbf Y}^{(p_2)}_A\right]\,,\nonumber\\
\hat{\mathbf X}^{(p_1)}_A = & \left[\hat\lambda_{1,A}^{1/2}\hat a_{1},...,\hat\lambda_{p_1,A}^{1/2}\hat a_{p_1}\right] = \left[\hat a_1,...,\hat a_{p_1}\right]\hat\Lambda_{p_1,A}^{1/2}\,,\label{eq:estimator}\\  
\hat{\mathbf Y}^{(p_2)}_A = & \left[\hat\gamma_{1,A}^{1/2}\hat b_{1},...,\hat\gamma_{p_2,A}^{1/2}\hat b_{p_2}\right] = \left[\hat b_1,...,\hat b_{p_2}\right]\hat\Gamma_{p_2,A}^{1/2}\,,\nonumber 
\end{align}
where $\hat\Lambda_{p_1,A}$ is the $p_1\times p_1$ diagonal matrix with diagonal entries being $(\hat\lambda_{1,A},...,\hat\lambda_{p_1,A})$, and $\hat\Gamma_{p_2,A}$ is defined similarly.

We use the rows of $\hat{\mathbf Z}_A$ to estimate the realized sample points $Z_1,...,Z_n$ as follows.
\begin{align}
  \hat X_{i,A} = & (\hat\lambda_{1,A}^{1/2}\hat a_{1i},...,\hat\lambda_{p_1,A}^{1/2}\hat a_{p_1,i},0,...)\in\mathcal H_+\,,\nonumber\\
  \hat Y_{i,A} = &(\hat\gamma_{1,A}^{1/2}\hat b_{1i},...,\hat\gamma_{p_2,A}^{1/2}\hat b_{p_2,i},0,...)\in\mathcal H_-\,.\label{eq:sample_point_est}
\end{align}
In other words, $\hat X_{i,A}$, $\hat Y_{i,A}$ are the $i$th row of $\hat{\mathbf X}_A^{(p_1)}$, $\hat{\mathbf Y}_A^{(p_2)}$, padded with zeros in the tails.  The same weighted spectral embedding has been considered in random dot-product graphs in finite dimensional spaces and at the sample level \citep{Nickel08,Athreya17,Rubin17}. Here we focus more on the infinite  dimensional case, where $p_1$, $p_2$ need to grow with $n$, and study the statistical properties of the embeddings at a population level with a goal of estimating the GRD $F$.


\subsection{Reconstruction error of sample points}\label{subsec:embedding_error}
In order to show that the estimated node vectors $(\hat X_{i,A},\hat Y_{i,A})$ are close to the true but hidden realized node vectors $(X_i, Y_i)$, it is necessary to identify a particular orthogonal transform $Q=Q_+\oplus Q_-$ to work with.  To this end, we make the following assumption to clear the identifiability issue.
\begin{itemize}
  \item [(A1)] For all $j,j'\ge 1$, $\mathbb E_{(X,Y)\sim F}(X_j X_{j'})=\lambda_j \mathbf 1(j=j')$, $\mathbb E_{(X,Y)\sim F}(Y_j Y_{j'})=\gamma_j \mathbf 1(j=j')$, $\mathbb E_{(X,Y)\sim F}(X_j Y_{j'})=0$.
\end{itemize}
This assumption is non-technical, it merely says that we pick a canonical element among all possible orthogonal transforms on the positive and negative spaces.

Our next assumption is a polynomial eigen decay and eigen-gap condition.
\begin{itemize}
  \item [(A2)] There exist positive numbers $c_1\le c_2$, $1<\alpha\le\beta$ such that for all $j\ge 1$
  \begin{align*}
  &c_1 j^{-\alpha} \le (\lambda_j\wedge \gamma_j)\le (\lambda_j\vee \gamma_j) \le c_2 j^{-\alpha}\,,\\
  &(\lambda_j-\lambda_{j+1})\wedge (\gamma_j-\gamma_{j+1})\ge c_1 j^{-\beta}\,,
  \end{align*}
  with $\lambda_j,\gamma_j$ defined in assumption (A1).
\end{itemize}
Assumption (A2) is often used in the literature of functional data analysis, where one needs to control the estimation error of individual eigenvectors for random variables in Hilbert spaces, using a truncated empirical eigen decomposition \citep{HallH07,Meister11,Lei14function}. When $\lambda_j\propto j^{-\alpha}$, the eigengap condition usually holds with $\beta=\alpha+1$. The random vector $Z=(X,Y)\sim F$ is square-integrable if $\alpha>1$. 

Assumption (A2) may seem a bit too stringent. Indeed we only need to consider the first $p_1+1$ ($p_2+1$) positive (negative) eigenvalues.  The estimation error bound can be given as a function of all individual gaps between these eigenvalues, where equal or nearly equal eigenvalues can be treated by considering the corresponding principal subspace.  This will make the presentation too cumbersome and will not change much of the nature of our argument.  Another simplification made in Assumption (A2) is that the positive eigenvalues $(\lambda_j:j\ge 1)$ and negative eigenvalues $(\gamma_j:j\ge 1)$ decay at the same speed. Our analysis does allow for different decay speeds for the positive and negative eigenvalues. That will require choosing $p_1$ and $p_2$ separately, which involves a heavier notation. We choose to work with the version of Assumption (A2) stated above for presentation simplicity.

As mentioned in the discussion after \Cref{pro:convolution}, if we use a trace-class graphon $W'$ to approximate a continuous graphon $W$ that does not satisfy strong spectral decomposition, the eigenvalues of $W'$ may decay slowly, which corresponds to a small value of $\alpha$ in Assumption (A2), and leads to a slower rate of convergence of the estimation error bound.

Finally, our procedure requires accurate estimation of the eigenvectors, which in turn requires accurate estimation of the covariance operator.  We assume that the GRD has finite fourth moment.
\begin{itemize}
  \item [(A3)] $\mathbb E_{Z\sim F}\|Z\|^4<\infty$.
\end{itemize}

\begin{theorem}[Sample points recovery]\label{thm:sample_recov}
  Let $\mathbf A_n$ be generated from a GRD $F$ satisfying (A1-A3). Let $\mathbf Z=(\mathbf X,\mathbf Y)$ be the hidden node data matrix with the $i$th row being $(X_i,Y_i)\in\mathcal K$ ($1\le i\le n$).
 If 
  $$p_1=p_2=p=o\left(n^{\frac{1}{2\beta+\alpha}}\right)\,,$$ then the estimator $\hat{\mathbf Z}_{A}$ given in \eqref{eq:estimator} satisfies
  \begin{align*}
    n^{-1}\|\hat{\mathbf Z}_A-\mathbf Z^{(p,p)}\|_F^2 = O_P\left(n^{-\frac{\alpha-1}{2\beta}}+p^{2\beta+1}n^{-1}\right)
  \end{align*}
  where $\|\cdot\|_F$ denotes the Frobenius norm.  As a consequence, let
  $\hat F_A^{(p)}$ be the empirical distribution putting $1/n$ probability mass at each row of $\hat{\mathbf Z}_A$, and $\hat F^{(p)}$ putting $1/n$ mass at each row of
  $\mathbf Z^{(p,p)}$, then
  $$
  d_{\rm w}(\hat F_A^{(p)}, \hat F^{(p)})= O_P\left(n^{-\frac{\alpha-1}{4\beta}}+p^{\beta+1/2}n^{-1/2}\right)\,.
  $$
\end{theorem}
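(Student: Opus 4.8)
The plan is to treat this as an adjacency spectral embedding problem and argue by matrix perturbation, comparing $\mathbf A_n$ to its conditional mean and the latter to the latent coordinates. Write $\mathbf P=\mathbb E[\mathbf A_n\mid \mathbf Z]$, so that $P_{ij}=\langle Z_i,Z_j\rangle_{\mathcal K}$ for $i\neq j$ and $P_{ii}=0$; equivalently $\mathbf P=\mathbf X\mathbf X^T-\mathbf Y\mathbf Y^T-\mathbf D$, where the diagonal matrix $\mathbf D$ removes the self inner products $\|X_i\|^2-\|Y_i\|^2$. Under (A3) a fourth-moment plus union-bound argument gives $\|\mathbf D\|_{\rm op}=O_P(n^{1/2})$, so $\mathbf D$ can be absorbed into the noise.

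First I would isolate the two facts that drive everything. (i) \emph{Concentration}: $\mathbf A_n-\mathbf P$ is symmetric with independent, mean-zero, $[-1,1]$-bounded entries above the diagonal, so standard random-matrix concentration for bounded-entry symmetric matrices yields $\|\mathbf A_n-\mathbf P\|_{\rm op}=O_P(n^{1/2})$. (ii) \emph{Population eigenstructure}: by (A1) and the law of large numbers, $n^{-1}\mathbf X^T\mathbf X\to\mathrm{diag}(\lambda_j)$, $n^{-1}\mathbf Y^T\mathbf Y\to\mathrm{diag}(\gamma_j)$ and $n^{-1}\mathbf X^T\mathbf Y\to 0$, each with $O_P(n^{-1/2})$ entrywise fluctuation. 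Hence the columns of $\mathbf X$ and of $\mathbf Y$ are asymptotically orthogonal, the positive eigenvalues $\lambda_{j,P}$ of $\mathbf P$ satisfy $\lambda_{j,P}\approx n\lambda_j$ with eigenvectors $a_j\approx \mathbf X_{\cdot j}/\|\mathbf X_{\cdot j}\|$ (and symmetrically for the negative block), and the scaled eigenvector $\lambda_{j,P}^{1/2}a_j$ agrees with the coordinate column $\mathbf X_{\cdot j}$ up to the off-diagonal fluctuation of the empirical Gram matrix.

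Next I would pass from $\mathbf P$ to $\mathbf A_n$. Weyl's inequality gives $|\hat\lambda_{j,A}-\lambda_{j,P}|\le\|\mathbf A_n-\mathbf P\|_{\rm op}=O_P(n^{1/2})$, and the Davis--Kahan $\sin\Theta$ theorem gives $\|\hat a_j-a_j\|\lesssim\|\mathbf A_n-\mathbf P\|_{\rm op}/\mathrm{gap}_j$ with $\mathrm{gap}_j\gtrsim c_1 n j^{-\beta}$ by (A2). I would then decompose, for a suitable choice of sign of $\hat a_j$,
$$\hat\lambda_{j,A}^{1/2}\hat a_j-\mathbf X_{\cdot j}=\hat\lambda_{j,A}^{1/2}(\hat a_j-a_j)+\big(\hat\lambda_{j,A}^{1/2}-\lambda_{j,P}^{1/2}\big)a_j+\big(\lambda_{j,P}^{1/2}a_j-\mathbf X_{\cdot j}\big),$$
separating the direction error, the magnitude error, and the population approximation (with the analogous identity for the negative block). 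Bounding each piece via ingredients (i)--(ii), summing the squared norms over $j\le p$ for both blocks, and dividing by $n$ delivers the bound $O_P(p^{-(\alpha-1)})$. The role of the bandwidth condition is transparent here: the worst summand is the direction error near $j=p$, whose weighted size is governed by $(\|\mathbf A_n-\mathbf P\|_{\rm op}/\mathrm{gap}_p)(\lambda_1/\lambda_p)^{1/2}\approx p^{\beta+\alpha/2}n^{-1/2}$, and this is $o(1)$ precisely when $p=o(n^{1/(2\beta+\alpha)})$.

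I expect the main obstacle to be controlling the scaled eigenvectors uniformly over $j\le p$: the eigengaps shrink polynomially like $j^{-\beta}$, so the high-order directions near the truncation level are the hardest to estimate and are the ones most amplified by the weight $\hat\lambda_{j,A}^{1/2}$; in addition one must fix the sign of each $\hat a_j$ to align it with $\mathbf X_{\cdot j}$ (under (A1) the canonical rotation is pinned down only up to coordinate signs) and exclude $\mathbf X$--$\mathbf Y$ cross-contamination across the gap that separates the positive from the negative spectrum of $\mathbf P$. Once the Frobenius bound holds, the distributional statement is immediate: the coupling matching the $i$-th row of $\hat{\mathbf Z}_A$ with the $i$-th row of $\mathbf Z^{(p,p)}$ is admissible for $\mathcal V(\hat F_A^{(p)},\hat F^{(p)})$, so by the Cauchy--Schwarz inequality
$$d_{\rm w}(\hat F_A^{(p)},\hat F^{(p)})\le\frac1n\sum_{i=1}^n\|\hat Z_{i,A}-Z_i^{(p,p)}\|\le\Big(n^{-1}\|\hat{\mathbf Z}_A-\mathbf Z^{(p,p)}\|_F^2\Big)^{1/2}=O_P\big(p^{-(\alpha-1)/2}\big).$$
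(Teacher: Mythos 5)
Your overall architecture---compare $\mathbf A_n$ to $\mathbf P=\mathbb E[\mathbf A_n\mid \mathbf Z]$ by bounded-entry random-matrix concentration, then compare the eigenstructure of $\mathbf P$ to the latent coordinates---is reasonable, your handling of the diagonal $\mathbf D$ works, and your last step (the row-matching coupling plus Cauchy--Schwarz) is exactly how the Wasserstein consequence follows. The genuine gap is in your ingredient (ii) and in the third term of your decomposition, $\lambda_{j,P}^{1/2}a_j-\mathbf X_{\cdot j}$, which you dismiss as being controlled by ``the off-diagonal fluctuation of the empirical Gram matrix.'' That term is where essentially all of the work lies, and it is where the claimed rate actually comes from. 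Two separate difficulties are hidden in it. First, the negative block is not a small perturbation: $\|\mathbf Y\mathbf Y^T\|_{\rm op}\asymp n\gamma_1$ is of the same order as the leading eigenvalues of $\mathbf X\mathbf X^T$, so asymptotic orthogonality of the columns of $\mathbf X$ and $\mathbf Y$ does not by itself let you read off eigenpairs of $\mathbf P$ from those of $\mathbf X\mathbf X^T$. The paper handles this (\Cref{lem:eigen_comp_G_X}) by splitting $\mathbf Y=\hat{\mathbf Y}+\tilde{\mathbf Y}$ into its projection onto the span of the top-$p$ left singular vectors of $\mathbf X$ and the orthogonal remainder: the piece $\tilde{\mathbf Y}\tilde{\mathbf Y}^T$ leaves the top-$p$ eigenstructure of $\mathbf X\mathbf X^T$ untouched, while the cross terms have operator norm $O_P(n^{1/2}p^{\alpha/2})$---note the extra factor $p^{\alpha/2}$, coming from $\|\hat\Lambda_p^{-1/2}\|_{\rm op}$---and it is this perturbation, compared against the eigengap $\asymp np^{-\beta}$, that forces $p=o\bigl(n^{1/(2\beta+\alpha)}\bigr)$. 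The same analysis is what justifies the eigengap of $\mathbf P$ that your Davis--Kahan step takes for granted; (A2) only concerns the population eigenvalues $\lambda_j$, not the spectrum of $\mathbf P$. Second, even if $\mathbf Y=0$, the eigenvectors of $\mathbf X\mathbf X^T$ are the \emph{empirical} PCA directions of the rows of $\mathbf X$, not the coordinate axes fixed by (A1); comparing $\mathbf X\hat{\bm{\phi}}^{(p)}$ with $\mathbf X^{(p)}=\mathbf X\bm{\phi}^{(p)}$ (\Cref{lem:Xp_hatXp}) produces a bias $\asymp\sum_{j>p}\lambda_j\asymp p^{-(\alpha-1)}$, because the estimated directions pick up energy from the infinitely many tail coordinates. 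Neither fact follows from entrywise $O_P(n^{-1/2})$ fluctuations of $n^{-1}\mathbf X^T\mathbf X$, $n^{-1}\mathbf X^T\mathbf Y$.

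This also shows that your error accounting is inverted. Under the stated condition, the direction error you call ``the worst summand'' totals $O_P(p^{2\beta-\alpha+1}n^{-1})=o_P(p^{-(\alpha-1)})$; it would already be within budget for $p=O(n^{1/(2\beta)})$, so it can be neither the source of the rate $p^{-(\alpha-1)}$ nor the reason for the threshold $n^{1/(2\beta+\alpha)}$. Your amplification factor $(\lambda_1/\lambda_p)^{1/2}$ is an unjustified insertion that happens to reproduce the correct exponent; the weight on the $j$th estimated direction is $\hat\lambda_{j,A}^{1/2}\asymp(n\lambda_j)^{1/2}$, which \emph{shrinks} with $j$ rather than amplifying by $(\lambda_1/\lambda_p)^{1/2}$. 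In the paper, the rate $p^{-(\alpha-1)}$ comes from the tail bias $\sum_{j>p}\lambda_j$, and the condition on $p$ comes from the cross-block perturbation $n^{1/2}p^{\alpha/2}$ versus the gap $np^{-\beta}$. As written, your proposal asserts, rather than proves, exactly the two facts that constitute the theorem.
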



The main technical task in the proof is to control the difference between the true realized random vectors $(X_i,Y_i)$ and their projections on principal subspaces obtained from several approximations of the gram matrix.  Thus the tools used are similar to those in functional data analysis \citep{HallH07,Meister11}.  However, the additional challenge here is that we do not observe any of the empirical covariance matrices, and the adjacency matrix we observe is actually a noisy version of the indefinite gram matrix, which is the difference of two positive semidefinite gram matrices, one in the positive space and one in the negative space.  This issue does not exist in the ordinary functional data analysis literature and requires more delicate spectral perturbation analysis.

\subsection{Estimating the GRD}\label{subsec:est_grd}
The second part of \Cref{thm:sample_recov} gives the possibility of estimating the GRD $F$ using 
$\hat F_A^{(p)}$.  According to \Cref{thm:sample_recov}, we only need to show that $\hat F^{(p)}$ 
is close to $F$.  To this end, we consider an intermediate object $F^{(p)}$, the distribution of 
truncated vector $(X^{(p)},Y^{(p)})$ with $(X,Y)\sim F$.  The argument proceeds in two steps.

The first step is to compare $F$ and $F^{(p)}$, which is straightforward.
\begin{lemma}\label{lem:F_FP}
Under assumptions (A1-A2),
$$
d_{\rm w}(F,F^{(p)})\le c p^{-(\alpha-1)/2}\,.
$$
\end{lemma}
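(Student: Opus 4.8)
The plan is to exhibit an explicit coupling of $F$ and $F^{(p)}$ and then bound the resulting transport cost by a second-moment computation controlled by the eigenvalue decay in (A2). The natural choice is the \emph{synchronous} coupling: draw $Z=(X,Y)\sim F$ and pair it with its own truncation $Z^{(p)}=(X^{(p)},Y^{(p)})$. Since the two marginals of this joint law are exactly $F$ and $F^{(p)}$, it belongs to $\mathcal V(F,F^{(p)})$, so the definition of the Wasserstein distance immediately yields
$$d_{\rm w}(F,F^{(p)})\le \mathbb E_{Z\sim F}\|Z-Z^{(p)}\|_{\mathcal K}\,.$$

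Next I would reduce to a second moment by Jensen's inequality, $\mathbb E\|Z-Z^{(p)}\|\le (\mathbb E\|Z-Z^{(p)}\|^2)^{1/2}$. The point of passing to the square is that $\|Z-Z^{(p)}\|_{\mathcal K}^2$ splits cleanly into the dropped tail coordinates, because the $\mathcal K$-norm is the direct-sum norm and $Z-Z^{(p)}$ retains only the coordinates beyond index $p$; explicitly $\|Z-Z^{(p)}\|_{\mathcal K}^2=\sum_{j>p}X_j^2+\sum_{j>p}Y_j^2$. Taking expectations and invoking the diagonal part of (A1), namely $\mathbb E X_j^2=\lambda_j$ and $\mathbb E Y_j^2=\gamma_j$, gives
$$\mathbb E\|Z-Z^{(p)}\|^2=\sum_{j>p}(\lambda_j+\gamma_j)\,.$$

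Finally I would control this tail sum with the eigenvalue upper bound $\lambda_j\vee\gamma_j\le c_2 j^{-\alpha}$ from (A2). Comparing the sum with an integral and using $\alpha>1$ gives $\sum_{j>p}(\lambda_j+\gamma_j)\le 2c_2\sum_{j>p}j^{-\alpha}\le c'\,p^{-(\alpha-1)}$, and taking the square root produces the claimed bound $d_{\rm w}(F,F^{(p)})\le c\,p^{-(\alpha-1)/2}$.

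There is no serious obstacle here — as the text anticipates, the lemma is genuinely elementary once the synchronous coupling is selected — and the only points requiring a line of care are verifying admissibility of the coupling (it has the correct marginals) and the integral comparison for the tail. It is worth noting that only the variances $\mathbb E X_j^2$ and $\mathbb E Y_j^2$ enter this estimate, so the uncorrelatedness and off-diagonal parts of (A1) play no role in this particular bound; the nontrivial use of the identifiability normalization is deferred to the later steps comparing $F^{(p)}$ with the estimator.
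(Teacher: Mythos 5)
Your proof is correct, and it is precisely the argument the paper has in mind: the paper omits a proof of this lemma entirely, calling the step ``straightforward,'' and the synchronous coupling $Z\mapsto(Z,Z^{(p)})$ followed by Jensen, the diagonal part of (A1), and the tail-sum/integral comparison under (A2) is the intended (essentially canonical) route. Your closing observation that only the variances, not the uncorrelatedness in (A1), enter this bound is also accurate.
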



The second part is comparing the population truncated distribution $F^{(p)}$ and its empirical version $\hat F^{(p)}$.  
We apply the result of \cite{Lei18wass} which provides Wasserstein error bounds for empirical distributions.  
Here we state a special case, which is suitable for our purpose.
\begin{lemma}[Adapted from \cite{Lei18wass}]
Under assumptions (A1-A3), there exists a constant $c$ independent of $n,p$, such that
$$\mathbb E d_{\rm w}(\hat F^{(p)}, F^{(p)}) \le c n^{-1/(p \vee 2)}\left[1+(\log n) \mathbf 1(p=2)\right]\,,$$
where $\mathbf 1(\cdot)$ is the indicator function\,.
\end{lemma}

Combining the above two lemmas with \Cref{thm:sample_recov}, we have the following result on estimating the GRD.
\begin{theorem}[GRD estimation error]\label{thm:ggd-est}
Under assumptions (A1-A3), we have, when $p\ge 3$ satisfies the conditions in \Cref{thm:sample_recov},
$$
d_{\rm w}(\hat F_A^{(p)}, F)=O_P\left[n^{-\frac{\alpha-1}{4\beta}}+p^{\beta+\frac{1}{2}}n^{-1/2}+p^{-(\alpha-1)/2}+n^{-1/p}\right]\,.
$$
The right hand side is $o_P(1)$ if $p\rightarrow \infty$ and $p=o(\log n)$.
\end{theorem}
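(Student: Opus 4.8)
The plan is to prove the bound by a triangle-inequality decomposition that routes $\hat F_A^{(p)}$ to $F$ through the two intermediate distributions $\hat F^{(p)}$ (the empirical distribution of the true but hidden truncated node vectors) and $F^{(p)}$ (the population truncated distribution). Since $d_{\rm w}$ is a metric on distributions with finite first moment, I would write
$$
d_{\rm w}(\hat F_A^{(p)}, F) \le d_{\rm w}(\hat F_A^{(p)}, \hat F^{(p)}) + d_{\rm w}(\hat F^{(p)}, F^{(p)}) + d_{\rm w}(F^{(p)}, F)
$$
and control each of the three terms with a result already in hand.

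For the first term I would invoke the second conclusion of \Cref{thm:sample_recov}, giving $d_{\rm w}(\hat F_A^{(p)}, \hat F^{(p)}) = O_P(p^{-(\alpha-1)/2})$. For the third term I would apply \Cref{lem:F_FP}, giving the deterministic bound $d_{\rm w}(F^{(p)}, F) \le c\,p^{-(\alpha-1)/2}$. The middle term is the only one furnished in expectation rather than in probability: the lemma adapted from \cite{FournierG15} gives $\mathbb E\, d_{\rm w}(\hat F^{(p)}, F^{(p)}) \le c\,p\,n^{-1/(p\vee 3)}$, so a single application of Markov's inequality upgrades it to $d_{\rm w}(\hat F^{(p)}, F^{(p)}) = O_P(p\,n^{-1/(p\vee 3)})$. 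Summing the three bounds and merging the two equal-order $p^{-(\alpha-1)/2}$ terms yields the stated rate $O_P(p^{-(\alpha-1)/2} + p\,n^{-1/(p\vee 3)})$.

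It remains to verify the $o_P(1)$ claim when $p = O(\log n/\log\log n)$, which amounts to checking that both summands vanish. The first summand $p^{-(\alpha-1)/2}\to 0$ because $\alpha>1$ and $p\to\infty$. For the second summand I would observe that $p\,n^{-1/(p\vee 3)}$ is increasing in $p$ for fixed $n$ (its logarithmic derivative in $p$ equals $(p+\log n)/p^2>0$ for $p\ge 3$), so it suffices to evaluate the product at $p\asymp\log n/\log\log n$; there $\log n/p\asymp\log\log n$, whence $n^{-1/p}=\exp(-\log n/p)$ is of order $(\log n)^{-1}$ while $p$ grows only as $\log n/\log\log n$, so the product is of order $1/\log\log n=o(1)$. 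This rate balancing—trading the decreasing truncation-bias term $p^{-(\alpha-1)/2}$ against the increasing empirical-approximation term $p\,n^{-1/(p\vee 3)}$—is the only genuinely quantitative step; everything else is an assembly of the three cited bounds through the triangle inequality, so I do not anticipate a substantial obstacle.
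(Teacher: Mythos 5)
Your proposal is correct and is essentially the paper's own proof: the paper derives \Cref{thm:ggd-est} exactly by combining the second conclusion of \Cref{thm:sample_recov}, \Cref{lem:F_FP}, and the lemma adapted from Fournier--Guillin through the triangle inequality for $d_{\rm w}$, just as you do, with Markov's inequality silently upgrading the expectation bound to an $O_P$ bound. The only caveat is in the $o_P(1)$ step, where your evaluation of $p\,n^{-1/(p\vee 3)}$ at $p\asymp \log n/\log\log n$ implicitly takes the implied constant to be at most one (for a constant $C>1$ one gets $C(\log n)^{1-1/C}/\log\log n$, which diverges) --- but this looseness is inherited from the paper's own statement, not a defect of your argument.
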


This result seems to suggest that one must have $p\rightarrow\infty$ to have a vanishing error.
The term $p^{-(\alpha-1)/2}$ comes from \Cref{lem:F_FP} which corresponds to the truncation error.  It is necessary only because we assumed a particular eigenvalue sequence in Assumption (A2).
What really matters here is the sum of absolute eigenvalues beyond $p$: $(\lambda_j,\gamma_j:j> p)$.  When the graphon
is nearly low rank or the GRD is supported close to a finite dimensional space, there is no need to use a large value of $p$.

\subsection{Estimation for sparse graphs}\label{sec:sparse}
One limitation of the theoretical framework of exchangeable random graphs is that they can only model dense graphs.  The total number of edges in $\mathbf A_n$ will concentrate around $n^2\int_{[0,1]^2}W$.  In reality the number of edges in a network rarely grows as the squared number of nodes.  Therefore, sparse networks are of greater practical interest.  To this end, for a given graphon $W$ and a node sample size $n$ one can consider adding a ``sparsity parameter'' to the network sampling scheme \citep{BickelC09,BickelCL11,Wolfe13,XuML14,KloppTV17}:
$$\mathbf A_{n,i,j}\sim {\rm Bernoulli}(\rho_n W(s_i,s_j))\,,~~\forall~1\le i<j\le n\,.$$

This sparsity parameter can be carried over to the graph root sampling scheme.  Let $F$ be a GRD. For a node sample size $n$ and sparsity parameter $\rho_n$, the corresponding sparse graph root sampling scheme is equivalent to generating node sample points from a scaled distribution:
\begin{equation}\mathbf A_{n,i,j}\sim {\rm Bernoulli}(\langle\rho_n^{1/2}Z_i,\rho_n^{1/2}Z_j\rangle_{\mathcal K})\,,\label{eq:sparse_krein}\end{equation}
  where $Z_i\stackrel{iid}{\sim}F$.
For notational simplicity, for scalar $a$ and distribution $F$ we use $a F$ to denote the distribution obtained by scaling the distribution $F$ by a factor of $a$: $Z\sim F\Leftrightarrow aZ\sim aF$.

In the SBM and DCBM literature, it is well known that consistent estimation of network communities is possible only if $n\rho_n\rightarrow \infty$.
  Our estimation theory developed in the previous subsections can be extended to cover sparse sampling schemes.  One technical challenge is that when $n\rho_n=o(\log n)$, spectral methods tend to be sensitive to overly large node degrees.  In the spectral clustering literature \citep{Coja-Oghlan10,ChinRV15} a common approach is to zero out rows and columns of $\mathbf A_n$ for which the degrees are too high. Some data-driven degree thresholding rules are developed, for example, in \cite{Sharmo18,GaoMZZ18}. In the following we consider an adaptive trimmed spectral embedding method.

  Let $d_i=\sum_{1\le j\le n}\mathbf A_{n,i,j}$ be the degree of node $i$. 
Let $I_n$ be the set of nodes whose degrees are among the $\left\lfloor \frac{n(n-1)}{\sum_{i=1}^n d_i} \right\rfloor$ largest, and $\tilde{\mathbf A}_{n}$ be the adjacency matrix obtained by zeroing out the columns and rows in $I_n$.  Let $\tilde{\mathbf Z}_A^{(p,p)}$ and $\tilde F_A$ be the corresponding embeddings and GRD estimate defined in \Cref{subsec:estimator} and \Cref{thm:sample_recov}, respectively, with $\mathbf A_n$ replaced by $\tilde{\mathbf A}_n$.
\begin{theorem}\label{thm:sparse}
Under assumptions (A1-A3), assuming sparse sampling scheme \eqref{eq:sparse_krein}, the following hold.
\begin{enumerate}
\item If $n\rho_n\rightarrow \infty$ and
$
p_1=p_2=p=o\left[n^{1/(2\beta+\alpha)}\wedge (n\rho_n)^{1/(2\beta)}\right]
$ then \begin{align*}
  & n^{-1}\|\rho_n^{-1/2}\tilde{\mathbf Z}_A^{(p,p)}-\mathbf Z^{(p,p)}\|_F^2\\
  =&O_P\left(p^{2\beta-\alpha+1}(n\rho_n)^{-1}+ n^{-\frac{\alpha-1}{2\beta}}+p^{2\beta+1}n^{-1}\right)\,.
\end{align*}
\item If in addition we assume $p\ge 3$ then
\begin{align*}
& d_{\rm w}(\rho_n^{-1/2}\tilde F_A, F)\\
=&O_P\left(p^{\beta-(\alpha-1)/2}(n\rho_n)^{-1/2}+ n^{-\frac{\alpha-1}{4\beta}}+p^{\beta+1/2}n^{-1/2}+p^{-\frac{\alpha-1}{2}} +n^{-1/p}\right)  \,,
\end{align*}
where the error bound is $o_P(1)$ if $p\rightarrow 
\infty$ and $p=o(\log n \wedge (n\rho_n)^{1/(2\beta)})$.
\end{enumerate}
\end{theorem}

\paragraph{Comparison to graphon estimation using spectral methods.} Graphon estimation using singular value thresholding has been considered in \cite{Chatterjee14,Xu17,KloppV19}. For specificity of discussion we focus on \cite{Xu17}.  The method first performs a singular value decomposition of the adjacency matrix $\mathbf A_n$, and keeps only the components whose singular values exceed a threshold $\tau$.  Then the remaining low-rank approximation is multiplied by $\rho_n^{-1}$ and entry-wise trimmed to $[0,1]$ to obtain $\hat{\mathbf G}_n$ as an approximation of the probability matrix $\mathbf G_n=\rho_n^{-1}\mathbb E \mathbf A_n$, which can be further used as a piecewise constant approximation to the graphon. The error rates reported in \cite{Xu17} and discussed here refer to the probability matrix estimation error.

We first explain the difference  between the GRD estimation problem and probability matrix estimation problem.  In GRD estimation, an intermediate error quantity is the empirical distribution approximation error $n^{-1} \sum_{i=1}^n\|\hat Z_i-Z_i\|^2$, where $Z_i$'s are the true latent vectors sampled from the underlying GRD and $\hat Z_i$'s are the estimated versions. The total GRD estimation error bound needs to add another term due to the empirical distribution approximation.
On the other hand, the probability matrix estimation is concerned with the error metric $n^{-2}\|\hat{\mathbf G}_{n}-\mathbf G_{n}\|_F^2$.  Assuming that the GRD estimation and spectral probability matrix estimation use the same rules to select significant eigen components, and ignoring the trimming step in obtaining $\hat{\mathbf G}_n$, we have $\hat{\mathbf G}_{n,i,j}=\langle \hat Z_i, \hat Z_j \rangle_{\mathcal K}$. Using Cauchy-Schwartz we obtain
\begin{align}
&n^{-2}\|\hat{\mathbf G}_{n}-\mathbf G_{n}\|_F^2\nonumber\\
\le & 2\left[n^{-1}\sum_{i=1}^n\|\hat Z_i-Z_i\|^2\right]\left[n^{-1}\sum_{i=1}^n\|Z_i\|^2+n^{-1}\sum_{i=1}^n \|\hat Z_i\|^2\right]\,.  \label{eq:upperbound_comp}
\end{align}
If we accept the assumption that $n^{-1}\sum_{i=1}^n \|\hat Z_i\|^2$ is close to $n^{-1}\sum_{i=1}^n \|Z_i\|^2\approx \mathbb E\|Z_1\|^2$, then the GRD empirical distribution approximation error provides an upper bound of the probability matrix estimation error, up to a multiplicative factor.

Our requirement of polynomially decaying eigenvalues (first part of Assumption A2) implies the tail sum condition of eigenvalue sequence in \cite{Xu17}. In addition, we require a lower bound of the eigenvalue gap (second part of Assumption A2), which is not required in \cite{Xu17}.  This is because in GRD estimation we need to recover the eigenvectors, and the use of subspace perturbation theory (Davis--Kahan $\sin \Theta$ theorem) involves a multiplicative factor of the inverse of eigenvalue gap near the threshold.  This multiplicative factor of inverse eigenvalue gap leads to a slower convergence rate.  When $\lambda_j=c j^{-\alpha}$ for some $\alpha>1$, Assumption A2 holds with $\beta=\alpha+1$.  In the moderately sparse case $n\rho_n$ is only a polynomial of $\log n$, then \Cref{thm:sparse} implies a GRD empirical distribution estimation error rate of $(n\rho_n)^{-\frac{a-1}{2(a+1)}}$, while the probability matrix estimation error rate in \cite{Xu17} is $(n\rho_n)^{-\frac{2a-1}{2a}}$.
In \Cref{sec:data_2} we will empirically observe that when the underlying graphon is low rank (e.g., a stochastic block model with a small number of blocks), then the GRD empirical distribution approximation error and probability matrix estimation error roughly differ by a multiplicative factor; and when the underlying graphon is high rank, such as a graphon whose eigenvalues decay polynomially, then the GRD empirical distribution approximation error exhibits a slower rate of convergence than the corresponding probability matrix estimation error.


\subsection{Choice of embedding dimensions}\label{sec:choose_p}
In practice, the value of $p$ can affect the quality of the estimated GRD. If $p$ is too small, the estimate may not have sufficient dimensionality to carry all useful structures in the GRD.  If $p$ is too large, the estimation becomes less stable and there would be a waste on computing and storage resources.  Moreover, in many applications it may make sense to use different values of $p_1$ and $p_2$, since the effective dimensionality can be different for the positive and negative components as seen in the numerical examples in \Cref{sec:data}.

One potential way of choosing $(p_1,p_2)$ is to follow a common practice in functional data analysis, where one chooses the leading principal subspace that explain a certain fraction (such as $90$\%) of the total variance. In network data, this approach has limited success due to the low-rank and high-noise nature of the adjacency matrix. Real-world network data often have low rank structures, but are observed with an entry-wise Bernoulli noise.  

Another way to choose $p_1,p_2$ is singular value thresholding \citep{Chatterjee14}, where $p_1$ and $p_2$ are the number of positive and negative eigenvalues whose absolute value exceeds a threshold, respectively.  In particular, \cite{Chatterjee14} suggests the threshold $2.01\sqrt{n\bar\sigma^2}$, where $\bar\sigma^2\ge \max_{i,j}{\rm Var}(A_{ij})$ is an upper bound of the maximum entry-wise variance of the adjacency matrix.
When $\bar\sigma^2$ is unavailable, one may use the conservative bound ${\rm Var}(A_{ij})\le 1/4 \equiv \bar\sigma^2$, which results in the conservative singular value threshold $1.005 \sqrt{n}\approx \sqrt{n}$.
We find this simple rule of singular value thresholding working quite well for some real data sets.  


\section{Numerical examples}\label{sec:data}
\subsection{Simulation 1: dense SBM, DCBM, and MMBM}\label{sec:data_1}
We apply the truncated weighted spectral embedding for SBM, DCBM, and MMBM. For comparison, we also apply the graphon estimation method based on stochastic block model approximation (SBA) \citep{Airoldi13}.  

The GRD estimator $\hat F$ and the graphon estimator $\hat W$ are in different spaces, where $\hat F$ is a probability measure on an Euclidean space and $\hat W$ is a symmetric function defined on the unit square.  Thus they may reveal different underlying structures about the network data.  We shall see that the GRD estimator is able to reveal the clustering and subspace clustering of the network nodes, while the graphon estimator is less visually informative without a correct ordering of the nodes.  Estimation errors and convergence rates in more general settings, including sparse networks and infinite dimensional GRDs, are considered in \Cref{sec:data_2}.

Following the notations in \Cref{sec:interpretation}, we set $k=3$ and
$B$ given in \eqref{eq:B}.
The remaining parameters are set as follows.
\begin{itemize}
  \item $\pi=(1/3,1/3,1/3)$ for the SBM and DCBM. 
  \item $\Theta={\rm Unif}(0.7,1.4)$ for the DCBM, so the effects of node activeness parameter $\theta_i$'s range from halving to doubling the corresponding SBM edge probabilities.
  \item $a=(0.5,0.5,0.5)$ for the MMBM, so that the mixed memberships are not too close to the extreme points. 
\end{itemize}
For each model, we generate a random graph with $n=1000$ nodes, and apply the truncated weighted spectral embedding.  The number of eigen-components is determined by the singular value thresholding rule as described in \Cref{sec:choose_p} with threshold $\sqrt{n}$, which chooses top two absolute eigenvalues in all three cases, with one positive component and one negative component.  The smooth graphon estimation method requires two independent realizations of the adjacency matrix on the same set of nodes.  To make it a fair comparison, we generate two adjacency matrices of size $708\times 708$ to use in the smooth graphon estimation algorithm, so that the number of independent observations is the same as a $1000\times 1000$ adjacency matrix.



A typical output of GRD estimation and the SBA algorithm are visualized in \Cref{fig:sbm}, \Cref{fig:dcbm}, and \Cref{fig:mmbm} for the SBM, DCBM, and MMBM, respectively. The SBA algorithm requires a tuning parameter $\delta$ for grouping similar nodes. Here we use $\delta=0.1$ since it achieves the smallest probability matrix estimation error.
In each figure, the left plot shows the truncated and weighted spectral embedding of network nodes.  The red dots, line segments, and triangles are corresponding supports of the true GRD as theoretically predicted in \Cref{sec:interpretation}. 
The embedded empirical distributions exhibit reasonable approximations to the underlying graph root distributions.  On the other hand, the graphon estimation method SBA outputs an estimated probability matrix.

\begin{figure}
  \begin{center}
    \includegraphics[scale=0.45]{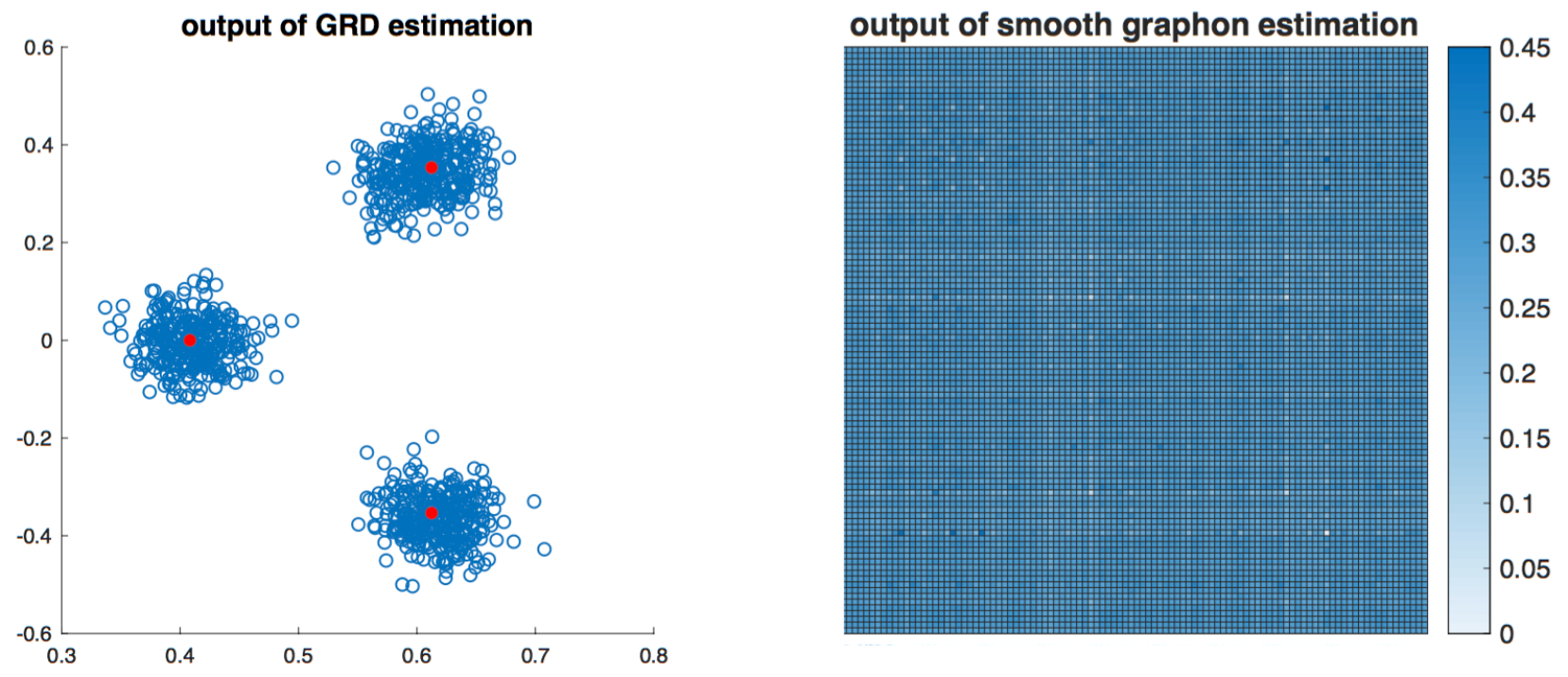}
    \caption{Simulation 1, SBM. Left: truncated and weighted spectral embedding output by the GRD estimation algorithm.  The red dots are the point masses theoretically predicted in \Cref{sec:interpretation}. Right: heatmap of estimated probability matrix output by the smooth graphon estimation algorithm with original output node ordering. The heatmap is shown at a lower resolution ($1:7$) for better visibility.\label{fig:sbm}}
  \end{center}
\end{figure}

\begin{figure}
  \begin{center}
    \includegraphics[scale=0.45]{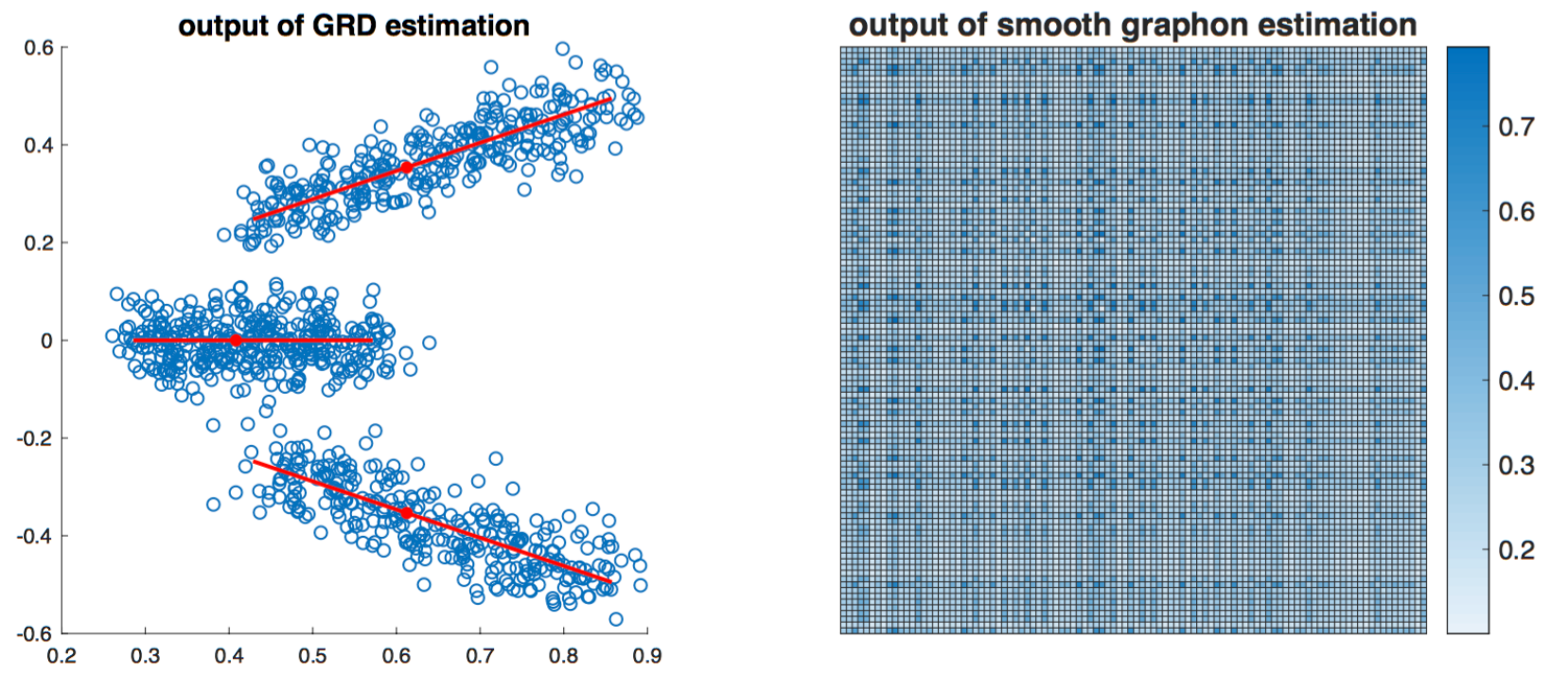}
    \caption{Simulation 1, DCBM. Left: truncated and weighted spectral embedding output by the GRD estimation algorithm.  The red line segments are the subspace clusters theoretically predicted in \Cref{sec:interpretation}. Right: heatmap of estimated probability matrix output by the smooth graphon estimation algorithm with original output node ordering. The heatmap is shown at a lower resolution ($1:7$) for better visibility.\label{fig:dcbm}}
  \end{center}
\end{figure}

\begin{figure}
  \begin{center}
    \includegraphics[scale=0.45]{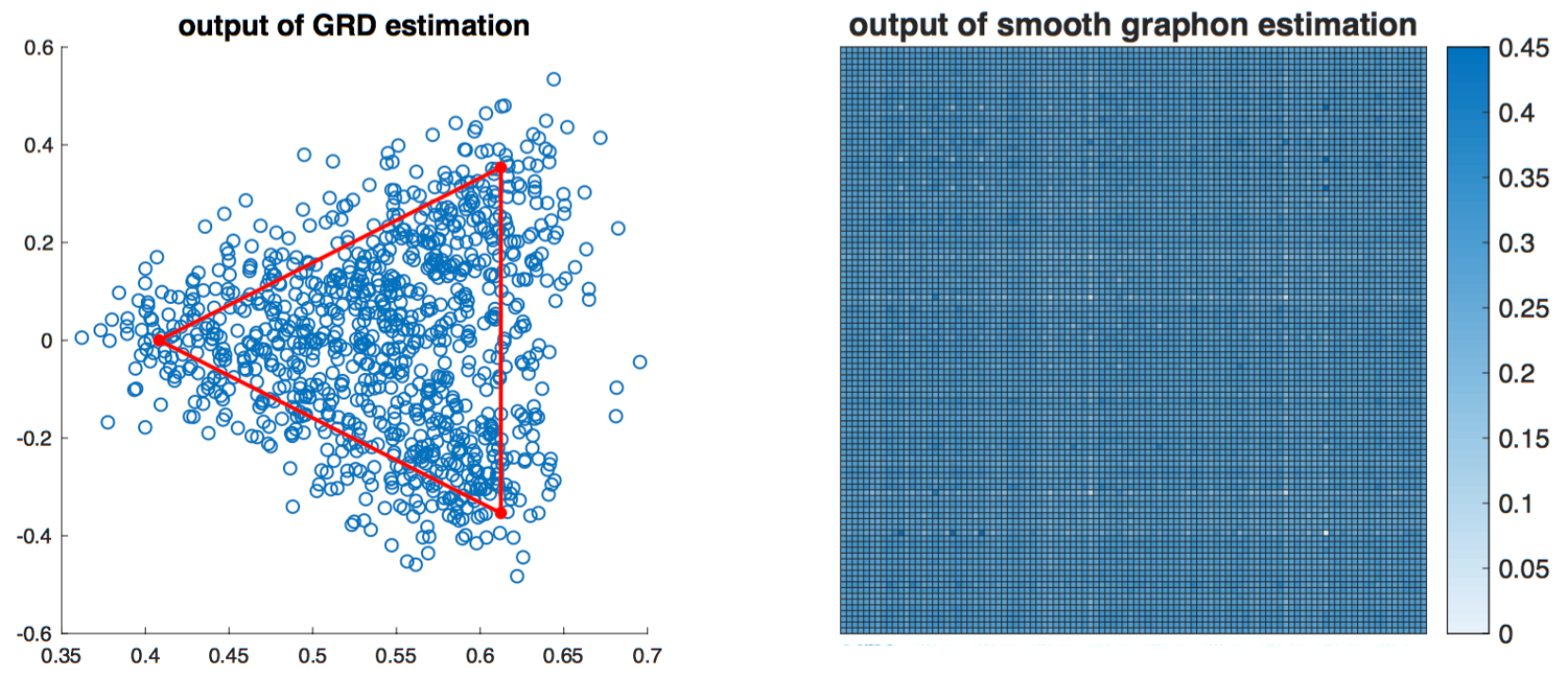}
    \caption{ Simulation 1, MMBM. Left: truncated and weighted spectral embedding output by the GRD estimation algorithm.  The red triangle is the convex polytope theoretically predicted in \Cref{sec:interpretation}. Right: heatmap of estimated probability matrix output by the smooth graphon estimation algorithm with original output node ordering. The heatmap is shown at a lower resolution ($1:7$) for better visibility.\label{fig:mmbm}}
  \end{center}
\end{figure}

\subsection{Simulation 2: sparse and infinite dimensional graphons.}\label{sec:data_2}
In this simulation study we demonstrate GRD estimation in sparse and infinite dimensional settings, and compare with the simulation results in corresponding graphon estimation using singular value thresholding (USVT, \cite{Chatterjee14,Xu17}).  We adopt two simulation settings in \cite{Xu17}: a stochastic block model with four communities and a smooth graphon.

\paragraph{Stochastic block model.}
In the stochastic block model setting, we consider stochastic block models with $k=4$ equal sized communities, and the $B$ matrices have randomly generated entries from the uniform distribution on $[0,1]$ subject to symmetry. We consider four different values of $\rho$: $0.4,~0.2,~0.1,~0.05$, and six values of $n$ such that $\log (n\rho/k)$ takes equally spaced values between $2.2$ and $3.2$. For each combination of $(n,\rho)$ the simulation is repeated $30$ times with independently generated $B$, community membership, and $\mathbf A_n$.  The singular value thresholding algorithm for probability matrix estimation uses threshold $2.01\sqrt{n\rho}$.  This is to make sure we can reproduce the results in \cite{Xu17}.  For GRD estimation, we choose $p_1$ and $p_2$ by thresholding the absolute eigenvalues at $2.01\sqrt{n\rho(1-\rho)}$, following the suggestion in \cite{Chatterjee14}.
\begin{figure}
  \begin{center}
    \includegraphics[scale=0.5]{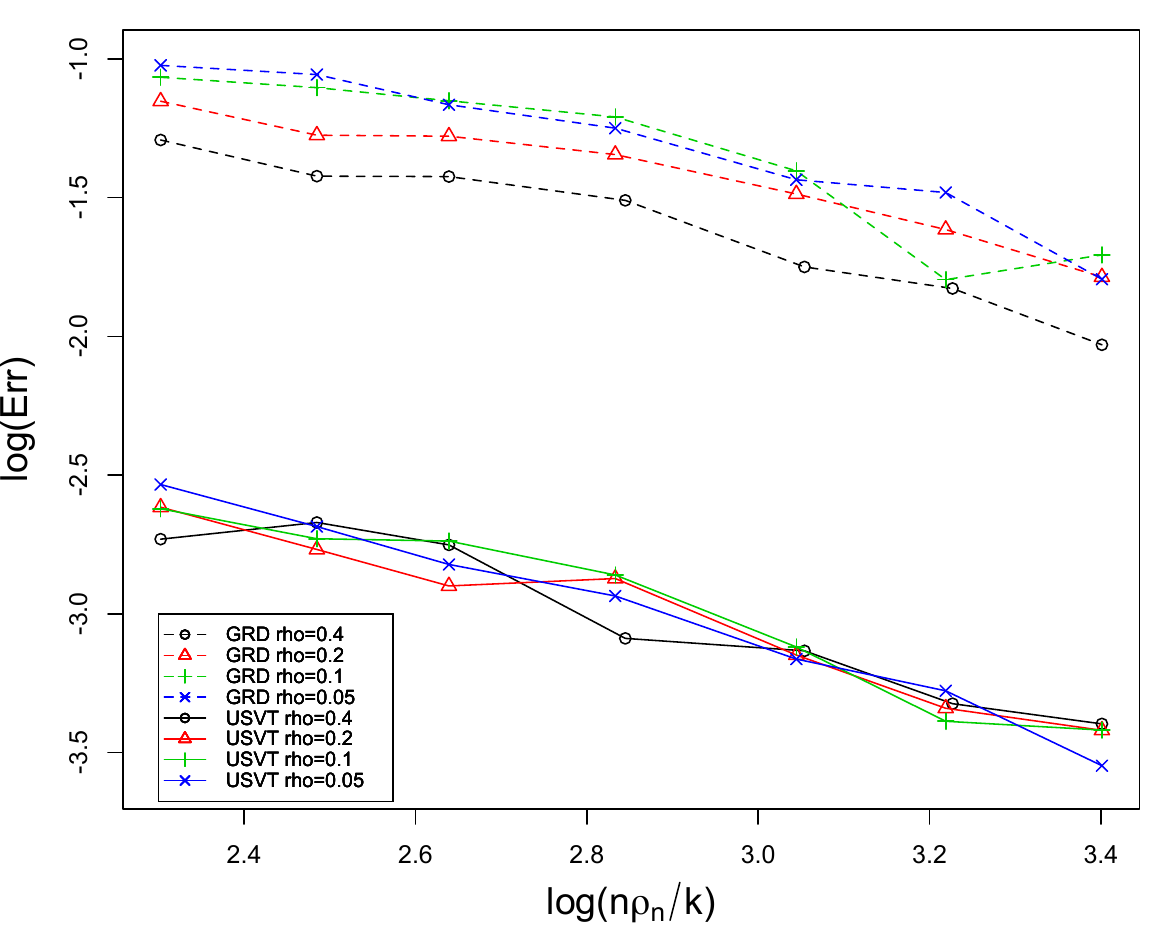}
    \caption{Simulation 2: SBM. Logarithm of estimation error as function of logarithm of signal strength in stochastic block model with $k=4$. \label{fig:sbm_rev2}}
  \end{center}
\end{figure}
The results are summarized in \Cref{fig:sbm_rev2}.  The error metrics reported here are empirical GRD approximation error and the probability matrix estimation error as introduced in \Cref{sec:sparse}.  The similar slopes between empirical GRD errors and probability matrix estimation errors seem to suggest that in this low-dimensional case, the two estimation errors roughly differ by a constant factor.

\paragraph{Smooth graphon.} In the smooth graphon setting, we use $W(x,y)=\min(x,y)$, whose $j$th eigenvalue is $\frac{4}{\pi^2(2j-1)^2}$. We consider the same values of $\rho$: $0.4$, $0.2$, $0.1$, $0.05$.  Given the small eigenvalues, we consider larger values of $\log(n\rho_n)$, which are equally spaced between $4$ and $8.5$.  Due to the computer memory limit, we carry out the experiment when $n<1.5\times 10^4$. That is, for $\rho=0.4$ the experiment covers $n$ such that $\log (n\rho)\in [4,8.5]$; For $\rho=0.2$ it covers $\log(n\rho)\in[4,8]$; For $\rho=0.1$ it covers $\log(n\rho)\in[4,7]$; For $\rho=0.05$ it covers $\log(n\rho)\in[4,6.5]$.  The results are summarized in the left plot of \Cref{fig:smooth}.  The plot seems to confirm a slower rate of convergence for GRD estimation error.

The estimation errors of both the empirical GRD and probability matrix exhibits a sharp drop when $\log(n\rho)\approx 7$.  To better understand this we decompose the total estimation error into two parts.
\begin{enumerate}
  \item The finite dimensional estimation error: This is the error in approximating the low-rank component of the probability matrix $\mathbf G_n=\rho_n^{-1}\mathbb E \mathbf A_n$.  
  For empirical GRD estimation, this corresponds to $n^{-1}\sum_{i=1}^n\|\rho^{-1/2}\hat{Z}_{i,A}^{(p_1,p_2)}- Z_i^{(p_1,p_2)}\|^2$, where $\hat{Z}_{i,A}^{(p_1,p_2)}$ is the estimated latent vector $Z_i$ truncated to retain $p_1$ and $p_2$ coordinates in the positive and negative parts, respectively, as given in \eqref{eq:sample_point_est}. For probability matrix estimation, this corresponds to 
  $n^{-2}\|\hat{\mathbf G}_n-\mathbf G_n^{(p)}\|_F^2$, where $\mathbf G_n^{(p)}$ is the best rank-$p$ approximation to $\mathbf G_n$ in Frobenius norm, and $p$ is the number of singular values used in the USVT method.
  \item The truncation error: This is the error incurred by ignoring the eigen-components of $\mathbf G_n$ with smaller absolute eigenvalues.  In empirical GRD estimation, this error is $n^{-1}\sum_{i=1}^n\|\mathbf Z_i-\mathbf Z_i^{(p_1,p_2)}\|^2$.  In probability matrix estimation, this error is $n^{-2}\|\mathbf G_n-\mathbf G_n^{(p)}\|_F^2$.
\end{enumerate}

\begin{figure}
  \begin{center}
    \includegraphics[scale=0.43]{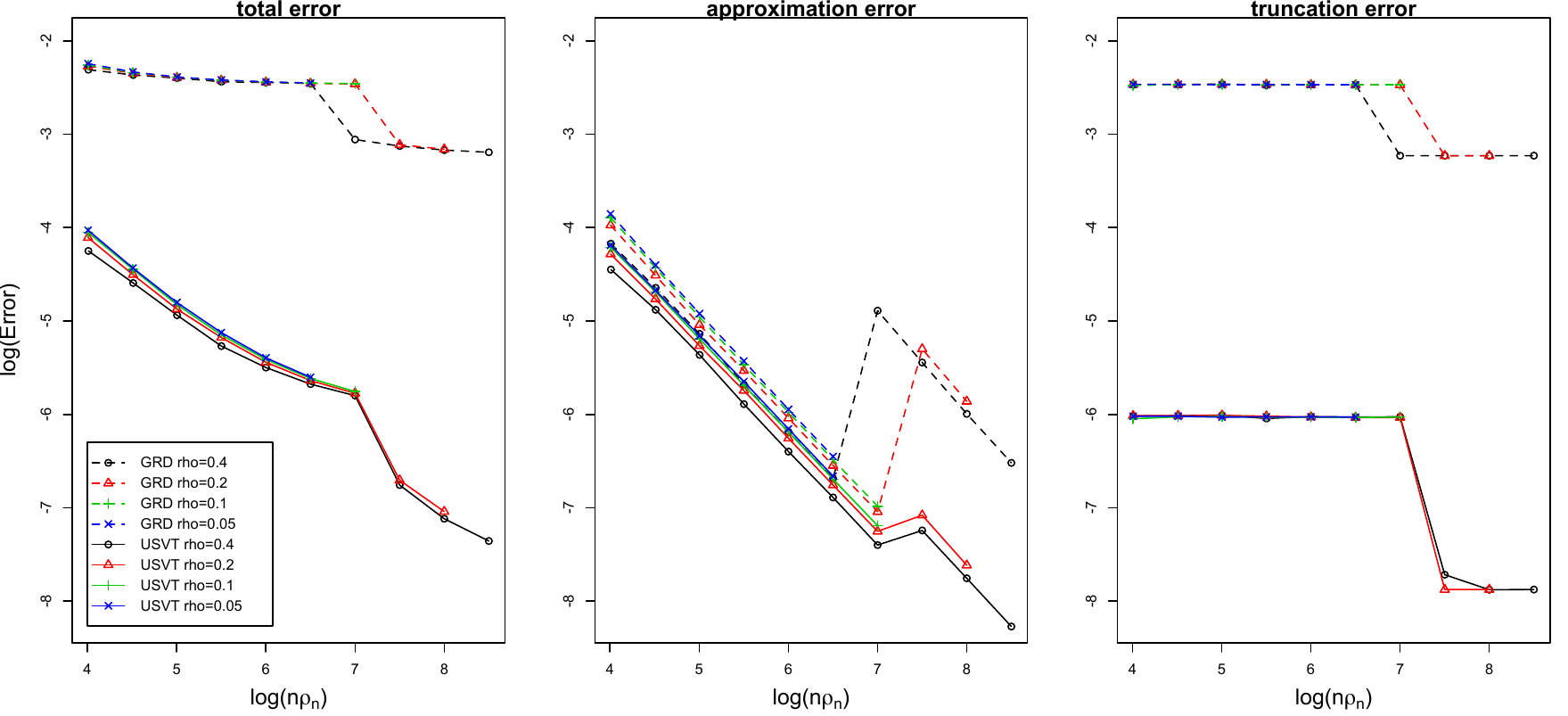}
    \caption{Simulation 2, smooth graphon. Logarithm of estimation error as a function of logarithm of signal strength in smooth graphon. Left: total estimation error; middle: finite dimensional estimation error; right: truncation error.\label{fig:smooth}}
  \end{center}
\end{figure}

The finite dimensional error and truncation error are plotted in the middle and right plots in \Cref{fig:smooth}, respectively.  Near the point $\log (n\rho)=7$, the signal becomes strong enough to pick up the second eigenvalue of the underlying probability matrix, therefore the finite dimensional approximation error increases for both methods, because there are more eigen components to estimate.  After this increase, the finite dimensional approximation errors start dropping again with a similar linear slope.  The behavior of the truncation error matches the intuition, as it stays constant until a new eigen component is picked up when the signal strength increases.  In this example, the truncation error is larger and decays more slowly for the empirical GRD estimation than for the probability matrix estimation. 

\subsection{The political blogs data}\label{sec:data_3}
The political blogs data \cite{PolBlog} is one of the most widely studied network data sets with a well-believed degree-corrected community structure \citep{KarrerN11,Jin12,ZhaoLZ12,lei2014goodness,ChenL_NCV}.  The data set records undirected hyperlinks among $1222$ political blogs during the 2004 presidential election, and the nodes have been manually classified as ``liberal'' and ``conservative''.

Among many statistical methods applied to this data set, spectral methods are quite popular and have used the top two singular vectors of the adjacency matrix.  Here we apply the truncated and weighted spectral embedding to this data set.  The singular value thresholding rule suggests two significant eigen-components, both of which correspond to the positive component.  The embedded nodes in the two-dimensional \krein space reflects a mixture of two components each on a one dimensional subspace, with each mixture component corresponding to a labeled class.  For this data set we only have one realization of the adjacency matrix so the SBA algorithm is no longer applicable.  For comparison, we apply the sorting-and-smoothing (SAS) estimator developed by \cite{ChanA14}, which adapts the SBA method by sorting the nodes according to the degrees.  We also apply the USVT method to estimate the probability matrix, with the singular value threshold $1.005\sqrt{n}$.

The results are visualized in \Cref{fig:blog}, in a similar fashion as in the simulated examples in \Cref{sec:data_1}.  The GRD node embedding scatter plot is colored according to the ground truth of manual labeling of the blogs. It clearly shows that each group is represented by a one-dimensional subspace on which the GRD is supported.  The SAS  estimator sorts the nodes according to the degrees, and misses the subspace clustering hidden in the data.  The USVT probability matrix estimation output is similar to that of SAS, with a different but random sorting of the nodes.

\begin{figure}
  \begin{center}
    \includegraphics[scale=0.45]{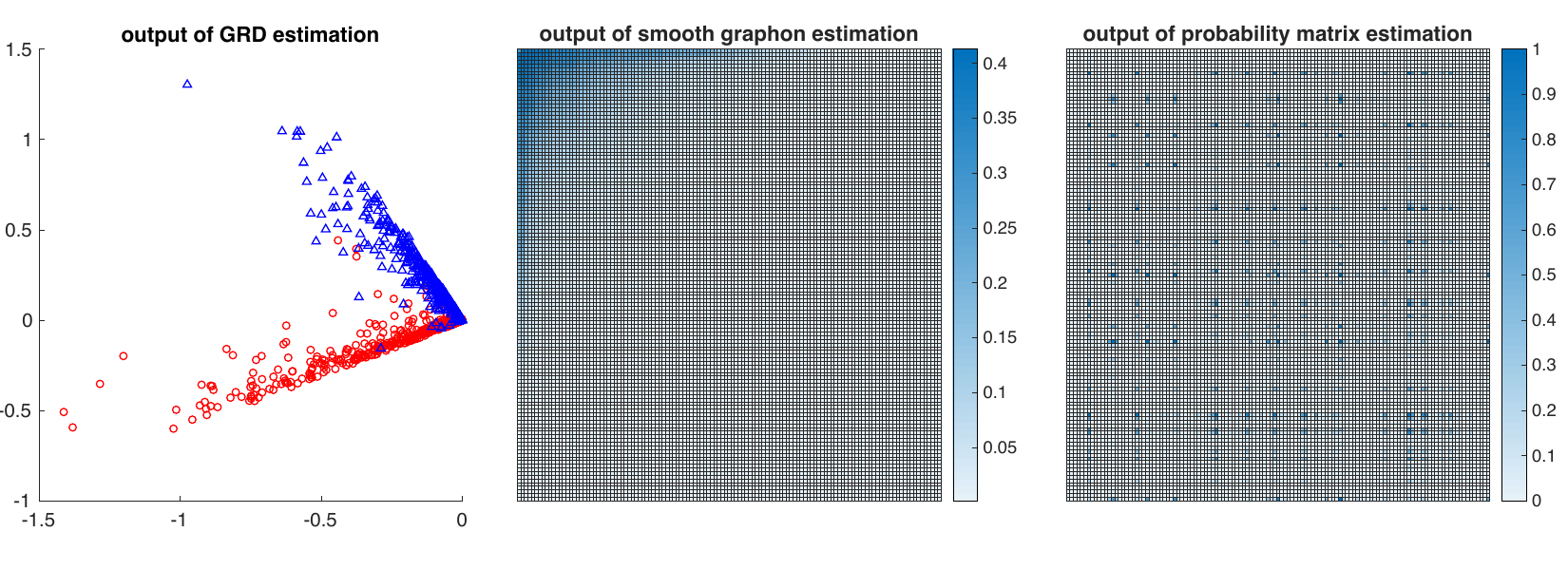}
    \caption{Political blogs data. Left: node embedding output by the GRD estimation method, colored by the ground truth manual labeling. Middle: estimated probability matrix with nodes sorted by the sorting-and-smoothing algorithm.  Right: estimated probability matrix using USVT with random node ordering. The heatmaps are shown at a lower resolution ($1:10$) for better visibility.\label{fig:blog}}
  \end{center}
\end{figure}

\subsection{The political books data}\label{sec:data_4}
The political books data records undirected links among $105$ political books with links defined by the co-purchase records on \texttt{Amazon.com}.  This data set, available on Mark Newman's website\footnote{\url{http://www-personal.umich.edu/~mejn/netdata/}}, was collected by Krebs \citep{Krebs04} during the 2004 presidential election.  The nodes have been manually labeled as one of the three categories: ``neutral'', ``liberal'', and ``conservative''.

Given the three labeled classes, it seems natural to assume three significant eigen-components.  However, the singular value thresholding rule indicates only two significant components, both with positive eigenvalues.  Again, we also apply the SAS algorithm and the USVT probability matrix estimator with the threshold $1.005\sqrt{n}$ to this data set.

As shown in \Cref{fig:book},
the truncated weighted spectral embedding of the first two components (left plot of \Cref{fig:book}) shows a two-component mixture with each component supported on a one-dimensional subspace, which strongly indicates a two-block DCBM.  The ``neutral'' class, plotted as green square points, appears near the intersection of the other two classes.  The SAS estimator and the USVT probability matrix estimator do not explicitly indicate such subspace clustering structure.

\begin{figure}
  \begin{center}
    \includegraphics[scale=0.45]{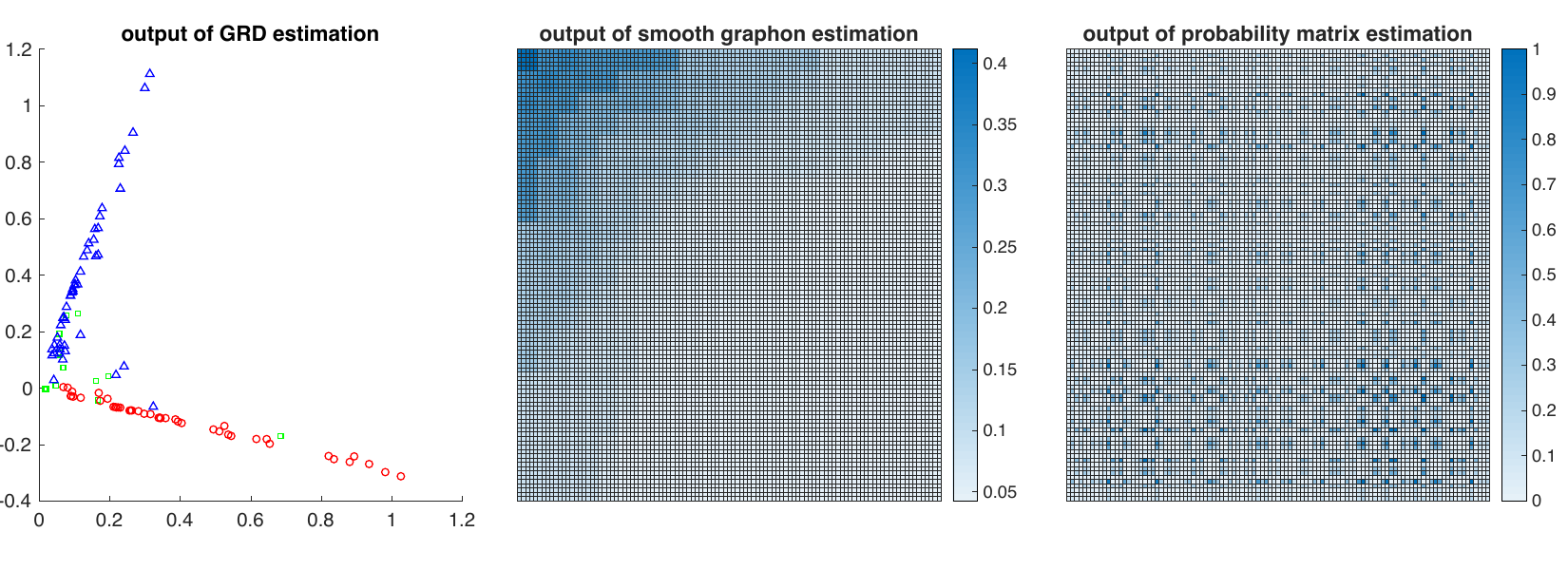}
    \caption{Political books data. Left: node embeddings output by the GRD estimation method, colored by the ground truth manual labeling. Middle: estimated probability matrix with nodes sorted by the sorting-and-smoothing algorithm. Right: estimated probability matrix using USVT with random node ordering.\label{fig:book}}
  \end{center}
\end{figure}

\section{Discussion}\label{sec:discussion}

\paragraph{Kernel based learning}  A side result of our theory is the relationship between the generating distribution of a random sample and the distribution of the corresponding kernel/gram matrix. Let $F$ be a probability measure on a separable Hilbert space $\mathcal X$.  Let $(X_i:i\ge 1)$ be a sequence of independent samples from $F$, and $\mathbf G=(\langle X_i, X_j \rangle,~i,j\ge 1)$ be the (infinite size) gram matrix.  The perspective of viewing $\mathbf G$ as an exchangeable random array allows us to establish the correspondence between $F$ and the distribution of $\mathbf G$.  The result essentially says that the gram matrix carries all information about $F$ up to an orthogonal transform.  We believe that this result is elementary and highly intuitive, but are not able to find it in the literature.
\begin{corollary}\label{cor:kernel}
Let $F$ be a probability measure on a separable Hilbert space $\mathcal X$.  Denote $\mathcal G_F$ the distribution of the corresponding infinite gram matrix $\mathbf G$.  Then for two probability measures $F_1$, $F_2$ on $\mathcal X$, $\mathcal G_{F_1}=\mathcal G_{F_2}$ if and only if $F_1\stackrel{o.t.}{=}F_2$, provided that one of the following holds:
\begin{enumerate}
  \item $\mathcal X$ is finite dimensional;
  \item $\mathbb E_{X\sim F_1}\|X\|^2<\infty$;
  \item $\mathbb E_{X\sim F_2}\|X\|^2<\infty$.
\end{enumerate} 
\end{corollary}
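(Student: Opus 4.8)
The statement splits into an easy forward implication and a substantive reverse one, and the plan is to view the gram matrix $\mathbf G=(\langle X_i,X_j\rangle)$ as a real-valued, \emph{dissociated} jointly exchangeable array and route everything through the machinery behind \Cref{thm:unique}. The forward direction is immediate: if $F_1\stackrel{o.t.}{=}F_2$ with an orthogonal $Q$ on $\mathcal X$ satisfying $X\sim F_1\Rightarrow QX\sim F_2$, then $\langle QX_i,QX_j\rangle=\langle X_i,X_j\rangle$ holds pathwise, so the two gram arrays coincide in law and $\mathcal G_{F_1}=\mathcal G_{F_2}$. Since $\mathcal X$ is a Hilbert space there is no negative part, so the $Q_-$ of \Cref{def:equiv_ot} is vacuous and a single orthogonal transform suffices throughout.

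For the reverse direction under the moment hypothesis, I would first record a reduction that makes the asymmetry between conditions (2) and (3) harmless: the diagonal entries satisfy $G_{ii}=\|X_i\|^2$, so $\mathcal G_{F_1}=\mathcal G_{F_2}$ forces the law of $\|X\|^2$ to agree under $F_1$ and $F_2$; in particular $\mathbb E_{F_1}\|X\|^2=\mathbb E_{F_2}\|X\|^2$, and whichever of (2),(3) is assumed upgrades to \emph{both} measures being square-integrable. Next I would represent each $F_i$ through an ITS map $Z_i:[0,1]\to\mathcal X$ (\Cref{pro:its}) and form the positive-semidefinite kernel $\kappa_i(s,t)=\langle Z_i(s),Z_i(t)\rangle$; the gram matrix generated by $F_i$ is exactly the dissociated array with kernel $\kappa_i$, now carrying \emph{no} idiosyncratic Bernoulli layer. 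The same Kallenberg representation and uniqueness invoked for \Cref{thm:unique} then applies and shows that $\mathcal G_{F_1}=\mathcal G_{F_2}$ implies $\kappa_1$ and $\kappa_2$ are weakly isomorphic. Since square-integrability gives $\int_0^1\kappa_i(s,s)\,ds=\mathbb E_{F_i}\|X\|^2<\infty$, each $\kappa_i$ is trace-class and admits a convergent spectral decomposition with nonnegative eigenvalues only (the kernels are PSD). Running the spectral-matching step of \Cref{thm:unique} in this positive-only setting then identifies the eigen-coordinate representations up to one orthogonal transform, i.e. $F_1\stackrel{o.t.}{=}F_2$.

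The finite-dimensional case (1) I would treat separately and more elementarily, since no moment assumption is available and $\kappa_i$ may fail to be trace-class. For $\mathcal X=\mathbb R^d$, the gram matrix of any $m\ge d+1$ points in general position determines those points up to a single global element of $O(d)$ (classical multidimensional-scaling rigidity), so the empirical measure $\hat F_n^{(i)}=\tfrac1n\sum_{k\le n}\delta_{X_k}$ is determined, as a random measure, only up to $O(d)$, and its law modulo $O(d)$ is read off from $\mathcal G_{F_i}$. By the law of large numbers $\hat F_n^{(i)}\to F_i$ weakly a.s.\ to a \emph{deterministic} limit; since the laws modulo $O(d)$ agree for every $n$ and the limits are deterministic, the realization-dependent orthogonal ambiguity collapses and forces $F_1=Q_{\#}F_2$ for a single fixed $Q\in O(d)$.

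The main obstacle is the infinite-dimensional reverse direction: transferring weak isomorphism of the kernels $\kappa_i$ to orthogonal equivalence of the measures requires the trace-class property so that the spectral decomposition converges and the eigen-coordinates are well defined, which is precisely why hypotheses (2)--(3) are needed; the diagonal observation is what lets a single such hypothesis suffice. A secondary point, where most of the care would go, is verifying that the Kallenberg dissociated-array uniqueness carries over from the $[0,1]$-valued Bernoulli parameters of the graph setting to real-valued, possibly unbounded entries, although structurally the argument is identical to that of \Cref{thm:unique}.
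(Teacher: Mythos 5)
Your proposal is correct, and for conditions (2)--(3) it follows essentially the paper's route: the paper simply says this case ``follows from the proof of \Cref{thm:unique}'' (ITS via \Cref{pro:its}, then Kallenberg's Theorem 4.1$'$ applied to the equality in distribution of the arrays $\langle X_i(s_k),X_i(s_l)\rangle$, with the negative component vacuous); your explicit observation that $G_{ii}=\|X_i\|^2$ transfers square-integrability from one measure to the other is a point the paper leaves implicit but genuinely needs, since \Cref{thm:unique} assumes both GRDs square-integrable, and your worry about real-valued entries is resolved exactly as you suspect, because under the moment condition $\kappa_i$ is square-integrable (indeed trace-class, as in \Cref{pro:trace}) so Kallenberg's spectral representation applies as in \Cref{thm:unique}. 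Where you genuinely diverge is case (1): the paper first assumes bounded support, so that finite dimensionality plus boundedness yields strong spectral decomposition and Kallenberg's theorem again gives $F_1\stackrel{o.t.}{=}F_2$; it then removes boundedness by truncating, $\tilde X_{j,r}=X_j\mathbf 1(\|X_j\|\le r)$, noting the truncated gram matrix is a deterministic function of $\mathbf G_j$ (the same diagonal observation you made, used differently), obtaining orthogonal $U_r$ for each level $r$, and extracting a single $U$ by compactness of $O(d)$ along a subsequence together with continuity of characteristic functions. Your alternative---multidimensional-scaling rigidity of finite configurations up to $O(d)$, so that the law of the empirical measure modulo $O(d)$ is determined by $\mathcal G_{F_i}$, followed by Varadarajan's law of large numbers and uniqueness of weak limits in the quotient space $\mathcal P(\mathbb R^d)/O(d)$---is valid and entirely bypasses Kallenberg and the truncation/compactness limiting argument in this case. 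What each buys: the paper's proof reuses its exchangeable-array machinery uniformly across all three conditions, while yours is more elementary and self-contained for the finite-dimensional case, at the cost of some standard facts you should verify (measurable reconstruction of a configuration from its gram matrix, and that the compact-group quotient is a metrizable space in which a.s.\ convergence of $[\hat F_n^{(i)}]$ to the deterministic $[F_i]$ forces $[F_1]=[F_2]$); also note that ``general position'' is unnecessary for your rigidity step, since only existence, not uniqueness, of the orthogonal map is needed.
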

Here the equivalence relation ``$\stackrel{o.t.}{=}$'' is defined as in \Cref{def:equiv_ot} by treating $\mathcal H_+=\mathcal X$ and $\mathcal H_-=\emptyset$.

\paragraph{Modeling and inference for relational data} The framework of graph root representation can be extended in several interesting directions.  First, one can model the connection probability with a logistic link function so that the two nodes $i,j$ connect with probability $(1+e^{-\langle Z_i, Z_j \rangle_{\mathcal K}})^{-1}$.  With such a logistic transform, each distribution on $\mathcal K$ can be used to generate an exchangeable random graph, and therefore can model a wider collection of structures.  Moreover, one can also use the same framework to model relational data beyond binary observations.  For example, one may observe event counting between a pair of nodes, such as number of email correspondences and frequency of research article citations.  In applications such as multivariate time series and multimodal imaging, one may even observe a vector for each pair of nodes.

The graph root embedding also facilitates many subsequent inferences. We have discussed two examples in \Cref{subsec:benefit}.  There are other potential uses of GRD representations of networks. For example, in addition to clustering the embedded nodes as in SBM and DCBM, one can also test for specific structures of the graph root distribution, or compare the graph root distributions for multiple networks.  See \cite{Tang17} for an example of two-sample comparison for random dot-product graphs.  Another way to make use of the graph root embedding is to model the node movement in temporal networks.  A challenge is to find the orthogonal transforms to match the embeddings at different time points.  See \cite{SewellC15} for an example using a similar idea with a different latent space model.


\appendix

\section{Further explanation of the GRD correspondence in \Cref{sec:interpretation}}

\subsection{Case 1: SBM}
Assume that we have an SBM with community-wise edge probability matrix $B\in [0,1]^{k\times k}$ and
node membership independently generated from a multinomial distribution with probability vector $\pi\in \Delta_k$.
Here
$$\Delta_{k-1}\coloneqq\{(a_1,...,a_k):~a_1+...+a_k=1\,,~a_j\ge 0\,,~\forall~j\}$$ denotes 
the $(k-1)$-dimensional simplex. 

The corresponding graphon $W$ is a piecewise constant function $W(s,s')=B_{j(s),j(s')}$ where
$$
j(s)=\min \left\{1\le j\le k:~\sum_{i=1}^{j}\pi_i \ge s\right\}\,.
$$

When $s\sim {\rm Uniform}(0,1)$, $j(s)\sim{\rm Multinom}(\pi)$. 
Therefore the function $j(\cdot):[0,1]\mapsto\{1,...,k\}$ is an inverse 
transform sampling for ${\rm Multinom}(\pi)$. The existence of such an inverse transform sampling, as well as others used below, is guaranteed by \Cref{pro:its}.

Since $B$ is symmetric we can write eigen-decomposition $B=UDU^T$, where
$U=(u_{jl})_{j,l=1}^k$ is $k\times k$ orthonormal and $D$ is $k\times k$ diagonal.  Assume the diagonal entries of $D$
are $d_1\ge ...\ge d_{k_1}\ge 0 > d_{k_1+1}\ge ...\ge d_k$. For $1\le j\le k$, define
\begin{equation}\label{eq:vertex}
z_j = (\sqrt{d_1}u_{j1},\sqrt{d_2}u_{j2},...,\sqrt{d_{k_1}u_{jk_1}};\sqrt{-d_{k_1+1}}u_{j,k_1+1},...,\sqrt{-d_k}u_{jk})\,.  
\end{equation}
The symbol ``;'' used in the definition of $z_j$ is to emphasize the delineation between positive and negative components when we view
$z_j$ as a point in a \krein space in $\mathcal K=\mathbb R^{k_1}\times \mathbb R^{k-k_1}$
 with the first $k_1$ coordinates being the positive component and the last $k-k_1$ coordinates being the negative component.
 
The following identity follows direct from construction, but is crucial for our construction.
\begin{align*}
  \langle z_j, z_{j'} \rangle_{\mathcal K} = B_{jj'}\,.
\end{align*} 

Let $F$ be the point mass mixture in $\mathcal K$
$$
F=\sum_{j=1}^k \pi_j \delta_{z_j}
$$
where $\delta_z$ is a point mass at $z$.

In particular, $Z(\cdot):[0,1]\mapsto \mathcal K$ defined as $Z(s)=z_{j(s)}$ is 
an inverse transform sampling of $F$. We have, for arbitrary $s,s'\in [0,1]$,
\begin{align*}
\langle Z(s), Z(s') \rangle_{\mathcal K} = &
\sum_{l=1}^{k_1}d_l u_{j(s),l}u_{j(s'),l}-\sum_{l={k_1+1}}^k d_l u_{j(s),l}u_{j(s'),l}\\
 = & B_{j(s),j(s')}\\
 =&W(s,s')\,.
\end{align*}
And hence the GRD $F$ and graphon $W$ lead to the same distribution.

\subsection{Case 2: DCBM}
In the case of DCBM, we are given the matrix $B$ and node degree variables $(\theta_i:1\le \le n)$ be iid copies of a random variable $\Theta$ supported on $[\theta_{\min},\theta_{\max}]$ such that
$\theta_{\min}\ge 0$ and $\theta_{\max}^2 \max_{j,l}B_{jl}\le 1$.
Now let $(j(s),\theta(s))$ be the joint inverse transform sampling of the product measure
${\rm Multinom}(\pi)\times \Theta$.

The corresponding graphon is 
$$W(s,s')=B_{j(s),j(s')}\theta(s)\theta(s')\,.$$

Let $(z_j:1\le j\le k)$ be the vertices defined in \eqref{eq:vertex}. Define $F$ as the
mixture probability distribution
$$
F=\sum_{j=1}^k \pi_j (\Theta  z_j)
$$
where $\Theta  z_j$ is the random vector in $\mathcal K$ obtained by entry-wise multiplying $z_j$ by the random variable $\Theta$.
If $(j(s),\theta(s))$ is a joint inverse transform sampling of the product measure
${\rm Multinom}(\pi)\times \Theta$, then an inverse transform sampling for $F$ is
$$Z(s)=\theta(s)z_{j(s)}\,.$$

Similarly, the construction of $F$ and $W$ directly implies
$$
\langle Z(s), Z(s') \rangle_{\mathcal K}=W(s,s')\,.
$$

\subsection{Case 3: MMBM}
In addition to the community-wise edge probability matrix $B$, the MMBM also generates
node membership mixing vectors $\phi_i\sim {\rm Dir}(\alpha)$ independently with $\alpha\in(0,\infty)^k$.
Let $$\phi(s)=(\phi_1(s),...,\phi_k(s)):[0,1]\mapsto \Delta_{k-1}$$ be an inverse transform sampling
for ${\rm Dir}(\alpha)$ so that $\phi(s)\sim {\rm Dir}(\alpha)$ when $s\sim {\rm Uniform}(0,1)$.

Now the corresponding graphon is
$$
W(s,s')=\phi(s)^T B \phi(s)\,,
$$
where $\phi(s)^T$ denotes the transpose of $\phi(s)$.

 Let $P$ be the convex hull of $\{z_1,...,z_k\}$, where $(z_j:1\le j\le k)$ are the vertices defined in \eqref{eq:vertex}. 
By linear independence among the
columns of $U$ we know that each $z_j$ cannot be written as a linear combination of others.
So the set of extreme points of $P$ is exactly $\{z_1,...,z_k\}$.

Now the mapping $\phi=(\phi_1,...,\phi_k):[0,1]\mapsto \Delta_k$ can be further forwarded to $P$, denoted as
$Z(\cdot):[0,1]\mapsto P$:
$$
Z(s)=\phi_1(s)z_1+...+\phi_k(s) z_k\,.
$$
Let $F$ be the corresponding induced probability distribution on $P$. By construction one can check that
the GRD sampling generated using $F$ is equivalent to the graphon sampling using $W$.

\section{Proofs for \Cref{sec:grd}}\label{sec:supp_grd}
\paragraph{Notation} We write $\|\cdot\|_{L^2}$ for the $L^2$ norm of a function, $\|\cdot\|_{\rm op}$ for the operator norm of a linear operator in a Hilbert space, and $\|\cdot\|_{\rm HS}$ for the Hilbert-Schmidt norm.

  \begin{proof}[Proof of \Cref{pro:trace}]
    We only do the positive part. The negative part is similar.
    Define integral operator 
    $$W_+^{1/2}(s,s')=\sum_{j=1}^\infty \lambda_j^{1/2}\phi_j(s)\phi_j(s')\,.$$
    Then $W$ being trace-class implies that the eigenvalues of $W_+^{1/2}$ are square-summable, so $W_+$ is Hilbert-Schmidt.  As a result
    $$
    \int_{[0,1]^2} \left[W_+^{1/2}(s,s')\right]^2 ds ds' <\infty\,,
    $$
    which implies that
    $$
    \|W_+^{1/2}(s,\cdot)\|_{L^2([0,1])} <\infty\,,~~{\rm a.e.}\,.
    $$
    Therefore
    \begin{align*}
      \sum_{j\ge 1} \lambda_j\phi_j^2(s) = & \sum_{j\ge 1}\left|\langle
      W_+^{1/2}(s,\cdot),\phi_j
       \rangle\right|^2=\|W_+^{1/2}(s,\cdot)\|^2_{L^2([0,1])}<\infty\,,~~{\rm a.e.}\,.
    \end{align*}
    The square-integrability also follows from the above inequality.
\end{proof}

\begin{proof}[Proof of \Cref{pro:convolution}]
Let $h\in (0,1/2)$ be a bandwidth and consider graphon $W_h$ defined as
\begin{align*}
  W_h(s,s') = 
    \frac{1}{4h^2}\left[\int_{s-h}^{s+h}\int_{s'-h}^{s'+h} W(u,v)dv du \right]\mathbf 1_{(s,s')\in [h,1-h]^2}\,.
\end{align*}
By construction $W_h$ is supported on $[h,1-h]^2$ and it is easy to check that $|W_h(s,t_1)-W_h(s,t_2)|\le \frac{|t_1-t_2|}{2h}$ for all $s,t_1,t_2\in [h,1-h]$.  So $W_h$ is H\"{o}lder-$1$ on $[h,1-h]^2$ and hence trace-class.

Let $\omega(\cdot)$ be the modulus of continuity of $W$. Then for all $(s,s')\in [h,1-h]^2$ we have
$$
\left|W_h(s,s')-W(s,s)\right|\le \omega(\sqrt{2}h)\,.
$$
Thus when $h\rightarrow 0$ we have $\omega(\sqrt{2}h)\rightarrow 0$ and hence
\begin{align*}
  W_h-W \stackrel{L_2}{\rightarrow} 0\,,
\end{align*}
which implies the first part of the result. 

For the ``moreover'' part, we can simply extend $W_h$ to be a continuous graphon on $[0,1]^2$
as follows:
$$
\tilde W_h(s,s')=
  W_h(t_h(s),t_h(s')) $$
where
$$
t_h(s) = h \mathbf 1_{s\in[0,h)} + s \mathbf 1_{s\in[h,1-h]} + (1-h)\mathbf 1_{s\in(1-h,1]}\,.
$$
By construction, $\tilde W_h$ satisfies the same Lipschitz condition as $W_h$, is continuous on $[0,1]$, and
$$
\tilde W_h - W\stackrel{L_2}{\rightarrow }0\,. \qedhere
$$
\end{proof}


\begin{proof}[Proof of \Cref{pro:its}]
 Let $Z=(Z_j:j\ge 1)$ be a random vector in $\mathcal H$, a separable Hilbert space.
 
 For $s\in[0,1]$, let $(s_j:j\ge 1)\in \{0,1\}^{\mathbb N}$ be the unique sequence such that $s=\sum_{j\ge 1} s_j 2^{-j}$. In other words, $s_j$ is the $j$th digit of $s$ written in binary system.
Let $k:\mathbb N\mapsto \mathbb N^2$ be a bijection, such that $k(j)=(k_1(j),k_2(j))$, with inverse mapping $k^{-1}:\mathbb N^2\mapsto \mathbb N$.
For each $i\in\mathbb N$, define
$$t_i(s)=\sum_{j\ge 1}s_{k^{-1}(i,j)}2^{-j}\,.$$
When $s\sim {\rm Unif}(0,1)$, then $s_j$ are iid Bernoulli random variables with parameter $1/2$, and hence $(t_i:i\ge 1)\stackrel{iid}{\sim}{\rm Unif}(0,1)$.
Now we can define $Z$ as follows.
\begin{align*}
  Z_1(s) &= F^{-1}_1(t_1(s))\,,\\
  Z_j(s) &= F_{j|1:(j-1)}^{-1}(t_j(s)\mid Z_1,...,Z_{j-1})\,,~~j\ge 2\,,
\end{align*} 
where $F_1(\cdot)$ is the marginal CDF of $Z_1$, and $F_{j|1:(j-1)}(\cdot|\cdot)$
is the conditional CDF of $Z_j$ given $Z_1,...,Z_{j-1}$\,.
\end{proof}

\begin{proof}[Proof of \Cref{thm:unique}]
For $i=1,2$, let $(X_i(s),Y_i(s)):[0,1]\mapsto \mathcal H_+\oplus \mathcal H_-$ be an ITS of $F_i$.  
  Since $(X_1,Y_1)$, $(X_2,Y_2)$ are square-integrable, we can assume that $X_i$, $Y_i$ ($i=1,2$) have diagonal covariance matrices $\Lambda_i$, $\Gamma_i$, without loss of generality. We also assume that the diagonal elements of $\Lambda_i$ and $\Gamma_i$ are all strictly positive, since if there are zero eigenvalues we can just focus on the subspace spanned by the eigenvectors with non-zero variances.

For $i=1,2$, the graph root sampling scheme with $F_i$ is equivalent to a graphon $W_i$ with 
  \begin{align}
  &W_i(s,s')\nonumber\\=&\langle X_i(s), X_i(s')\rangle-\langle Y_i(s), Y_i(s') \rangle \nonumber\\
  =&\sum_{j} \lambda_{ij} \left[\lambda_{ij}^{-1/2}X_{ij}(s) \lambda_{ij}^{-1/2}X_{ij}(s')\right]\nonumber\\ 
  &-\sum_j \gamma_{ij} \left[\gamma_{ij}^{-1/2} Y_{ij}(s) \gamma_{ij}^{-1/2}Y_{ij}(s')\right]\,.\label{eq:W_i_eigen}
  \end{align}
  where $\lambda_{ij}=\mathbb E (X_{ij})^2$, $\gamma_{ij}=\mathbb E(Y_{ij})^2$
  for $i=1,2$ and $j\ge 1$.
  
  By construction of $X_i$, $Y_i$, \eqref{eq:W_i_eigen} is indeed the eigen-decomposition of $W_i$ and the infinite sum converges both in $L^2([0,1]^2)$ and almost everywhere.

  Since $W_1$ and $W_2$ lead to the same sampling distribution of exchangeable random graphs, by \Cref{lem:wi-dist}, for iid ${\rm Unif}(0,1)$ random variables $(s_i:i\ge 1)$
  $$
    \left[W_1(s_i,s_j):1\le i\le j<\infty\right]\stackrel{d}{=}
    \left[W_2(s_i,s_j):1\le i\le j<\infty\right]\,.
  $$
  Now according to Theorem 4.1' of \cite{Kallenberg89} on representation of exchangeable arrays via spectral decomposition,
  we must have $\lambda_{1j}=\lambda_{2j}=\lambda_j$, $\gamma_{1j}=\gamma_{2j}=\gamma_j$ for all $j$, and there exists unitary operators $Q_+$ and $Q_-$ on $\mathcal H_+$ and $\mathcal H_-$ respectively and satisfying $Q_{+,kk'}=0$ if $\lambda_k\neq \lambda_{k'}$ and $Q_{-,kk'}=0$ if $\gamma_{k}\neq\gamma_{k'}$, such that for any measurable set $A$
  $$
  \mathbb P\left[(\Lambda^{-1/2} X_1, \Gamma^{-1/2} Y_1) \in A\right]=\mathbb P \left[(Q_+ \Lambda^{-1/2}X_2,Q_-\Gamma^{-1/2}Y_2)\in A\right]\,.
  $$
  %
  %
As a result
  \begin{align*}
  &\mathbb P((X_1,Y_1)\in A) \\= & \mathbb P\left[(\Lambda^{-1/2} X_1,\Gamma^{-1/2}Y_1) \in (\Lambda^{1/2}\oplus \Gamma^{1/2})^{-1} A\right] \\
  =&\mathbb P\left[(\Lambda^{-1/2} X_2,\Gamma^{-1/2} Y_2)\in (Q_+^{-1}\oplus Q_-^{-1}) (\Lambda^{1/2}\oplus \Gamma^{1/2})^{-1} A\right] \\
  =&\mathbb P\left[(X_2,Y_2)\in (\Lambda^{1/2}\oplus\Gamma^{1/2}) (Q_+^{-1}\oplus Q_-^{-1}) (\Lambda^{1/2}\oplus \Gamma^{1/2})^{-1} A\right]\\
  =&\mathbb P\left[(X_2,Y_2)\in (Q_+^{-1}\oplus Q_-^{-1})  A\right]\,,
  \end{align*}
  where the commutativity between $(Q_+^{-1}\oplus Q_-^{-1})$ and $(\Lambda^{1/2}\oplus\Gamma^{1/2})$ follows from that $Q_{+,kk'}=0$ if $\lambda_{k}\neq \lambda_{k'}$, and that $Q_{-,kk'}=0$ if $\gamma_k\neq\gamma_{k'}$.
\end{proof}

\begin{proof}[Proof of \Cref{lem:wass>cut}]
  By the results in the previous two subsections, for $i=1,2$, there exists $Z_i: [0,1]\mapsto \mathcal K$ such that $Z_i\sim F_i$ and $W_i(s,s')=\langle Z_i(s),Z_i(s')\rangle_{\mathcal K}$ almost everywhere.
  In the following inequality $h_1,h_2$ range over all measure preserving mappings.
  \begin{align}
    &\delta_{\square}(W_1,W_2)\nonumber\\=&\inf_{h_1,h_2}\sup_{S,S'\subseteq[0,1]}\left|\int_{S\times S'} \left\{W_1\left[h_1(s),h_1(s')\right]-W_2\left[h_2(s),h_2(s')\right]\right\}dsds'\right|\nonumber\\
    \le &\inf_{h_1,h_2}\int \left|W_1\left[h_1(s),h_1(s')\right]-W_2\left[h_2(s),h_2(s')\right]\right|dsds'\nonumber\\
    \le &\inf_{h_1,h_2}\left|\langle Z_1(h_1(s)), Z_1(h_1(s')) \rangle_{\mathcal K}-\langle Z_2(h_2(s)), Z_2(h_2(s')) \rangle_{\mathcal K}\right|dsds'\nonumber\\
    \le  & \inf_{\nu\in \mathcal V(F_1,F_2)}\mathbb E_{(Z_1,Z_2),(Z_1',Z_2')\stackrel{iid}{\sim}\nu} \left|\langle Z_1, Z_1' \rangle_{\mathcal K}-\langle Z_2, Z_2' \rangle_{\mathcal K}\right|\nonumber\\
    = & \inf_{\nu} \mathbb E\left|\langle Z_1-Z_2, Z_1' \rangle_{\mathcal K} + \langle Z_2, Z_1'-Z_2'\rangle_{\mathcal K}\right|\nonumber\\
    \le & \inf_\nu \mathbb E\|Z_1-Z_2\|\mathbb E \|Z_1'\|+\mathbb E\|Z_2\|\mathbb E\|Z_1'-Z_2'\|\nonumber\\
    =& (\mathbb E\|Z_1\|+\mathbb E\|Z_2\|)\inf_\nu\mathbb E\|Z_1-Z_2\|\nonumber\\
    =&(\mathbb E_{Z\sim F_1}\|Z\|+\mathbb E_{Z\sim F_2}\|Z\|)d_{\rm w}(F_1,F_2)\,.\qedhere
  \end{align}
\end{proof}

\begin{proof}[Proof of \Cref{thm:topology}]
 Write $Q=Q_+\oplus Q_-$
   and $Z=(X,Y)$ with the corresponding positive-naegative subspace decomposition of $\mathcal K$. Let $(X_N,Y_N)\sim F_N$ and $(X,Y)\sim F$.    We will use $Z_N$ and $Z$ to denote $F_N$ and $F$ whenever there is no confusion.
  
    For each $N$ and each $\epsilon>0$, let $Q_N=(Q_{+,N}\oplus Q_{-,N})$ be such that $d_{\rm w}(Q_N Z_N,Z)\le d_{\rm ow}(Z_N,Z) + \epsilon$\,.  Let $\tilde Z_N=Q_N Z_N$.
Now use \Cref{lem:wass>cut} we have
  \begin{align*}
    \delta_{\square}(W_{N},W)=&\delta_{\square}(W_{\tilde Z_N},W_{Z})\\
    \le & (\mathbb E\|\tilde Z_N\|+\mathbb E\|Z\|)d_{\rm w}(\tilde Z_N,Z)\\
    \le & \left(\mathbb E_{F_N}\|Z\|+\mathbb E_{F}\|Z\|\right) (d_{\rm ow}(F_N,F) + \epsilon)\,.
  \end{align*}
  The first part of proof concludes by taking $N=1$ and arbitrariness of $\epsilon$.  The second part follows by realizing that $\mathbb E_{F_N}\|Z\|\le \mathbb E_{F}\|Z\|+d_{\rm ow}(F_N,F)$\,.
\end{proof}

\section{Proofs for statistical estimation}\label{sec:proof_est}
\begin{proof}[Proof of \Cref{thm:sample_recov}]
Let $\mathbf C$ be the second moment operator of $Z$ with  block matrix decomposition (each block has infinite size)
$$\mathbf C=\left[\begin{array}{cc}
  \mathbf C_X & \mathbf C_{XY} \\ \mathbf C_{YX} & \mathbf C_Y
\end{array}\right]=\left[\begin{array}{cc}
  \mathbb E(XX^T) & 0 \\
  0 &\mathbb E(YY^T)
\end{array}\right]\,.$$
Let $(\lambda_j,\phi_j)_{j\ge 1}$ be the eigenvalue-eigenvector pairs of $\mathbf C_X$ ranked in decreasing order of $\lambda_j$.  Define $(\gamma_j,\psi_j)_{j\ge 1}$ correspondingly for $\mathbf C_Y$.

Let $\mathbf X^{(p)}=\mathbf X \bm{\phi}^{(p)}$ be the first $p$ columns of $\mathbf X$.  Our goal is to show that $\hat{\mathbf X}^{(p)}_A$ defined in \eqref{eq:estimator} is close to $\mathbf X^{(p)}$.  We do this by considering two intermediate approximations.

The first approximation is the truncated weighted spectral embedding of the empirical covariance of $\mathbf X$.

Write the data matrices $\mathbf X$, $\mathbf Y$ in their singular value decompositions
\begin{align*}
  \mathbf X = n^{1/2}\hat{\bm{\xi}}\hat\Lambda^{1/2}\hat{\bm{\phi}}^T\,,~~
  \mathbf Y = n^{1/2}\hat{\bm{\zeta}}\hat\Gamma^{1/2}\hat{\bm{\psi}}^T\,,
\end{align*}
where $\hat{\bm{\xi}}$ and $\hat{\bm{\zeta}}$ are $n\times n$ orthonormal matrices, $\hat\Lambda={\rm diag}(\hat\lambda_1,...,\hat\lambda_n)$ and $\hat\Gamma={\rm diag}(\hat\gamma_1,...,\hat\gamma_n)$ are $n\times n$ diagonal positive semidefinite, and $\hat{\bm{\phi}}=(\hat\phi_1,...,\hat\phi_n)$ and $\hat{\bm{\psi}}=(\hat\psi_1,...,\hat\psi_n)$ are  $\infty\times n$ matrices with orthonormal columns.

Now we consider truncated singular value decomposition of $\mathbf X$:
\begin{align*}
  \hat{\mathbf X}^{(p)} = n^{1/2}\hat{\bm{\xi}}^{(p)}\hat\Lambda^{1/2}_{p}=\mathbf X \hat{\bm{\phi}}^{(p)}\,,
\end{align*}
where $\hat\Lambda_p$ is the top $p\times p$ block of $\hat\Lambda$.


By correspondence between the sample covariance matrix and the gram matrix, we have eigen-decompositions for $\hat{\mathbf C}_X$ and $\hat{\mathbf C}_Y$:
$$
\hat{\mathbf C}_X=\hat{\bm{\phi}}\hat\Lambda \hat{\bm{\phi}}^T\,,~~
\hat{\mathbf C}_Y=\hat{\bm{\psi}}\hat\Gamma \hat{\bm{\psi}}^T\,.
$$
Let $\hat{\mathbf C}$ be the sample covariance with corresponding block matrix decomposition.  Under assumption (A3) we have \citep[according to][for example]{HallH07,HsingE15}
\begin{equation}\label{eq:cov_est_rate}
  \|\hat{\mathbf C}-\mathbf C\|_{\rm op}\le \|\hat{\mathbf C}-\mathbf C\|_{\rm HS} =O_P(n^{1/2})\,,
\end{equation}
where $\|\cdot\|_{\rm op}$ denotes the operator norm and $\|\cdot\|_{\rm HS}$ denotes the Hilbert-Schmidt norm.

The second intermediate approximation is the truncated weighted spectral embedding of the gram matrix.

Let $\mathbf G = \mathbf X \mathbf X^T -\mathbf Y\mathbf Y^T$.  
Let $(\hat\lambda_{j,G},\hat u_j)_{j=1}^p$ be the top $p$ eigenvalue-eigenvector pairs of $\mathbf G$, ranked in descending order.

Define
$$\hat{\mathbf X}_{G}^{(p)}=(\hat\lambda_{1,G}^{1/2}\hat u_1,...,\hat\lambda_{p,G}^{1/2}\hat u_j)\,.$$

Our plan is to show that
\begin{align*}
& n^{-1}\max\left\{\|\mathbf X^{(p)}-\hat{\mathbf X}^{(p)}\|_F^2,~\|\hat{\mathbf X}^{(p)}-\hat{\mathbf X}_G^{(p)}\|_F^2~,\|\hat{\mathbf X}^{(p)}_G-\hat{\mathbf X}_A^{(p)}\|_F^2\right\}\\
=& O_P\left(n^{-\frac{\alpha-1}{2\beta}}+p^{2\beta+1}n^{-1}\right)\,.
\end{align*}
These three parts are analyzed in \Cref{lem:Xp_hatXp,lem:eigen_comp_G_X,lem:XA_XG_rho}.
\end{proof}

Our analysis uses spectral perturbation theory for linear operators in Hilbert spaces. Here we cite the version that is useful for our purpose.
\begin{lemma}[Spectral perturbation \citep{BhatiaDM83}]
  Let $C_1$, $C_2$ be two symmetric Hilbert-Schmidt integral operators with spectral decompositions $C_i=\sum_{j}\lambda_{ij}u_{ij}\otimes u_{ij}-\gamma_{ij}v_{ij}\otimes v_{ij}$ for $i=1,2$, with $\lambda_{i1}\ge\lambda_{i2}\ge ...\ge 0$ and $\gamma_{i1}\ge\gamma_{i2}\ge ...>0$. Then 
  $$|\lambda_{1j}-\lambda_{2j}|\le \|C_1-C_2\|_{\rm op}$$
  and, for some constant $c$,
  $$
  \|u_{1j}-u_{2j}\|\le c \delta_{j}^{-1}\|C_1-C_2\|_{\rm op}\,,
  $$ where $\delta_j=\min(\lambda_{1j}-\lambda_{1,j+1},\lambda_{1,j-1}-\lambda_{1j})$, provided that $\delta_{j}^{-1}\|C_1-C_2\|_{\rm op}$ is smaller than some constant.
\end{lemma}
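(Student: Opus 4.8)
The statement bundles two classical facts---a Weyl-type eigenvalue bound and a Davis--Kahan eigenvector bound---so the plan is to prove the two displays separately. For the first inequality I would invoke the Courant--Fischer min--max characterization. Since all the $\lambda_{ij}$ are nonnegative and precede the negative eigenvalues $-\gamma_{ij}$, the $j$th positive eigenvalue $\lambda_{ij}$ coincides with the $j$th largest eigenvalue of the full spectrum of $C_i$, so
\[
\lambda_{ij}=\max_{\dim V=j}\ \min_{x\in V,\ \|x\|=1}\langle C_i x,x\rangle\,.
\]
For any unit vector $x$ one has $\langle C_1 x,x\rangle \le \langle C_2 x,x\rangle+\|C_1-C_2\|_{\rm op}$; taking the minimum over $x\in V$ and then the maximum over $j$-dimensional subspaces $V$ gives $\lambda_{1j}\le\lambda_{2j}+\|C_1-C_2\|_{\rm op}$, and the reverse inequality follows by symmetry, yielding $|\lambda_{1j}-\lambda_{2j}|\le\|C_1-C_2\|_{\rm op}$.

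For the eigenvector bound I would use a resolvent (Sylvester-equation) argument. Write $\eta=\|C_1-C_2\|_{\rm op}$, let $P'=u_{2j}\otimes u_{2j}$ be the rank-one spectral projection of $C_2$, and decompose $u_{1j}=\langle u_{1j},u_{2j}\rangle u_{2j}+w$ with $w=(I-P')u_{1j}$, so that $\|w\|^2=1-\langle u_{1j},u_{2j}\rangle^2$. Applying $C_2$ to $C_1u_{1j}=\lambda_{1j}u_{1j}$ gives $C_2u_{1j}=\lambda_{1j}u_{1j}-(C_1-C_2)u_{1j}$; projecting by $I-P'$ and using that $C_2$ commutes with $P'$ produces the equation $(S-\lambda_{1j})w=-(I-P')(C_1-C_2)u_{1j}$, where $S$ is the restriction of $C_2$ to $\mathrm{range}(I-P')$. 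The key estimate is then $\|(S-\lambda_{1j})^{-1}\|\le\delta^{-1}$ with $\delta=\mathrm{dist}\bigl(\lambda_{1j},\ \mathrm{spec}(C_2)\setminus\{\lambda_{2j}\}\bigr)$, which bounds $\|w\|\le\delta^{-1}\eta$.

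To control $\delta$ I would feed the eigenvalue inequality back in: by Weyl the eigenvalues of $C_2$ lie within $\eta$ of those of $C_1$, so the neighbors $\lambda_{2,j\pm1}$ sit at distance at least $\delta_j-\eta$ from $\lambda_{1j}$, while every negative eigenvalue is at distance at least $\lambda_{1j}\ge\delta_j$ (using $\delta_j\le\lambda_{1j}-\lambda_{1,j+1}\le\lambda_{1j}$). Hence $\delta\ge\delta_j-\eta$, and under the provision that $\delta_j^{-1}\eta$ is small (say $\le 1/2$) this gives $\delta\ge\delta_j/2$, so $\|w\|\le 2\delta_j^{-1}\eta$. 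Finally I would convert this sine bound into the vector bound by fixing the sign of $u_{2j}$ so that $\langle u_{1j},u_{2j}\rangle\ge0$; then $\|u_{1j}-u_{2j}\|^2=2-2\sqrt{1-\|w\|^2}\le 2\|w\|^2$, whence $\|u_{1j}-u_{2j}\|\le\sqrt2\,\|w\|\le 2\sqrt2\,\delta_j^{-1}\|C_1-C_2\|_{\rm op}$, the asserted bound with $c=2\sqrt2$.

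The hard part will be the resolvent estimate together with the bookkeeping that guarantees the individual eigenvectors are well defined: the positivity of $\delta_j$ forces $\lambda_{1j}$ to be simple, and the smallness provision transfers simplicity to $C_2$ so that $u_{2j}$ is unambiguous up to sign. A fully rigorous treatment of the operator-norm resolvent bound in the Hilbert--Schmidt setting, via solution of the operator Sylvester equation, is exactly what \cite{BhatiaDM83} provides, and I would cite it for that technical core rather than reproduce the functional-calculus details.
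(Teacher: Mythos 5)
The paper never proves this lemma: it is quoted as a known result, with the proof deferred entirely to the cited reference \cite{BhatiaDM83} (``Here we cite the version that is useful for our purpose''). So there is no internal proof to compare against; what matters is whether your self-contained argument stands on its own, and it essentially does. The Courant--Fischer/Weyl step is sound (one pedantic point: in infinite dimensions, when the positive eigenvalue list is padded with zeros, the ``max'' over $j$-dimensional subspaces should be a supremum, since it need not be attained---the inequality argument is unaffected). The resolvent step is the standard Davis--Kahan mechanism, and you correctly identify the two places where care is required: first, $\mathrm{spec}(S)$ contains $0$, all the negative eigenvalues $-\gamma_{2k}$, and the neighbours $\lambda_{2,j\pm1}$, and each of these is at distance at least $\min(\delta_j-\eta,\lambda_{1j})\ge\delta_j-\eta$ from $\lambda_{1j}$; second, the smallness provision forces $\lambda_{2j}$ to be a \emph{simple} eigenvalue of $C_2$ (otherwise $\delta_j\le 2\eta$ by Weyl), which is exactly what guarantees that $\lambda_{2j}$ itself is excluded from $\mathrm{spec}(S)$ and that $u_{2j}$ is well defined up to sign. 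Your sign-fixing and the elementary bound $2-2\sqrt{1-t}\le 2t$ then give the stated conclusion with the explicit constant $c=2\sqrt2$. My only criticism is your closing paragraph, which undersells the argument: for a self-adjoint operator the bound $\|(S-\lambda_{1j})^{-1}\|=\mathrm{dist}\bigl(\lambda_{1j},\mathrm{spec}(S)\bigr)^{-1}$ is elementary spectral calculus, not something requiring the Sylvester-equation machinery of \cite{BhatiaDM83}; your proof is already complete without the citation, whereas the paper's citation buys brevity and coverage of more general versions (non-commuting perturbations, non-self-adjoint settings) that are not needed for this statement.
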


A standard application of spectral perturbation theory ensures that, by combining Assumption (A2) and (\ref{eq:cov_est_rate}), uniformly over $j\le p=o(n^{1/(2\beta)})$
\begin{align} \hat\lambda_j=&\lambda_j+O_P(n^{-1/2})=(1+o_P(1))\lambda_j\,,\nonumber\\
  \hat\lambda_j-\hat\lambda_{j+1}\ge & c(1+o_P(1))j^{-\beta}\,,\label{eq:spec_perturb}\\
\|\hat\phi_j-\phi_j\|=&j^{\beta}O_P(n^{-1/2})\,.\nonumber
\end{align}

We will use the following result repeatedly.  The proof is elementary and omitted.
\begin{lemma}\label{lem:square-root-bound}
  If $a_n$ is a positive sequence and $b_n$ is a sequence such that $|b_n|=o(a_n)$, then
  $$
  (a_n+b_n)^{1/2} = a_n^{1/2}+O(|b_n| a_n^{-1/2})\,.
  $$
\end{lemma}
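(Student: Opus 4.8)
The plan is to prove the little-o statement in its only meaningful form, namely with the first-order term made explicit: I will show
\[
(a_n+b_n)^{1/2} = a_n^{1/2} + \tfrac12 b_n a_n^{-1/2} + o\!\left(|b_n|\,a_n^{-1/2}\right),
\]
equivalently that the remainder $(a_n+b_n)^{1/2} - a_n^{1/2} - \tfrac12 b_n a_n^{-1/2}$ is $o(|b_n|a_n^{-1/2})$. The whole argument rests on the elementary conjugate identity applied twice, together with the hypothesis $|b_n|=o(a_n)$, i.e. $b_n/a_n\to 0$, which in particular forces $a_n+b_n>0$ for all large $n$ so that every square root below is defined.

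First I would write the raw difference as
\[
(a_n+b_n)^{1/2} - a_n^{1/2} = \frac{b_n}{(a_n+b_n)^{1/2} + a_n^{1/2}},
\]
and then subtract the candidate leading term $\tfrac12 b_n a_n^{-1/2}=b_n/(2a_n^{1/2})$. Putting the two fractions over the common denominator $2a_n^{1/2}\bigl[(a_n+b_n)^{1/2}+a_n^{1/2}\bigr]$, the numerator becomes $b_n\bigl(a_n^{1/2}-(a_n+b_n)^{1/2}\bigr)$; a second application of the conjugate identity turns $a_n^{1/2}-(a_n+b_n)^{1/2}$ into $-b_n/\bigl[(a_n+b_n)^{1/2}+a_n^{1/2}\bigr]$, so the remainder collapses to the exact expression
\[
(a_n+b_n)^{1/2} - a_n^{1/2} - \tfrac12 b_n a_n^{-1/2} = -\,\frac{b_n^2}{2\,a_n^{1/2}\bigl[(a_n+b_n)^{1/2}+a_n^{1/2}\bigr]^2}.
\]

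To bound this, I would factor $(a_n+b_n)^{1/2}=a_n^{1/2}(1+b_n/a_n)^{1/2}$; since $b_n/a_n\to 0$, the bracket equals $a_n^{1/2}\bigl[(1+b_n/a_n)^{1/2}+1\bigr]=2a_n^{1/2}(1+o(1))$, whence the denominator is $8a_n^{3/2}(1+o(1))$ and the remainder is $-\tfrac18 b_n^2 a_n^{-3/2}(1+o(1))$. Its ratio to $|b_n|a_n^{-1/2}$ is $\tfrac18(|b_n|/a_n)(1+o(1))\to 0$, which is precisely the asserted little-o bound; as a byproduct the computation pins the genuine order of the remainder at $\Theta(b_n^2 a_n^{-3/2})$.

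The one point requiring care---and the gap in a naive attempt---is that the bare difference $(a_n+b_n)^{1/2}-a_n^{1/2}$ is itself of exact order $|b_n|a_n^{-1/2}$, so it is not $o(|b_n|a_n^{-1/2})$; the little-o holds only for the remainder left after peeling off the first-order term $\tfrac12 b_n a_n^{-1/2}$. Once that term is correctly identified, no estimate beyond the two conjugate-identity manipulations and the single limit $b_n/a_n\to 0$ is needed, matching the paper's description of the argument as elementary.
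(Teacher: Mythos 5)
Your proof is correct, and your diagnosis is the key point. The paper offers no argument to compare against here---it declares the proof ``elementary and omitted''---and the lemma as printed is in fact false as a literal little-o statement: by the conjugate identity, $(a_n+b_n)^{1/2}-a_n^{1/2}=b_n/[(a_n+b_n)^{1/2}+a_n^{1/2}]=\tfrac12 b_n a_n^{-1/2}(1+o(1))$, which is of \emph{exact} order $|b_n|a_n^{-1/2}$ whenever $b_n\neq 0$. Your double application of the conjugate identity correctly yields the exact remainder $-b_n^2/\bigl(2a_n^{1/2}[(a_n+b_n)^{1/2}+a_n^{1/2}]^2\bigr)=-\tfrac18 b_n^2 a_n^{-3/2}(1+o(1))$, establishing the repaired expansion $(a_n+b_n)^{1/2}=a_n^{1/2}+\tfrac12 b_n a_n^{-1/2}+o(|b_n|a_n^{-1/2})$, and you rightly note that $|b_n|=o(a_n)$ guarantees $a_n+b_n>0$ for large $n$ so every square root is defined. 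The algebra checks out at each step.

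Two remarks tying this back to the paper. First, what the paper actually uses downstream (in \Cref{lem:eigen_comp_G_X}, to bound $|\hat\lambda_j^{1/2}-n^{-1/2}\hat\lambda_{j,G}^{1/2}|$, and in \Cref{lem:XA_XG}, to bound $|\hat\lambda_{j,A}^{1/2}-(\rho_n\hat\lambda_{j,G})^{1/2}|$) is only the big-O form $(a_n+b_n)^{1/2}=a_n^{1/2}+O(|b_n|a_n^{-1/2})$, which your expansion implies, so the corrected lemma fully supports those applications. Second, for that big-O form a single conjugate identity already suffices, with $|b_n|=o(a_n)$ needed only to ensure eventual positivity: whenever $a_n>0$ and $a_n+b_n\ge 0$, one has $|(a_n+b_n)^{1/2}-a_n^{1/2}|=|b_n|/[(a_n+b_n)^{1/2}+a_n^{1/2}]\le |b_n|a_n^{-1/2}$. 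Your argument is therefore sharper than strictly necessary; its added value is pinning the second-order remainder at $\Theta(b_n^2 a_n^{-3/2})$ and exposing that the little-o in the lemma's statement should be read as a big-O (or accompanied by the explicit $\tfrac12 b_n a_n^{-1/2}$ term, as you wrote it).
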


\begin{lemma}\label{lem:Xp_hatXp}
  Under assumptions (A1-A3), then for all $p=o(n^{1/(2\beta)})$ we have
    $$
    n^{-1}\|\mathbf X^{(p)}-\hat{\mathbf X}^{(p)}\|_F^2=O_P(n^{-\frac{\alpha-1}{2\beta}})\,.
    $$
\end{lemma}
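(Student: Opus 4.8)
The plan is to reduce the Frobenius-norm error to a sum of $p$ scalar quadratic forms in the empirical second-moment operator $\hat{\mathbf C}_X=n^{-1}\mathbf X^T\mathbf X$, and then to kill the dangerous first-order fluctuations by a trace (Ky Fan) argument. Since assumption (A1) makes $\mathbf C_X$ diagonal, the true eigenvectors are $\phi_j=e_j$, and both $\mathbf X^{(p)}=\mathbf X\bm\phi^{(p)}$ and $\hat{\mathbf X}^{(p)}=\mathbf X\hat{\bm\phi}^{(p)}$ are coordinate matrices in aligned orthonormal bases, so that expanding the Frobenius norm columnwise gives
\begin{align*}
 n^{-1}\|\mathbf X^{(p)}-\hat{\mathbf X}^{(p)}\|_F^2
 = n^{-1}\sum_{j=1}^p\|\mathbf X(\phi_j-\hat\phi_j)\|^2
 = \sum_{j=1}^p(\phi_j-\hat\phi_j)^T\hat{\mathbf C}_X(\phi_j-\hat\phi_j)\,.
\end{align*}
Here I fix the sign of each $\hat\phi_j$ so that $\langle\phi_j,\hat\phi_j\rangle\ge 0$, consistent with the perturbation bound $\|\phi_j-\hat\phi_j\|=j^\beta O_P(n^{-1/2})$ in \eqref{eq:spec_perturb}; without this alignment the columnwise differences would not be small.

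Next I would expand each quadratic form using the eigen-relation $\hat{\mathbf C}_X\hat\phi_j=\hat\lambda_j\hat\phi_j$, the normalizations $\|\phi_j\|=\|\hat\phi_j\|=1$, and the identity $\langle\phi_j,\hat\phi_j\rangle=1-\tfrac12\|\phi_j-\hat\phi_j\|^2$, which yields the exact decomposition
$$
 (\phi_j-\hat\phi_j)^T\hat{\mathbf C}_X(\phi_j-\hat\phi_j)=(\hat{\mathbf C}_X)_{jj}-\hat\lambda_j+\hat\lambda_j\|\phi_j-\hat\phi_j\|^2\,.
$$
Summing over $j\le p$ splits the error into $\sum_{j\le p}[(\hat{\mathbf C}_X)_{jj}-\hat\lambda_j]$ plus $\sum_{j\le p}\hat\lambda_j\|\phi_j-\hat\phi_j\|^2$. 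The second sum is routine: by (A2) and \eqref{eq:spec_perturb}, uniformly over $j\le p=o(n^{1/(2\beta)})$ one has $\hat\lambda_j\le(1+o_P(1))c_2 j^{-\alpha}$ and $\|\phi_j-\hat\phi_j\|^2=j^{2\beta}O_P(n^{-1})$, so (since $2\beta-\alpha>0$) the second sum is $O_P\!\big(n^{-1}\sum_{j\le p}j^{2\beta-\alpha}\big)=O_P(p^{2\beta-\alpha+1}n^{-1})$, and because $p^{2\beta}n^{-1}=o(1)$ this is in fact $o_P(p^{-(\alpha-1)})$.

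The main obstacle is the first sum $\sum_{j\le p}[(\hat{\mathbf C}_X)_{jj}-\hat\lambda_j]$: each term is only $O_P(n^{-1/2})$, and a term-by-term union bound over the $p$ summands is wasteful and would force me to control the fluctuations of individual empirical variances. The clean resolution is to recognize this sum as a difference of traces, ${\rm tr}(P_p\hat{\mathbf C}_X)-{\rm tr}(\hat P_p\hat{\mathbf C}_X)$, where $P_p=\bm\phi^{(p)}(\bm\phi^{(p)})^T$ projects onto $\operatorname{span}(e_1,\dots,e_p)$ and $\hat P_p=\hat{\bm\phi}^{(p)}(\hat{\bm\phi}^{(p)})^T$ projects onto the empirical top-$p$ eigenspace. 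Since $\hat{\mathbf C}_X\succeq 0$, Ky Fan's maximum principle gives that $\sum_{j\le p}\hat\lambda_j={\rm tr}(\hat P_p\hat{\mathbf C}_X)$ is the \emph{maximum} of ${\rm tr}(P\hat{\mathbf C}_X)$ over all rank-$p$ orthogonal projections $P$, so the first sum is $\le 0$. Consequently the total error is sandwiched between $0$ and the second sum, giving $n^{-1}\|\mathbf X^{(p)}-\hat{\mathbf X}^{(p)}\|_F^2\le o_P(p^{-(\alpha-1)})=O_P(p^{-(\alpha-1)})$, as claimed.
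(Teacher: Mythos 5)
Your proof is correct, and its key step is genuinely different from the paper's. Both arguments begin identically, reducing the error to $\sum_{j\le p}(\phi_j-\hat\phi_j)^T\hat{\mathbf C}_X(\phi_j-\hat\phi_j)$ and invoking \eqref{eq:spec_perturb} (which, as you note, presumes sign alignment of $\hat\phi_j$ with $\phi_j$; the paper uses this implicitly). From there the paper splits $\hat{\mathbf C}_X=(\hat{\mathbf C}_X-\mathbf C_X)+\mathbf C_X$: the perturbation part is bounded by $\|\hat{\mathbf C}_X-\mathbf C_X\|_{\rm op}\sum_{k\le p}\|\phi_k-\hat\phi_k\|^2=O_P(p^{2\beta+1}n^{-3/2})$, while the $\mathbf C_X$ part is expanded in the population eigenbasis and bounded via orthonormality (Bessel's inequality) by $2\sum_{j\ge 1}\lambda_j\|\phi_j-\hat\phi_j\|^2$; since no eigenvector perturbation bound is available for directions $j>p$, that tail is bounded trivially by $4\sum_{j>p}\lambda_j\asymp p^{-(\alpha-1)}$, and this spectral tail is precisely the source of the rate claimed in the lemma. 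Your route instead uses the exact identity $(\phi_j-\hat\phi_j)^T\hat{\mathbf C}_X(\phi_j-\hat\phi_j)=(\hat{\mathbf C}_X)_{jj}-\hat\lambda_j+\hat\lambda_j\|\phi_j-\hat\phi_j\|^2$ and kills the potentially dangerous first-order sum $\sum_{j\le p}[(\hat{\mathbf C}_X)_{jj}-\hat\lambda_j]$ at no cost via the Ky Fan maximum principle (it is nonpositive), so the tail directions never enter and you obtain the strictly sharper bound $O_P(p^{2\beta-\alpha+1}n^{-1})=o_P(p^{-(\alpha-1)})$. In other words, your argument shows that the $p^{-(\alpha-1)}$ term in this lemma is an artifact of the Bessel/tail bound in the paper's proof rather than intrinsic to the projection mismatch. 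This sharpening would propagate to the first claim of \Cref{thm:sample_recov}, though the final rate in \Cref{thm:ggd-est} is unchanged, since $p^{-(\alpha-1)/2}$ re-enters there through the genuine truncation error of \Cref{lem:F_FP}.
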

\begin{proof}[Proof of \Cref{lem:Xp_hatXp}]
  Applying spectral perturbation theory to $\mathbf C_X$ and $\hat{\mathbf C}_X$ we know that \eqref{eq:spec_perturb} holds.
  \begin{align*}
    &n^{-1}\|\mathbf X^{(p)}-\hat{\mathbf X}^{(p)}\|_F^2\\
    =&n^{-1}\|\mathbf X(\bm{\phi}^{(p)}-\hat{\bm{\phi}}^{(p)})\|_F^2\\
    =&{\rm tr}\left\{(\bm{\phi}^{(p)}-\hat{\bm{\phi}}^{(p)})^T\hat{\mathbf C}_X(\bm{\phi}^{(p)}-\hat{\bm{\phi}}^{(p)})\right\}\\
    = &{\rm tr}\left\{(\bm{\phi}^{(p)}-\hat{\bm{\phi}}^{(p)})^T(\hat{\mathbf C}_X-\mathbf C_X)(\bm{\phi}^{(p)}-\hat{\bm{\phi}}^{(p)})\right\}\\
    &+{\rm tr}\left\{(\bm{\phi}^{(p)}-\hat{\bm{\phi}}^{(p)})^T\mathbf C_X(\bm{\phi}^{(p)}-\hat{\bm{\phi}}^{(p)})\right\}
    \end{align*}
  For the first term we have
  \begin{align*}
    &\left|{\rm tr}\left\{(\bm{\phi}^{(p)}-\hat{\bm{\phi}}^{(p)})^T(\hat{\mathbf C}_X-\mathbf C_X)(\bm{\phi}^{(p)}-\hat{\bm{\phi}}^{(p)})\right\}\right|\\
    \le & \sum_{k=1}^p\left|(\phi_k-\hat\phi_k)^T(\hat{\mathbf C}_X-\mathbf C_X)(\phi_k-\hat\phi_k)\right|\\
    \le & \sum_{k=1}^p \|\hat{\mathbf C}_X-\mathbf C_X\|_{\rm op} \|\phi_k-\hat\phi_k\|^2\\
    \le & O_P(n^{-1/2})\sum_{k=1}^p k^{2\beta} O_P(n^{-1})\\
    = & O_P(p^{1+2\beta} n^{-3/2})\,.
  \end{align*}
  For the second term, let $q=c_q n^{1/(2\beta)}$ for some small enough constant $c_q$ so that $\|\hat\phi_j-\phi_j\|\lesssim j^{\beta}O_P(n^{-1/2})$ uniformly for all $1\le j\le q$.  We have,
  \begin{align*}
    &{\rm tr}\left\{(\bm{\phi}^{(p)}-\hat{\bm{\phi}}^{(p)})^T\mathbf C_X(\bm{\phi}^{(p)}-\hat{\bm{\phi}}^{(p)})\right\}\\=&\sum_{k=1}^p(\phi_k-\hat\phi_k)^T\left[\sum_{j=1}^\infty \lambda_j\phi_j\phi_j^T\right](\phi_k-\hat\phi_k)\\
    =&\sum_{j=1}^\infty\lambda_j\left\{\sum_{k=1}^p\left[(\phi_k-\hat\phi_k)^T\phi_j\right]^2\right\}\\
    \le&\sum_{j=1}^\infty\lambda_j\left\{\left[(\phi_j-\hat\phi_j)^T\phi_j\right]^2+\sum_{k\neq j}\left[(\phi_k-\hat\phi_k)^T\phi_j\right]^2\right\}\\
    =&\sum_{j=1}^\infty\lambda_j\left\{\left[(\phi_j-\hat\phi_j)^T\phi_j\right]^2+\sum_{k\neq j}\left[\hat\phi_k^T(\hat\phi_j-\phi_j)\right]^2\right\}\\
    \le & 2\sum_{j=1}^\infty \lambda_j \|\phi_j-\hat\phi_j\|^2\\
    = &\sum_{j=1}^q j^{-\alpha}j^{2\beta}O_P(n^{-1})+4c\sum_{j=q+1}^\infty j^{-\alpha}\\
    =&O_P(q^{2\beta-\alpha+1}n^{-1}+q^{-(\alpha-1)})=O_P(n^{-\frac{(\alpha-1)}{2\beta}})\,.
  \end{align*}
Combining the above two inequalities leads to the desired error bound.
\end{proof}

\begin{lemma}\label{lem:eigen_comp_G_X}
  Under assumptions (A1-A3), if $p\le cn^{1/(\alpha+2\beta)}$ for some sufficiently small constant $c$ then
  $$n^{-1}\|\hat{\mathbf X}^{(p)}-\hat{\mathbf X}_{G}^{(p)}\|_F^2
  =O_P(p^{2\beta+1}n^{-1})\,.$$
  Moreover, $n^{-1}(\hat\lambda_{j,G}-\hat\lambda_{j+1,G})\ge cj^{-\beta}(1+o_P(1))$ uniformly for $1\le j\le p$ for some constant $c$.
\end{lemma}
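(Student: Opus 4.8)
The plan is to view $\mathbf G=\mathbf X\mathbf X^T-\mathbf Y\mathbf Y^T$ as a perturbation of $\mathbf X\mathbf X^T$, whose top-$p$ positive eigenpairs $(n\hat\lambda_j,\hat\xi_j)$ generate the columns $(n\hat\lambda_j)^{1/2}\hat\xi_j$ of $\hat{\mathbf X}^{(p)}$, and to compare them with the top-$p$ positive eigenpairs $(\hat\lambda_{j,G},\hat u_j)$ of $\mathbf G$ that generate $\hat{\mathbf X}_G^{(p)}$. The essential difficulty is that the perturbing matrix $\mathbf Y\mathbf Y^T$ is positive semidefinite with operator norm $n\hat\gamma_1$ of order $n$, i.e.\ comparable to $\|\mathbf X\mathbf X^T\|_{\rm op}=n\hat\lambda_1$; thus no operator-norm perturbation bound applies, and every estimate must instead be driven by $\|\mathbf Y\mathbf Y^T\hat\xi_j\|$, which is small because the sampled positive and negative directions are nearly orthogonal.

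First I would make the near-orthogonality quantitative. By (A1) the population cross-covariance vanishes, so (A3) together with \eqref{eq:cov_est_rate} yields $\|\hat{\mathbf C}_{XY}\|_{\rm op}=O_P(n^{-1/2})$. Since $\hat\xi_j=n^{-1/2}\hat\lambda_j^{-1/2}\mathbf X\hat\phi_j$ and $\mathbf Y^T\mathbf X=n\hat{\mathbf C}_{YX}$, this gives $\mathbf Y^T\hat\xi_j=n^{1/2}\hat\lambda_j^{-1/2}\hat{\mathbf C}_{YX}\hat\phi_j$, hence $\|\mathbf Y^T\hat\xi_j\|=O_P(\hat\lambda_j^{-1/2})=O_P(j^{\alpha/2})$ and $\|\mathbf Y\mathbf Y^T\hat\xi_j\|\le\|\mathbf Y\|_{\rm op}\|\mathbf Y^T\hat\xi_j\|=O_P(n^{1/2}j^{\alpha/2})$, uniformly over $j\le p$. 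This is the key gain: it beats the trivial bound $n\hat\gamma_1$ by a factor of order $n^{1/2}j^{-\alpha/2}$.

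For the eigenvalue and gap claim, Weyl's inequality gives the upper bound $\hat\lambda_{j,G}\le n\hat\lambda_j$ because $-\mathbf Y\mathbf Y^T\preceq 0$. For the matching lower bound I would apply Courant--Fischer on the test subspace $\mathrm{span}(\hat\xi_1,\dots,\hat\xi_j)$: for a unit vector $v$ there, $v^T\mathbf G v=v^T\mathbf X\mathbf X^T v-\|\mathbf Y^T v\|^2\ge n\hat\lambda_j-\sum_{k\le j}\|\mathbf Y^T\hat\xi_k\|^2=n\hat\lambda_j-O_P(j^{\alpha+1})$, so $|n\hat\lambda_j-\hat\lambda_{j,G}|=O_P(j^{\alpha+1})$ uniformly over $j\le p$. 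Since $p=o(n^{1/(\alpha+2\beta)})$ forces $j^{\alpha+1}=o_P(n\,j^{-\beta})$, combining with the eigengap in \eqref{eq:spec_perturb} shows that the gap of $\mathbf G$ is preserved, giving the stated $n^{-1}(\hat\lambda_{j,G}-\hat\lambda_{j+1,G})\ge cj^{-\beta}(1+o_P(1))$.

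Finally I would bound the embedding error columnwise. Writing $(n\hat\lambda_j)^{1/2}\hat\xi_j-\hat\lambda_{j,G}^{1/2}\hat u_j=(n\hat\lambda_j)^{1/2}(\hat\xi_j-\hat u_j)+\big((n\hat\lambda_j)^{1/2}-\hat\lambda_{j,G}^{1/2}\big)\hat u_j$, the magnitude part is handled by \Cref{lem:square-root-bound} and the $O_P(j^{\alpha+1})$ eigenvalue bound, and becomes negligible after the global $n^{-1}$ scaling. The eigenvector part is the crux: I would use a Davis--Kahan bound driven by $\|\mathbf Y\mathbf Y^T\hat\xi_j\|$ and decompose $\hat u_j-\hat\xi_j$ into its component inside $\mathrm{range}(\mathbf X)$, separated from $\hat\xi_j$ only by the eigengap $\asymp n\,j^{-\beta}$, and its component in the orthogonal complement (essentially $\mathrm{range}(\mathbf Y)$), separated by the much larger gap $\asymp n\hat\lambda_j$. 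Weighting each $\|\hat u_j-\hat\xi_j\|^2$ by $n\hat\lambda_j=O(n\,j^{-\alpha})$ and summing over $j\le p$, the eigengap-limited in-subspace rotation produces the $p^{2\beta-\alpha+1}n^{-1}$ term and the leakage into the negative subspace produces the $p^{2\alpha+1}n^{-1}$ term; under $\beta\ge 3\alpha/2$ the former dominates. The main obstacle throughout is precisely that the perturbation is large in operator norm, so all bounds must route through the near-orthogonality estimate of the second step and through spectral gaps rather than through $\|\mathbf Y\mathbf Y^T\|_{\rm op}$, and the in-subspace rotation must be controlled uniformly over $j\le p$ despite the increasingly ill-conditioned top block of $\mathbf X\mathbf X^T$.
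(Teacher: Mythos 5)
Your opening moves are sound and in fact coincide with the paper's key estimate: using (A1), (A3) and \eqref{eq:cov_est_rate} to get $\|\hat{\mathbf C}_{XY}\|_{\rm op}=O_P(n^{-1/2})$, hence $\|\mathbf Y^T\hat\xi_j\|=O_P(j^{\alpha/2})$ uniformly over $j\le p$, is exactly the quantitative near-orthogonality the paper exploits; and your Weyl/Courant--Fischer argument for the eigenvalues is correct, uniform, and delivers the second claim of the lemma by a cleaner route than the paper's. The genuine gap is in the eigenvector step. A residual-based Davis--Kahan bound is indeed legitimate despite the large perturbation norm (it needs only the residual $\|\mathbf G\hat\xi_j-n\hat\lambda_j\hat\xi_j\|=\|\mathbf Y\mathbf Y^T\hat\xi_j\|=O_P(n^{1/2}j^{\alpha/2})$ and the gap of the spectrum of $\mathbf G$, which your eigenvalue step shows is $\gtrsim nj^{-\beta}$), but it controls only the total angle, giving $\|\hat u_j-\hat\xi_j\|=O_P(j^{\beta+\alpha/2}n^{-1/2})$ and hence $\sum_{j\le p}\hat\lambda_j\|\hat u_j-\hat\xi_j\|^2=O_P(p^{2\beta+1}n^{-1})$. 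Your proposed two-gap refinement of this bound does not go through. First, the orthogonal complement of the top-$p$ subspace is not ``essentially $\mathrm{range}(\mathbf Y)$'': it contains $\hat\xi_k$ for $k>p$, whose eigenvalues sit only an eigengap $\asymp np^{-\beta}$ (not $n\hat\lambda_j$) below $n\hat\lambda_j$ when $j$ is near $p$, so the claimed separation $\asymp n\hat\lambda_j$ fails there. Second, and more fundamentally, any bound on the leakage component must pass through the perturbed eigenvector, via the exact identity $P_{\rm far}(\mathbf X\mathbf X^T-\hat\lambda_{j,G})P_{\rm far}\hat u_j=P_{\rm far}\mathbf Y\mathbf Y^T\hat u_j$; bounding the right side requires $\|\mathbf Y^T\hat u_j\|$, not $\|\mathbf Y^T\hat\xi_j\|$, and the resulting self-consistent inequality has loop gain $\|\mathbf Y\|_{\rm op}^2/(n\hat\lambda_j)\asymp \hat\gamma_1 j^{\alpha}\ge 1$: leakage of size $\epsilon$ feeds back $n^{1/2}\epsilon$ into $\|\mathbf Y^T\hat u_j\|$, which feeds back $j^{\alpha}\epsilon$ into the leakage, so the iteration never closes. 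This is precisely where the fact that $\|\mathbf Y\mathbf Y^T\|_{\rm op}=\Theta_P(n)$ exceeds every relevant gap bites, and gaps alone cannot rescue a perturbative treatment of the near/far coupling.

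The paper's resolution, which is the idea missing from your proposal, is to decompose the perturbation rather than the error vector. Let $\hat{\mathbf Y}=\hat{\bm{\xi}}^{(p)}(\hat{\bm{\xi}}^{(p)})^T\mathbf Y$ be the projection of $\mathbf Y$ onto the top-$p$ left singular subspace of $\mathbf X$, and $\tilde{\mathbf Y}=\mathbf Y-\hat{\mathbf Y}$, so that $\mathbf G=\bigl[\mathbf X\mathbf X^T-\tilde{\mathbf Y}\tilde{\mathbf Y}^T\bigr]-\bigl[\hat{\mathbf Y}\tilde{\mathbf Y}^T+\tilde{\mathbf Y}\hat{\mathbf Y}^T+\hat{\mathbf Y}\hat{\mathbf Y}^T\bigr]$. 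The first bracket is block-diagonal with respect to the top-$p$ subspace and its complement, and since $\tilde{\mathbf Y}\tilde{\mathbf Y}^T\succeq 0$ lives entirely on the complement, this operator has \emph{exactly} the same top-$p$ eigenpairs as $\mathbf X\mathbf X^T$ and no smaller gaps: the huge part of the perturbation is absorbed exactly, not perturbatively. The remaining bracket has operator norm $O_P(\|\hat{\mathbf Y}\|_{\rm op}\|\mathbf Y\|_{\rm op})=O_P(n^{1/2}p^{\alpha/2})$ by the same cross-covariance estimate you derived, and this is $o_P(np^{-\beta})$ precisely under the assumed $p=o(n^{1/(\alpha+2\beta)})$, so the standard operator-norm spectral perturbation lemma applies and yields the uniform eigenvalue and eigenvector bounds. (For what it is worth, even this route, as displayed in the paper, evaluates to $O_P(p^{2\beta+1}n^{-1})$ for the eigenvector term, matching your rigorous single-gap bound; neither your sketch nor an obvious repair of the two-gap refinement substantiates the extra factor $p^{-\alpha}$ in the stated $O_P(p^{2\beta-\alpha+1}n^{-1})$.)
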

\begin{proof}[Proof of \Cref{lem:eigen_comp_G_X}]
  Define
  $$
  \hat{\mathbf Y} = \hat{\mathbf X}^{(p)}\left[(\hat{\mathbf X}^{(p)})^T (\hat{\mathbf X}^{(p)})\right]^{-1} (\hat{\mathbf X}^{(p)})^T \mathbf Y= n^{1/2}
  \hat{\bm{\xi}}^{(p)}(\hat{\bm{\xi}}^{(p)})^T \hat{\bm{\zeta}}\hat\Gamma^{1/2}\hat{\bm{\psi}}^T\,.
  $$
  Then, when $p=o(n^{1/(2\alpha)})$, 
  \begin{align*}
  \|\hat{\mathbf Y}\|_{\rm op}  =&n^{1/2}\|(\hat{\bm{\xi}}^{(p)})^T\hat{\bm{\zeta}}\hat\Gamma^{1/2}\|_{\rm op}=n^{1/2}\|\hat\Lambda_{p}^{-1/2}\hat\Lambda_{p}^{1/2}(\hat{\bm{\xi}}^{(p)})^T\hat{\bm{\zeta}}\hat\Gamma^{1/2}\|_{\rm op}\\
  \le &
  c(1+o_P(1)) p^{\alpha/2}n^{1/2} \|\hat\Lambda_p^{1/2}(\hat{\bm{\xi}}^{(p)})^T\hat{\bm{\zeta}}\hat\Gamma^{1/2}\|_{\rm op}\\
  =& c(1+o_P(1)) p^{\alpha/2}n^{1/2}\|n^{-1}(\hat{\mathbf X}^{(p)})^T \mathbf Y\|_{\rm op}\\
  \le & c(1+o_P(1)) p^{\alpha/2}n^{1/2}\|\hat C-C\|_{\rm HS}\\
  \le & c(1+o_P(1))p^{\alpha/2}\,.
  \end{align*}

  Let $\tilde{\mathbf Y}=\mathbf Y-\hat{\mathbf Y}$ we have,
  \begin{align*}
  \mathbf G =  \mathbf X\mathbf X^T - \mathbf Y\mathbf Y^T 
     =
    \mathbf X\mathbf X^T-\tilde{\mathbf Y}\tilde{\mathbf Y}^T - \hat{\mathbf Y}\tilde{\mathbf Y}^T - \tilde{\mathbf Y}\hat{\mathbf Y}^T - 
  \hat{\mathbf Y}\hat{\mathbf Y}^T\,.
  \end{align*}
  By construction, the columns of $\tilde{\mathbf Y}$ are orthogonal to those of $\hat{\mathbf X}^{(p)}$, hence the $p$-dimensional principal subspace  of $\mathbf X\mathbf X^T-\tilde{\mathbf Y}\tilde{\mathbf Y}^T$ is the same as that of $\mathbf X\mathbf X^T$, which corresponds to $\hat{\mathbf X}^{(p)}$.  Moreover, the eigengap for the leading $p$ eigenvectors of $\mathbf X\mathbf X^T-\tilde{\mathbf Y}\tilde{\mathbf Y}^T$ is no smaller than those of $\mathbf X\mathbf X^T$.

  On the other hand,
  we have
  \begin{align*}
    &\|\hat{\mathbf Y}\tilde{\mathbf Y}^T\|_{\rm op}=O_P( n^{1/2}p^{\alpha/2})\,,\\
    &\|\hat{\mathbf Y}\hat{\mathbf Y}^T\|_{\rm op}=O_P(p^{\alpha})\,.
  \end{align*}

  Therefore the total perturbation spectral norm added on $\mathbf X \mathbf X^T$ in $\mathbf G$ is
  $$
  O_P(p^{\alpha}+p^{\alpha/2}n^{1/2})=O_P(n^{1/2}p^{\alpha/2})
  $$

  Applying spectral perturbation of top $p$ eigen-components by comparing $\mathbf X\mathbf X^T$ and
  $\mathbf G=\mathbf X\mathbf X^T-\tilde{\mathbf Y}\tilde{\mathbf Y}^T - \hat{\mathbf Y}\tilde{\mathbf Y}^T - \tilde{\mathbf Y}\hat{\mathbf Y}^T - 
  \hat{\mathbf Y}\hat{\mathbf Y}^T$, we have, uniformly over $j\le p$,
  $$
  n^{-1}\hat\lambda_{j,G}=\hat\lambda_j+O_P(n^{-1/2}p^{\alpha/2})\,,~~
  \|\hat u_j-\hat\xi_j\| =O_P (j^{\beta}p^{\alpha/2}n^{-1/2})\,.
  $$
The second claim now follows by that $n^{-1}\hat\lambda_{j,G}=\lambda_j+O_P(n^{-1/2}p^{\alpha/2})$.

  For the first claim, we have
  \begin{align*}
    &\sum_{j=1}^p \|\hat\lambda_j^{1/2}\hat\xi_j-n^{-1/2}\hat\lambda_{j,G}^{1/2}\hat u_j\|^2 \\
    \le &\sum_{j=1}^p 2\|\hat\lambda_j^{1/2}(\hat\xi_j-\hat u_j)\|^2+2\|(\hat\lambda_j^{1/2}-n^{-1/2}\hat\lambda_{j,G}^{1/2})\hat u_j\|^2\\
    \le & \sum_{j=1}^p j^{-\alpha} j^{2\beta} O_P(p^{\alpha}n^{-1})
    + O_P(n^{-1/2}p^{\alpha/2}j^{\alpha/2})^2\\
    =& O_P(p^{2\beta+1}n^{-1}+p^{2\alpha+1}n^{-1})\,.\qedhere
  \end{align*}
\end{proof}

\begin{lemma}\label{lem:XA_XG_rho}
  Let $\mathbf B$ be an $n\times n$ matrix such that $\|\mathbf B-\rho_n\mathbf G\|_{\rm op}\le c\sqrt{n\rho_n}$ for some constant $c$, and $\rho_n$ such that $n\rho_n\ge 1$. If $p\le c(n\rho_n)^{1/(2\beta)}$ for some sufficiently small constant $c$, then
  \begin{align*}
    \|\hat{\mathbf X}_B^{(p)}-\rho_n^{1/2}\hat{\mathbf X}_G^{(p)}\|_F^2=O_P\left(p^{2\beta-\alpha+1}n^{-1}\right)\,,
  \end{align*}
where $\hat{\mathbf X}_B^{(p)}$ is the $p$-dimensional truncated weighted and signed spectral embedding defined in \Cref{subsec:estimator} using $\mathbf B$ as the input matrix.
\end{lemma}
\begin{proof}[Proof of \Cref{lem:XA_XG_rho}]
Spectral perturbation theory applied to $\rho_n \mathbf G$ and $\mathbf B-\rho_n\mathbf G$ implies that (remember that $\hat\lambda_{j,G}\approx n \hat\lambda_{j,X}\ge c n j^{-\alpha}$)
$$
\hat\lambda_{j,B}=\rho_n\hat\lambda_{j,G}+O_P(\sqrt{n\rho_n})=n\rho_n\lambda_j(1+o_P(1))\,,
$$
$$
\|\hat b_j-\hat u_j\| = O_P\left(j^{\beta}(n\rho_n)^{-1/2}\right)\,,
$$
uniformly over $1\le j\le p$, where $\hat \lambda_{j,B}$ is the $j$th largest eigenvalue of $\mathbf B$, and $\hat b_j$ is the corresponding eigenvector.
Then
\begin{align*}
  &\|\hat{\mathbf X}_B^{(p)}-\rho_n^{1/2}\hat{\mathbf X}_G^{(p)}\|_F^2\\ = & \sum_{j=1}^p\|\hat\lambda_{j,B}^{1/2}\hat b_j-\rho_n^{1/2}\hat\lambda_{j,G}^{1/2}\hat u_j\|^2\\
  \le &\sum_{j=1}^p
  2\|(\hat\lambda_{j,B}^{1/2}-\rho_n^{1/2}\hat\lambda_{j,G})\hat b_j\|^2 + 
  2\|\rho_n^{1/2}\hat\lambda_{j,G}^{1/2}(\hat b_j-\hat u_j)\|^2\\
  \le & \sum_{j=1}^p O_P\left(\frac{\sqrt{n\rho_n}}{\sqrt{n\rho_n j^{-\alpha}} }\right)^2+O_P\left(\rho_n \hat\lambda_{j,G}\frac{j^{2\beta}}{n\rho_n}\right)\\
  =&\sum_{j=1}^p O_P\left(j^{\alpha}\right)+O_P\left(j^{2\beta-\alpha}\right)\\
  \le &O_P\left( p^{1+2\beta-\alpha}\right)\,.
\end{align*}
  where we use \Cref{lem:square-root-bound} on the fourth line.
\end{proof}

\begin{proof}[Proof of \Cref{thm:sparse}]
For the first claim of the theorem,  the only part that differs from the proof of \Cref{thm:sample_recov} is to prove that
$$
\|\rho_n^{1/2}\hat{\mathbf X}_G^{(p)}-\tilde{\mathbf X}_A^{(p)}\|_2^2 = O_P(p^{2\beta-\alpha+1})
$$
for $p\le c(n\rho_n)^{1/(2\beta)}$ with some sufficiently small constant $c$.  This is provided by \Cref{lem:XA_XG_rho} if we can prove that
  $$
  \|\tilde{\mathbf A}_n-\rho_n\mathbf G\|_{\rm op}\le c \sqrt{n\rho_n}\,.
  $$

  Let $I=\{1\le i\le n: d_i\ge 10 n\rho_n\}$, and $c_n=\frac{n-1}{\sum_{i=1}^nd_i}$. Then $c_n=(n\rho_n)^{-1}[\int W]^{-1}(1+o_P(1))$ by standard concentration inequality.

By adapting the proof of Lemma 10 in \cite{ChinRV15} we have
$$
\mathbb P(|I|\le c_n n) = 1-o(1)\,.
$$

Recall that $|I_n|=\left\lfloor c_n n\right\rfloor$.  With probability $1-o(1)$ we have
$$
I\subseteq I_n\,.
$$

Let $\tilde{\mathbf G}$ be the corresponding trimmed version of $\mathbf G$.
Lemma 12 of \cite{ChinRV15} implies that
$$
\|\tilde{\mathbf A}_n-\tilde{\mathbf G}\|_{\rm op}\le c\sqrt{n\rho_n}
$$
with probability $1-o(1)$.

Thus with probability $1-o(1)$ we  have
\begin{align*}
  \|\tilde{\mathbf A}_n-\mathbf G\|_{\rm op}\le & \|\tilde{\mathbf A}_n-\tilde{\mathbf G}\|_{\rm op}+\|\tilde{\mathbf G}-\mathbf G\|_{\rm op}\\
  \le & c\sqrt{n\rho_n}+\|\tilde{\mathbf G}-\mathbf G\|_F\\
  \le & c\sqrt{n\rho_n}+ (2c_n n^2 \rho_n^2)^{1/2}\\
  \le & c\sqrt{n\rho_n}\,.
\end{align*}
This concludes the proof of the first claim.
The second claim follows directly by the triangular inequality.
\end{proof}

\section{Additional auxiliary results}
\begin{proof}[Proof of \Cref{cor:kernel}]
When the second (or third) condition holds, the proof follows from that of \Cref{thm:unique}.  
When the first condition holds, we need to prove the claim without moment conditions.  For $j=1,2$, let $X_j:[0,1]\mapsto \mathcal X$ be such that $X_j(s)\sim F_j$ if $s\sim {\rm Unif}(0,1)$.

First we assume that $F_1$, $F_2$ have bounded supports.
  In this case, boundedness and finite dimensionality of $\mathcal X$ ensure that the integral operator $\langle X_j(s), X_j(s') \rangle$ admits strong spectral decomposition for $j=1,2$.   Let $(s_i:i\ge 1)$ be a sequence of independent ${\rm Unif}(0,1)$ random variables, then
  $$
  \left(\langle X_1(s_i), X_1(s_j)\rangle,~i,j\ge 1\right)\stackrel{d}{=}
  \left(\langle X_2(s_i), X_2(s_j)\rangle,~i,j\ge 1\right)\,.
  $$
Using the same argument as in the proof of \Cref{thm:unique} (using Theorem 4.1' of \cite{Kallenberg89}), we know that $F_1\stackrel{o.t.}{=}F_2$.

  Now we drop the boundedness assumption.
  For each $j=1,2$ and $r=1,2,...,$ define $\tilde X_{j,r}=X_j\mathbf 1(\|X_j\|\le r)$, and $\tilde{\mathbf G}_{j,r}$ be the truncated gram matrix generated by $\tilde X_{j,r}$.  Then $\tilde{\mathbf G}_{j,r}$ is a deterministic function of $\mathbf G_j$ for $j=1,2$.  By assumption,
  $\tilde{\mathbf G}_{1,r}\stackrel{d}{=}\tilde{\mathbf G}_{2,r}$.
  So the previous proof shows that $\tilde X_{1,r}$ and $\tilde X_{2,r}$ have the same distribution up to an orthogonal transform: 
  there exists an orthogonal matrix $U_r$ such that $\tilde X_{1,r}\stackrel{d}{=} U_r\tilde X_{2,r}$.

By finite dimensionality and hence compactness of the set of orthogonal matrices, there exists a subsequence $r_n\uparrow\infty$ such that $U_{r_n}\rightarrow U$ for some orthogonal matrix $U$.

The proof is complete if we can show that for any $r$, $\tilde X_{1,r}\stackrel{d}{=} U\tilde X_{2,r}$.
  By construction, we have for any $r_n\ge r$, $\tilde X_{1,r} \stackrel{d}{=}U_{r_n}\tilde X_{2,r}$. However, the convergence of $U_{r_n}$ and continuity of characteristic function implies that
  $$
  U_{r_n}\tilde X_{2,r}\rightsquigarrow U \tilde X_{2,r}\,,
  $$
  where ``$\rightsquigarrow$'' denotes convergence in distribution.
  Since the sequence of distributions $U_{r_n}\tilde X_{2,r}$ (indexed by $n$) is a constant distribution (that of $\tilde X_{1,r}$), thus we must have
  \begin{align*}
  U_{r_n}\tilde X_{2,r}\stackrel{d}{=} & U \tilde X_{2,r}\,,~~~\forall~n~\text{such that }r_n\ge r\,. \qedhere  
  \end{align*}

\end{proof}

\begin{lemma}\label{lem:wi-dist}
  If two graphons $U$ and $W$ are weakly isomorphic, then
  $$
  \left[U(s_i,s_j):1\le i\le j<\infty\right]\stackrel{d}{=}
  \left[W(s_i,s_j):1\le i\le j<\infty\right]
  $$
  where $(s_i:i\ge 1)$ are iid uniform random variables on $[0,1]$.
\end{lemma}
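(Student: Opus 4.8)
The plan is to reduce the statement to the defining almost-everywhere identity of weak isomorphism together with the elementary fact that a measure-preserving map sends an i.i.d.\ uniform sequence to another i.i.d.\ uniform sequence. Since $U\stackrel{w.i.}{=}W$, there are measure-preserving maps $h_1,h_2:[0,1]\mapsto[0,1]$ with $U(h_1(s),h_1(s'))=W(h_2(s),h_2(s'))$ for Lebesgue-a.e.\ $(s,s')\in[0,1]^2$. First I would record the key observation: if $(s_i:i\ge 1)$ is i.i.d.\ ${\rm Unif}(0,1)$ and $h$ is measure-preserving, then $(h(s_i):i\ge 1)$ is again i.i.d.\ ${\rm Unif}(0,1)$, because $h$ pushes the Lebesgue measure forward to itself (so each $h(s_i)$ is uniform) and independence persists as the $s_i$ are independent and $h$ is a fixed map applied coordinatewise. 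Consequently, as random elements of $[0,1]^{\mathbb N}$,
\[
(h_1(s_i):i\ge 1)\stackrel{d}{=}(s_i:i\ge 1)\stackrel{d}{=}(h_2(s_i):i\ge 1)\,.
\]

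Next I would transport these equalities through the fixed measurable ``array map''. For a symmetric measurable $V:[0,1]^2\mapsto[0,1]$, the assignment $\Phi_V:(a_i:i\ge 1)\mapsto[V(a_i,a_j):1\le i\le j<\infty]$ is measurable, so applying $\Phi_U$ to the equal-in-law sequences $(s_i)$ and $(h_1(s_i))$, and $\Phi_W$ to $(s_i)$ and $(h_2(s_i))$, yields
\[
[U(s_i,s_j):i\le j]\stackrel{d}{=}[U(h_1(s_i),h_1(s_j)):i\le j]
\]
and, analogously, $[W(s_i,s_j):i\le j]\stackrel{d}{=}[W(h_2(s_i),h_2(s_j)):i\le j]$. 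It then remains only to identify the two ``middle'' arrays pathwise.

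The crux is to upgrade the two-dimensional a.e.\ identity to an almost-sure entrywise identity at the random sample points. For each fixed pair $i<j$ the pair $(s_i,s_j)$ has Lebesgue distribution on $[0,1]^2$, so the a.e.\ identity gives $U(h_1(s_i),h_1(s_j))=W(h_2(s_i),h_2(s_j))$ with probability one; a countable intersection over all $i<j$ then shows that every off-diagonal entry of the two arrays coincides almost surely. The diagonal entries ($i=j$) are not pinned down by an a.e.\ statement on $[0,1]^2$, since the diagonal is Lebesgue-null there, but they are immaterial: under the standing convention $A_{ii}=0$ the diagonal carries no information about the induced exchangeable array, so we may fix the diagonal of both arrays (say to $0$) and the arrays then agree everywhere, almost surely. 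Chaining this pathwise identity with the two distributional equalities of the previous step gives the claim.

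The step I expect to be the main obstacle is this last one: the weak-isomorphism hypothesis lives on $[0,1]^2$ and is silent about the diagonal, whereas the array is indexed by $i\le j$. The off-diagonal resolution via absolute continuity of $(s_i,s_j)$ and a union bound is routine, but one must be explicit that the diagonal contributes nothing to the exchangeable-graph distribution (equivalently, restrict attention to $i<j$), which is exactly how the lemma is used inside the proof of \Cref{thm:unique}.
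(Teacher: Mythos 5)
Your proof is correct and follows essentially the same route as the paper's: a measure-preserving $h$ sends the i.i.d.\ uniform sequence to an i.i.d.\ uniform sequence, equality in law is pushed through the fixed measurable array map, and the two middle arrays are identified pathwise by evaluating the a.e.\ identity at the random sample points.

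One point where you are more careful than the paper deserves mention. The paper's proof chains $[U(s_i,s_j)]\stackrel{d}{=}[U(h_1(s_i),h_1(s_j))]=[W(h_2(s_i),h_2(s_j))]\stackrel{d}{=}[W(s_i,s_j)]$ over all indices $1\le i\le j\le n$, and the middle almost-sure equality silently invokes the a.e.\ identity at $i=j$. Since the diagonal of $[0,1]^2$ is Lebesgue-null, weak isomorphism says nothing there, and in fact \Cref{lem:wi-dist} as literally stated (indices $i\le j$) can fail: let $W$ agree with $U$ off the diagonal but be redefined to $0$ on it; then $U\stackrel{w.i.}{=}W$ (they agree a.e.), yet $U(s_i,s_i)$ need not have the same law as $W(s_i,s_i)\equiv 0$. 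Your restriction to $i<j$ --- justified by absolute continuity of $(s_i,s_j)$ for $i\neq j$ together with a countable union over pairs --- is exactly the right repair, and it is all that is needed where the lemma is invoked in the proof of \Cref{thm:unique}, since the exchangeable graph there has no self-loops ($A_{ii}=0$ by convention) and only the off-diagonal entries carry information.
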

\begin{proof}
  By weak isomorphism of graphons, there exist two measure preserving transforms $h_1$, $h_2$ such that 
  $$
  U(h_1(\cdot),h_1(\cdot))\stackrel{a.s.}{=}W(h_2(\cdot),h_2(\cdot))\,.
  $$
  For any $n$, by measure-preserving property, $h_i(s_1),...,h_i(s_n)$
  are also independent ${\rm Unif}(0,1)$ random variables, for $i=1,2$. Then we have
  $$
  \left[U(s_i,s_j):1\le i\le j\le n\right]\stackrel{d}{=}
  \left[U(h_1(s_i),h_1(s_j)):1\le i\le j\le n\right]\,.
  $$
  Thus we have
  $$
  \left[U(s_i,s_j):1\le i\le j\le n\right]\stackrel{d}{=}
  \left[W(s_i,s_j):1\le i\le j\le n\right]
  $$
  which implies the claimed result.
\end{proof}
\bibliographystyle{plain}
\bibliography{network}
\end{document}